\documentclass[preprint,pdftex,english,11pt]{imsart}
\usepackage{amsthm,amsmath,amssymb,amsfonts,epic,multirow,ulem}

\usepackage{thmtools}

\usepackage{tgbonum}

\usepackage[numbers]{natbib}
\RequirePackage{natbib}

\usepackage{wrapfig}
\usepackage[english]{babel}
 \usepackage[T1]{fontenc} 

 \usepackage{makeidx}
 \usepackage{fancyhdr}
\usepackage{epsf}
\usepackage[dvips]{epsfig}  
\usepackage{subfig}
 \usepackage{graphicx}
\usepackage[nice]{nicefrac}
\usepackage{dsfont}
\usepackage{calrsfs} 

\usepackage{pifont}
\usepackage{bm}

\usepackage{wasysym}
\usepackage{stmaryrd}
\usepackage{aeguill}
\usepackage{mathrsfs}
\usepackage{enumerate}
\usepackage[shortlabels]{enumitem}
\usepackage{enumitem}
\usepackage[dvipsnames,table,xcdraw]{xcolor}
\usepackage{cancel}
\usepackage{ulem}
\usepackage{caption}
\usepackage{tikz}

\usepackage{hyperref}
\definecolor{linkcolour}{rgb}{0,0.2,0.6}
\hypersetup{colorlinks,breaklinks,urlcolor=linkcolour, linkcolor=linkcolour, citecolor=linkcolour}

\usepackage{geometry}

\geometry{hmargin=2cm,vmargin=4cm,headsep=1cm}

\linespread{1.2}

\startlocaldefs


\newtheoremstyle{mytheoremstyle} 
        {\topsep}                    
        {\topsep}                    
        {\itshape\fontfamily{ppl}\selectfont}                   
        {}                           
        {\fontfamily{ppl}\selectfont\bfseries\color{black}}                   
        {.}                          
        {.5em}                       
        {}  
\theoremstyle{mytheoremstyle}

\newtheorem{theo}{Theorem}[section]

\newtheorem{lemm}[theo]{Lemma}

\newtheorem{Fact}[theo]{Fact}

\makeatletter
\renewenvironment{proof}[1][\proofname]{\par
  \pushQED{\qed}%
  \fontfamily{ppl} \topsep6\p@\@plus6\p@\relax
  \trivlist
  \item[\hskip\labelsep\itshape\bfseries#1\@addpunct{.}]\ignorespaces}{%
  \popQED\endtrivlist\@endpefalse
}


\def\R{\mathbb{R}}
\def \N{\mathbb{N}}


\def \P{\mathbb{P}} 

\def \E{\mathbb{E}} 

\newcommand{ \un }{\mathds{1}}


\def\T{\mathbb{T}}

\newenvironment{merci}{\textbf{Acknowledgments.}}{ }
\endlocaldefs




\newtheorem{Ass}{Assumption}
\newtheorem{remark}{Remark}




\renewcommand{\T}{\mathbb{T}}
\newcommand{\X}{\mathbb{X}}





\def \Eb{\mathbf{E}}
\def \Pb{\mathbf{P}}








\renewcommand{\P}{\mathbb{P}}








%

\newtheorem{postita}{Post-it}


\renewcommand{\P}{\mathbb{P}}

\def\T{\mathbb{T}}

\makeatletter
\newcommand*\bigcdot{\mathpalette\bigcdot@{.5}}
\newcommand*\bigcdot@[2]{\mathbin{\vcenter{\hbox{\scalebox{#2}{$\m@th#1\bullet$}}}}}
\makeatother

\newcommand{\VectCoord}[2]{#1^{(#2)}}

\newcommand{\sqrtBis}[1]{#1^{1/2}}

\newcommand{\Pbis}{\mathds{P}}
\newcommand{\Ebis}{\mathds{E}}

\newcommand{\tn}{\lfloor tn\rfloor}

\newcommand{\Mn}{\lfloor Mn\rfloor}
\newcommand{\an}{\lfloor an\rfloor}
\newcommand{\bn}{\lfloor bn\rfloor}
\newcommand{\PEsqrt}{\lfloor\sqrtBis{n}\rfloor}

\newcommand{\CardRoots}{B^{(n)}}

\DeclareMathAlphabet\mathbfcal{OMS}{cmsy}{b}{n}

\begin{document}

{\fontfamily{ppl}\selectfont

\begin{frontmatter}


\title{Genealogy in critical generations of a diffusive random walk in random environment on trees}

\author{\fnms{Alexis} \snm{Kagan}\ead[label=e2]{alexis.kagan@auckland.ac.nz}}
\address{Department of Statistics, University of Auckland, New Zealand. \printead{e2}} \vspace{0.5cm}

\runauthor{Kagan}


\runtitle{Genealogy in critical generations of a diffusive random walk in random environment on trees}

\begin{abstract}
We consider the range $\mathcal{R}^{(n)}$, the tree made up of visited vertices by a diffusive null-recurrent randomly biased walk $\X$ on a Galton-Watson tree $\T$ up to the $n$-th return time to its root and we consider the following genealogy problem: pick two vertices uniformly at random in a generation of order $n$ in the tree $\mathcal{R}^{(n)}$. Where does the coalescence occur? it turns out that the coalescence happens either in the recent past or in the remote past.

\end{abstract}

 \begin{keyword}[class=AenMS]
 \kwd[MSC2020 :  ] {60K37}, {60J80}
 \end{keyword}

\begin{keyword}
\kwd{randomly biased random walks}
\kwd{branching random walks}
\kwd{Galton-Watson trees}
\kwd{genealogy}
\kwd{coalescence}
\end{keyword}

\end{frontmatter}


\section{Introduction}

\subsection{Randomly biased random walk on a Galton-Watson tree}\label{RBRWT}

Under a probability measure $\Pb$, we consider a Galton-Watson tree $\T$ rooted at $e$ with offspring distribution $N$. We assume $\Eb[N]>1$ so that $\T$ is a super-critical Galton-Watson that is $\Pb(\textrm{non-extinction of }\T)>0$ and we define $\Pb^*(\cdot):=\Pb(\cdot|\textrm{non-extinction of }\T)$, where $\Eb$ (resp. $\Eb^*$) denotes the expectation with respect to $\Pb$ (resp. $\Pb^*$). \\
For any vertex $x\in\T$, we denote by $|x|$ the generation of $x$, by $x_i$ its ancestor in generation $i\in\{0,\ldots,|x|\}$ and $x^*:=x_{|x|-1}$ stands for the parent of $x$. In particular, $x_0=e$ and $x_{|x|}=x$. For convenience, we add a parent $e^*$ to the root $e$. For any $x,y\in\T$, we write $x\leq y$ if $x$ is an ancestor of $y$ ($y$ is said to be a descendent of $x$) and $x<y$ if $x\leq y$ and $x\not=y$. We then write $\llbracket x_i,x\rrbracket:=\{x_j; j\in\{i,\ldots,|x|\}\}$. Finally, for any $x,y\in\T$, we denote by $x\land y$ the most recent common ancestor of $x$ and $y$, that is the ancestor $u$ of $x$ and $y$ such that $\max\{|z|;\; z\in\llbracket e,x\rrbracket\cap\llbracket e,y\rrbracket\}=|u|$. In the present paper, we restrict ourselves to the case $\#\{x\in\T;\; |x|=m\}<\infty$ for any integer $m\geq 0$. \\
We also consider the branching random random walk $\mathcal{E}:=(\T,(V(x);x\in\T))$ on the real line with the convention $(V(x);\; |x|=0)=\{V(e)\}=\{0\}$, that is a $\bigcup_{k\in\N}\R^k$-valued random variable ($\mathbb{R}^0$ only contains the sequence with length $0$) such that for any integers $0\leq m\leq n$, for any $|u|=m$ and $|x|=n$, if $V_u(x):=V(x)-V(u)$, then given $\mathcal{F}_m:=\sigma(\{z\in\T;\; |z|\leq m, V(z)\})$, the collection $\{(V_u(x);\; |x|=n,\; x\geq u);\; |u|=m\}$ is made up of\textrm{ i.i.d }copies of $(V(x);\; |x|=n-m)$, independent of $\mathcal{F}_m$. We will refer to $\mathcal{E}$ as the random environment. \\
Given a realization of the random environment $\mathcal{E}$, we define a $\T\cup\{e^*\}$-valued nearest-neighbour random walk $\X:=(X_j)_{j\in\N}$, reflected in $e^*$ whose transition probabilities are, under the quenched probabilities $\{\P^{\mathcal{E}}_z; z\in\T\cup\{e^*\}\}$: for any $x\in\T$, $\P^{\mathcal{E}}_x(X_0=x)=1$ and 
\begin{align*}
    p^{\mathcal{E}}(x,x^*)=\frac{e^{-V(x)}}{e^{-V(x)}+\sum_{y;\;y^*=x}e^{-V(y)}}\;\;\textrm{ and for any child }z\textrm{ of }x,\;\; p^{\mathcal{E}}(x,z)=\frac{e^{-V(z)}}{e^{-V(x)}+\sum_{y;\;y^*=x}e^{-V(y)}}.
\end{align*}
Otherwise, $p^{\mathcal{E}}(x,z)=0$ and $p^{\mathcal{E}}(e^*,e)=1$. Let $\P^{\mathcal{E}}:=\P^{\mathcal{E}}_e$, that is the quenched probability of $\X$ starting from the root $e$ and we finally define the following annealed probabilities
\begin{align*}
    \P(\cdot):=\Eb[\P^{\mathcal{E}}(\cdot)]\;\;\textrm{ and }\;\;\P^*(\cdot):=\Eb^*[\P^{\mathcal{E}}(\cdot)].
\end{align*} 
R. Lyons and R. Pemantle \cite{LyonPema} initiated the study of the randomly biased random walk $\X$. $V(x)=|x|\log\lambda$ for a some constant $\lambda>0$, the walk $\X$ is known as the $\lambda$-biased random walk on $\T\cup\{e^*\}$ and was first introduced by R. Lyons (see \cite{Lyons} and \cite{Lyons2}). The $\lambda$-biased random walk is transient (on the set of non-extinction) unless the bias is strong enough: if $\lambda\geq\Eb[N]$ then, $\Pb$-almost surely, $\X$ is recurrent (positive recurrent if $\lambda>\Eb[N]$). R. Lyons, R. Pemantle and Y. Peres (see \cite{LyonsRussellPemantle1} and \cite{LyonsRussellPemantle2}), later joined by G. Ben Arous, A. Fribergh, N. Gantert, A. Hammond \cite{BA_F_G_H} and E. Aïdékon \cite{AidekonSpeed} for example, studied the transient case and payed a particular attention to the speed $v_{\lambda}:=\lim_{n\to\infty}|X_n|/n\in[0,\infty)$ of the random walk.\\
When the bias is random, the behavior of $\X$ depends on the fluctuations of the following $\log$-Laplace transform: for any $t\geq 0$ 
\begin{align*}
    \psi(t):=\log\Eb\Big[\sum_{|x|=1}e^{-tV(x)}\Big], 
\end{align*}
and we assume that $\psi$ is finite is a neighborhood of $1$ and that $\psi'(1)$ exists. As stated by R. Lyons and R. Pemantle \cite{LyonPema}, if $\inf_{t\in[0,1]}\psi(t)$ is positive, then $\Pb^*$-almost surely, $\X$ is transient and we refer to the work of E. Aïdékon \cite{Aidekon2008} for this case. Otherwise, it is recurrent. More specifically, G. Faraud \cite{Faraud} proved that the random walk $\X$ is $\Pb$-almost surely positive recurrent either if $\inf_{t\in[0,1]}\psi(t)<0$ or if $\inf_{t\in[0,1]}\psi(t)=0$ and $\psi'(1)>0$. It is null recurrent if $\inf_{t\in[0,1]}\psi(t)=0$ and $\psi'(1)\leq 0$. In the null-recurrent case for which we will be interesting in, it is now very well known that the behavior of the random walk $\X$ differs drastically whether $\psi'(1)=0$ or not. Indeed, when $\inf_{t\in[0,1]}\psi(t)=0$ and $\psi'(1)=0$, also referred to as the boundary case for the underlying random environment $\mathcal{E}$, the largest generation reached by the walk $\X$ up to the time $n$ is of order $(\log n)^3$ (see \cite{HuShi10a} and \cite{HuShi10b}), but surprisingly, the right order for $|X_n|$ is $(\log n)^2$ (see \cite{AndDeb2} and \cite{HuShi15} for instance). When $\inf_{t\in[0,1]}\psi(t)=0$ and $\psi'(1)<0$ however, the random walk reached larger generations. Define
\begin{align}\label{DefKappa}
    \kappa:=\inf\{t>1;\; \psi(t)=0\},
\end{align}
and assume $\kappa\in(1,\infty)$. It as been proved that the sequences $((|X_{\tn}|/n^{1-1/(\kappa\land 2)};\; t\geq 0)_{n\geq 1}$ if $\kappa\not=2$ and $((|X_{\tn}|(\log n)^{1/2}/n^{1/2};\; t\geq 0))_{n\geq 1}$ if $\kappa=2$ both converge to an explicit and non-degenerate process, see \cite{AidRap} and \cite{deRaph1}. Therefore, the random walk is said to be sub-diffusive when $\kappa\in(1,2]$ and diffusive when $\kappa>2$. The present paper is dedicated to the latter case, that is

\begin{align}\label{DiffCase}
    \inf_{t\in[0,1]}\psi(t)=\psi(1)=0,\;\;\psi'(1)<0\;\;\textrm{ and }\;\;\kappa>2.
\end{align}
\noindent We require
\begin{Ass}\label{Assumption1}
there exists $\delta_1>0$ such that $\psi(t)<\infty$ for all $t\in[1-\delta_1,2+\delta_1]$ and 
\begin{align*}
    \Eb\Big[\Big(\sum_{|x|=1}e^{-V(x)}\Big)^{2+\delta_1}\Big]<\infty. 
\end{align*}
\end{Ass}
\noindent and 
\begin{Ass}\label{Assumption2}
    the distribution of the $\bigcup_{k\in\N}\R^k$-valued random variable $(V(x);\; |x|=1)$ is non-lattice.
\end{Ass}

\subsection{The range of the random walk}

\noindent Let $\tau^0=0$ and for any $j\geq 1$
\begin{align*}
    \tau^j:=\inf\{k>\tau^{j-1};\; X_{k-1}=e^*,\; X_k=e\},
\end{align*}
with the convention $\inf\varnothing=+\infty$. It is known (see \cite{AndDeb1}, \cite{Hu2017} and more recently Theorem 1.2 in \cite{AK23LocalTimes}) that under the Assumptions \ref{Assumption1} and \ref{Assumption2}, $\Pb^*$-almost surely, in law under $\P^{\mathcal{E}}$, the sequence of random variables $(\tau^n/n)_{n\geq 1}$ converges to $\mathfrak{c}(W_{\infty})^2\bm{\tau}_{-1}$ for some explicit constant $\mathfrak{c}>0$, where $\bm{\tau}_{-1}$ stands for the first hitting time of $-1$ by a standard Brownian motion and $W_{\infty}$ is the limit of the additive martingale $(\sum_{|x|=k}e^{-V(x)})_{k\geq 0}$. It is also known that still under the Assumptions \ref{Assumption1} and \ref{Assumption2}, $\Pb(W_{\infty}>0)>0$, see \cite{Biggins1977}, \cite{Lyons1997}, \cite{Liu1} or \cite{Alsmeyer_Iksanov} for instance. Moreover, it is claimed in \cite{Biggins1977} that $\Pb$-almost surely, the event $\{W_{\infty}>0\}$ coincides with the event of non extinction of the underlying Galton-Watson tree $\T$. In particular, $\Pb^*(W_{\infty}>0)=1$.

\vspace{0.2cm}

\noindent For any $x\in\T$ and $p\geq 1$, define the edge local time $\VectCoord{N}{p}_x$ by
\begin{align*}
    \VectCoord{N}{p}_x:=\sum_{j=1}^{\tau^p}\un_{\{X_{j-1}=x^*,\; X_j=x\}},
\end{align*}
the number of times the oriented edge $(x^*,x)$ has been visited by the random walk $\X$ up to $\tau^p$. Let us also introduce the range $\VectCoord{\mathcal{R}}{p}$ of $\X$
\begin{align*}
    \VectCoord{\mathcal{R}}{p}:=\big\{x\in\T;\; \VectCoord{N}{p}_x\geq 1\big\},
\end{align*}
the sub-tree of $\T$ made up of the vertices visited by the random walk $\X$ up to time $\tau^p$.
\begin{Fact}[Lemma 3.1, \cite{AidRap}]\label{FactGWMulti}
Under $\P$, for any $p\in\N^*$, $(\VectCoord{\mathcal{R}}{p},(\VectCoord{N}{p}_x;\; x\in\VectCoord{\mathcal{R}}{p}))$ is a multi-type Galton-Watson tree with initial type equal to $p$. Moreover, we have the following characterization: for any $x\in\T$, $x\not=e$, any $k_1,\ldots,k_{N(x)}\in\N$ and $k\geq 1$, on the event $\{N^{(1)}_x=k\}$
\begin{align*}
    \P^{\mathcal{E}}\Big(\bigcap_{i=1}^{N(x)}\{N_{x^i}^{(1)}=k_i\}\big|N_z^{(1)};\; z\leq x\Big)=\frac{(k-1+\sum_{i=1}^{N(x)}k_i)!}{(k-1)!k_1!\cdots k_{N(x)}!}\times p^{\mathcal{E}}(x,x^*)^{k}\prod_{i=1}^{N(x)}p^{\mathcal{E}}(x,x^i)^{k_i},
\end{align*}
and clearly, if $N_x^{(1)}=0$, then $N_y^{(1)}=0$ for all $y\geq x$.    
\end{Fact}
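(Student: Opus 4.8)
The plan is to argue entirely under the quenched law $\P^{\mathcal{E}}$ for a fixed realization of the environment, since the stated formula involves only the transition probabilities $p^{\mathcal{E}}(x,\cdot)$; the annealed multi-type statement will then follow by integrating against $\Pb$ and invoking the branching structure of $\mathcal{E}$. Note that no diffusivity hypothesis is needed, as the result is purely structural. Fix $x\neq e$ and work on the event $\{N^{(1)}_x=k\}$. First I would decompose the trajectory up to $\tau^1$ into excursions of the walk inside the subtree rooted at $x$: an excursion begins at each upward crossing of the oriented edge $(x^*,x)$ and ends at the next downward crossing of $(x^*,x)$. Since $X_{\tau^1}=e$, the walk is never caught inside the subtree of $x\neq e$ at time $\tau^1$, so the numbers of upward and downward crossings of $(x^*,x)$ coincide; hence there are exactly $N^{(1)}_x=k$ such excursions, and each one is finite.

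The core step is to apply the strong Markov property at the successive entrance times into $x$ from $x^*$: the $k$ excursions are i.i.d. copies of a single excursion started at $x$ and run until the first step to $x^*$. Within one excursion the walk visits $x$ a random number of times and, at each visit, chooses its next step independently, moving to $x^*$ with probability $q:=p^{\mathcal{E}}(x,x^*)$ and to a child $x^i$ with probability $p_i:=p^{\mathcal{E}}(x,x^i)$, where $q+\sum_i p_i=1$. The excursion terminates precisely at the first parentward step, while every step toward a child $x^i$ is confined to $\T_x$ and thus followed by a return to $x$, contributing one unit to $N^{(1)}_{x^i}$. Concatenating the $k$ excursions, the sequence of steps taken out of $x$ is therefore distributed as an i.i.d. sequence with law $(q;p_1,\dots,p_{N(x)})$ stopped at its $k$-th parentward step. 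Consequently the event $\bigcap_i\{N^{(1)}_{x^i}=k_i\}$ corresponds to such a sequence of length $k-1+\sum_i k_i$ containing $k-1$ parentward steps and $k_i$ steps toward each $x^i$, terminated by a final ($k$-th) parentward step; counting the arrangements yields the coefficient $\frac{(k-1+\sum_i k_i)!}{(k-1)!\,k_1!\cdots k_{N(x)}!}$, and each arrangement has probability $q^k\prod_i p_i^{k_i}$, which is exactly the claimed expression. The degenerate case $N^{(1)}_x=0$ is immediate, as $x$ is then never entered and no descendant is visited.

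Finally I would read off the branching structure. The computation shows that, conditionally on $\{N^{(1)}_z;\,z\leq x\}$, the law of $(N^{(1)}_{x^i})_i$ depends only on the parent value $N^{(1)}_x$, and the strong Markov property makes the excursions into distinct subtrees rooted at a common generation conditionally independent given the local times at that generation. This is precisely the Markov branching property, so under $\P^{\mathcal{E}}$ the field $(N^{(1)}_x)_x$ is an environment-dependent multi-type process whose type is the edge local time; integrating over $\mathcal{E}$, whose own genealogy is an i.i.d. branching structure, produces a genuine multi-type Galton--Watson tree under $\P$. The root carries type $p$ because by definition of $\tau^p$ the walk crosses $(e^*,e)$ upward exactly $p$ times, i.e. $N^{(p)}_e=p$, and repeating the local excursion argument with $\tau^p$ in place of $\tau^1$ leaves the offspring mechanism unchanged. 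The delicate point, and the crux of the whole argument, is the i.i.d.-stopping representation: one must verify that conditioning on $\{N^{(1)}_x=k\}$ is equivalent to stopping the i.i.d. step sequence at its $k$-th parentward step, which is exactly what the excursion decomposition, together with the equality of up- and down-crossings of $(x^*,x)$, delivers.
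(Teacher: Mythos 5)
Your argument is correct: the decomposition into $k$ i.i.d.\ excursions from $x$ via the strong Markov property, the equality of up- and down-crossings of $(x^*,x)$, and the resulting negative-multinomial count of departure directions stopped at the $k$-th parentward step is exactly the standard proof of this statement. The paper itself does not reprove this Fact but imports it from Lemma 3.1 of the cited reference of A\"{\i}d\'ekon and de Raph\'elis, where the argument is the same excursion decomposition you give, so there is nothing to compare beyond noting that your reconstruction matches it.
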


\subsection{The volume of the range in critical generations}
For any $p\geq 1$ and $k\geq 0$, introduce $\VectCoord{\mathcal{R}}{p}_{k}$, the generation $k$ of the tree $\VectCoord{\mathcal{R}}{p}$
\begin{align*}
    \VectCoord{\mathcal{R}}{p}_{k}:=\{x\in\VectCoord{\mathcal{R}}{p};\; |x|=k\}.
\end{align*}
We denote by $\VectCoord{R}{p}_{k}$ the cardinal of $\VectCoord{\mathcal{R}}{p}_{k}$. Note that under the Assumptions \ref{Assumption1} and \ref{Assumption2}, the sequence of random variables $(\max_{x\in\mathcal{R}^{(n)}}|x|/n)_{n\geq 1}$ converges in law to an explicit and positive random variable (see Fact 3.4 in \cite{AK24LocCrtiGen}). Hence, generations of order $n$ are called critical generations for the tree $\mathcal{R}^{(n)}$. \\
Let us now introduce a very important sequence of random variables. For any $p\geq 1$ and $k\geq 0$, define
\begin{align}\label{DefMultiMart}
    \VectCoord{Z}{p}_k:=\sum_{|x|=k}\VectCoord{N}{p}_x.
\end{align}
Let $\VectCoord{\mathcal{G}}{p}_k$ be the sigma-algebra generated by $\{x\in\T, |x|\leq k,N_x^{(p)}\}$. One can notice, thanks to Fact \ref{FactGWMulti} that $\E[\sum_{|x|=k}\VectCoord{N}{p}_x|\VectCoord{\mathcal{G}}{p}_{k-1}]=\E[\sum_{|x|=k-1}\sum_{z;z^*=x}\VectCoord{N}{p}_z|\VectCoord{\mathcal{G}}{p}_{k-1}]=\sum_{|x|=k-1}
\VectCoord{N}{p}_x$. Hence, $(\VectCoord{Z}{p}_k/p)_{k\geq 0}$ is, under the annealed probability $\P$ and for any $p\in\N^*$, a non-negative $(\VectCoord{\mathcal{G}}{p}_k)_{k\in\N}$-martingale such that $\E[\VectCoord{Z}{p}_k/p]=1$ for all $k\in\N$ and $\VectCoord{Z}{p}_0/p=1$. $(\VectCoord{Z}{p}_k/p)_{k\geq 0}$ is referred to as the \textit{multi-type additive martingale} in \cite{deRaph1}. We also define 
\begin{align}
    \VectCoord{L}{p}_k:=\sum_{j=1}^{\tau^p}\un_{\{|X_j|=k\}},
\end{align}
the local time of $(|X_j|)_{j\geq 0}$ at time $\tau^p$ and level $k$. There exists a very simple relation between $(\VectCoord{L}{p}_k)_{k\geq 0}$ and $(\VectCoord{Z}{p}_k)_{k\geq 0}$: for any $k\geq 0$, $\VectCoord{L}{p}_k=\VectCoord{Z}{p}_k+\VectCoord{Z}{p}_{k+1}$.

\vspace{0.3cm}

\noindent Before stating our next theorem, we need a few more notations and definitions. Introduce the constant $c_0:=\Eb[\sum_{x\not=y;\; |x|=|y|=1}e^{-V(x)-V(y)}]/(1-e^{\psi(2)})$ which is well defined by Assumption \ref{Assumption2}. Let $(\mathcal{Y}_a;\;a\geq 0)$ be a continuous-state branching process (CSBP) with branching mechanism $\lambda\mapsto c_0\lambda^{2}$ such that $\mathcal{Y}_0=1$, in the sense that for almost every environment $\mathcal{E}$, $(\mathcal{Y}_a;\; a\geq 0)$ is a real-valued Markov process starting from $1$ such that for any $\lambda\geq 0$ and any $0\leq a\leq b$
\begin{align}\label{LaplaceCSBP}
    \E^{\mathcal{E}}\Big[e^{-\lambda\mathcal{Y}_b}\big|\mathcal{Y}_a\Big]=\exp\Big(-\lambda\mathcal{Y}_a\big(1+c_0(b-a)\lambda\big)^{-1}\Big). 
\end{align} 
The random process $(\mathcal{Y}_a;\;a\geq 0)$ has continuous paths and is also referred to as the Feller diffusion, that is the unique strong solution of
\begin{align*}
    \mathrm{d}\mathcal{Y}_a=\big(2c_0\mathcal{Y}_a\big)^{1/2}\mathrm{d}B_a\;\;\textrm{ and }\;\;\mathcal{Y}_0=1,
\end{align*}
where $(B_a;\; a\geq 0)$ is a standard Brownian motion. \\
Let $(S_j-S_{j-1})_{j\in\N^*}$ be a sequence of\textrm{ i.i.d }real valued random variables such that $S_0=0$ and for any measurable, non-negative function $f:\R\to\R$
\begin{align}\label{LoiRW}
    \Eb[f(S_1)]=\Eb\Big[\sum_{|x|=1}e^{-V(x)}f(V(x))\Big].
\end{align}
Introduce the positive constant $\bm{c}_{\infty}:=\Eb[(\sum_{j\geq 0}e^{-S_j})^{-1}]$. Our first result is joint convergence for the range, the multi-type additive martingale and the local time, in generation $\an$ and up to time $\tau^n$.
\begin{theo}\label{ThMultiMar}
Assume that the Assumptions \ref{Assumption1} and \ref{Assumption2} hold. We have, in law under $\P$ for the space of càdlàg functions $D([0,\infty),\R^3)$
\begin{align*}
        \Big(\frac{1}{n}\VectCoord{R}{n}_{\an},\;\frac{1}{n}\VectCoord{Z}{n}_{\an},\;\frac{1}{n}\VectCoord{L}{n}_{\an};\;  a\geq 0\Big)\underset{n\to\infty}{\longrightarrow}\big(\bm{c}_{\infty}W_{\infty}\mathcal{Y}_{a/W_{\infty}},\;W_{\infty}\mathcal{Y}_{a/W_{\infty}},\;2W_{\infty}\mathcal{Y}_{a/W_{\infty}};\; a\geq 0\big).
    \end{align*}
\end{theo}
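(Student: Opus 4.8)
The plan is to establish the joint convergence by reducing the three coordinates to a single governing object and then identifying the scaling limit as a Feller diffusion via a martingale problem / Laplace-functional characterization. The key observation is the deterministic relation $\VectCoord{L}{n}_{\an}=\VectCoord{Z}{n}_{\an}+\VectCoord{Z}{n}_{\an+1}$ together with the branching structure of the range given in Fact~\ref{FactGWMulti}. Since $(\VectCoord{Z}{n}_k/n)_{k\ge 0}$ is a non-negative martingale, the natural route is to show that the time-rescaled process $(\VectCoord{Z}{n}_{\an}/n;\; a\ge 0)$ converges to $W_\infty\mathcal{Y}_{a/W_\infty}$, and then deduce the limits of the other two coordinates from it: the local time is asymptotically $2\VectCoord{Z}{n}_{\an}/n$ because consecutive generations $\VectCoord{Z}{n}_{\an}$ and $\VectCoord{Z}{n}_{\an+1}$ become indistinguishable at the $\an$ timescale (the increment $k\mapsto k+1$ is negligible after rescaling time by $n$), and the volume $\VectCoord{R}{n}_{\an}/n$ converges to $\bm{c}_\infty W_\infty\mathcal{Y}_{a/W_\infty}$ by showing that the number of distinct visited vertices per unit of edge-local-time concentrates around the constant $\bm{c}_\infty=\Eb[(\sum_{j\ge 0}e^{-S_j})^{-1}]$.

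First I would condition on the environment $\mathcal{E}$ and on $W_\infty$, and work under the quenched law $\P^{\mathcal{E}}$. The central step is a functional convergence of the martingale $(\VectCoord{Z}{n}_{\an}/n)_{a\ge0}$ to a CSBP with branching mechanism $\lambda\mapsto c_0\lambda^2$, time-changed by $W_\infty$. For this I would verify the hypotheses of a diffusion-approximation theorem for the multi-type martingale: one computes the conditional infinitesimal mean (which vanishes by the martingale property) and the conditional infinitesimal variance across one generation, using Fact~\ref{FactGWMulti} to express $\Var(\VectCoord{Z}{n}_{k}\mid\VectCoord{\mathcal{G}}{n}_{k-1})$ in terms of $\VectCoord{Z}{n}_{k-1}$ and the branching weights. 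The constant $c_0=\Eb[\sum_{x\ne y;\,|x|=|y|=1}e^{-V(x)-V(y)}]/(1-e^{\psi(2)})$ should emerge as the asymptotic variance per unit local time, so that the quadratic variation of $\VectCoord{Z}{n}_{\an}/n$ converges to $2c_0\int_0^a\VectCoord{Z}{n}_{s}/n\,\ud s$, which is exactly the quadratic variation of the Feller diffusion $\mathrm{d}\mathcal{Y}=(2c_0\mathcal{Y})^{1/2}\mathrm{d}B$ after the $W_\infty$ time-change. I would check tightness via Aldous's criterion and then identify the limit by the Laplace transform \eqref{LaplaceCSBP}, passing to the limit in the branching recursion of the Laplace functional.

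The most delicate technical point, and the step I expect to be the main obstacle, is controlling the fluctuations of the per-vertex local times finely enough to prove that $\VectCoord{R}{n}_{\an}/n\to\bm{c}_\infty W_\infty\mathcal{Y}_{a/W_\infty}$ with the correct deterministic constant $\bm{c}_\infty$. Whereas $\VectCoord{Z}{n}_{\an}=\sum_{|x|=\an}\VectCoord{N}{n}_x$ aggregates edge-local-times, $\VectCoord{R}{n}_{\an}=\#\{|x|=\an:\VectCoord{N}{n}_x\ge1\}$ counts only whether a vertex is visited at all, so the ratio $\VectCoord{R}{n}_{\an}/\VectCoord{Z}{n}_{\an}$ is governed by the inverse mean local time along the spine. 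I would use a spinal decomposition (many-to-one type identity) to reduce the expectation of $\VectCoord{R}{n}_{\an}$ to a functional of the random walk $(S_j)$ defined in \eqref{LoiRW}, where the factor $(\sum_{j\ge0}e^{-S_j})^{-1}$ appears as the asymptotic probability that a given potential vertex actually carries positive local time, and then establish an $L^2$ or second-moment concentration of $\VectCoord{R}{n}_{\an}$ around $\bm{c}_\infty\VectCoord{Z}{n}_{\an}$ so that the random constant cancels and only $\bm{c}_\infty$ survives in the limit. Putting the three coordinates together via their joint tightness and the Skorokhod topology on $D([0,\infty),\R^3)$, together with the continuous-mapping identities $\VectCoord{L}{n}\sim2\VectCoord{Z}{n}$ and $\VectCoord{R}{n}\sim\bm{c}_\infty\VectCoord{Z}{n}$, then yields the stated joint limit.
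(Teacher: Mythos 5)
Your reduction of the three coordinates to the single process $\VectCoord{Z}{n}_{\an}$ is exactly the paper's strategy: Lemma \ref{ReducedProcesses} proves, by the same second-moment/many-to-one computations you sketch, that $\sup_a|\VectCoord{R}{n}_{\an}-\bm{c}_{\infty}\VectCoord{Z}{n}_{\an}|$ and $\sup_a|\VectCoord{L}{n}_{\an}-2\VectCoord{Z}{n}_{\an}|$ are $o(n)$ in probability, with $\bm{c}_{\infty}=\Eb[(\sum_{j\geq 0}e^{-S_j})^{-1}]$ arising precisely as you predict. Where you diverge is the identification of the limit of $\VectCoord{Z}{n}_{\an}/n$. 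The paper does \emph{not} run a martingale problem for $\VectCoord{Z}{n}$; it only proves $C$-tightness of $(\VectCoord{Z}{n}_{\an}/n)$ by Doob plus second moments, and then identifies the limit of $\VectCoord{L}{n}_{\an}/n$ by importing the known joint convergence of $(\tau^n/n^2,\,|X_{\lfloor sn^2\rfloor}|/n)$ to $(\tau_{-W_{\infty}},H_{\cdot})$, showing via an occupation-measure argument with Lipschitz test functions that $\VectCoord{L}{n}_{\an}/n$ converges to twice the local time of the height process $H$ at level $aC_{\infty}$ up to $\tau_{-W_{\infty}}$, and invoking the Ray--Knight theorem of Duquesne--Le Gall to recognize that local-time process as the CSBP $W_{\infty}\mathcal{Y}_{a/W_{\infty}}$. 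Your route is more self-contained (no appeal to the height-process convergence), while the paper's route outsources the hard probabilistic input to existing results and gets the Feller diffusion for free from Ray--Knight.

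Two concrete obstacles in your identification step need attention. First, the one-generation conditional variance $\Var(\VectCoord{Z}{n}_k\mid\VectCoord{\mathcal{G}}{n}_{k-1})$ is \emph{not} a function of $\VectCoord{Z}{n}_{k-1}$ alone: by Fact \ref{FactGWMulti} it involves $\sum_{|x|=k-1}(\VectCoord{N}{n}_x)^2$ as well, and the constant $c_0=\Eb[\sum_{x\not=y;\,|x|=|y|=1}e^{-V(x)-V(y)}]/(1-e^{\psi(2)})$, with its geometric factor $(1-e^{\psi(2)})^{-1}$, only emerges after an additional averaging over the ancestral generations. So your claim that the quadratic variation converges to $2c_0\int_0^a(\VectCoord{Z}{n}_{\sn}/n)\,\ud s$ requires an extra law of large numbers for $\sum_{|x|=\sn}(\VectCoord{N}{n}_x)^2/n$, which you do not address and which is not a consequence of the martingale property. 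Second, your opening move of conditioning on $\mathcal{E}$ and working under $\P^{\mathcal{E}}$ undercuts the very structure you rely on: the multi-type Galton--Watson property of Fact \ref{FactGWMulti} and the clean offspring law hold under the annealed law $\P$, not quenched, and the paper explicitly notes that its argument does not yield a quenched version of the theorem. You should carry out the whole identification under $\P$; as written, the quenched martingale-problem computation would produce an environment-dependent infinitesimal variance rather than the deterministic branching mechanism $\lambda\mapsto c_0\lambda^2$.
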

\noindent Hence, we refer to the generations of order $n$ as the critical generations for $\mathcal{R}^{(n)}$. Note that $\VectCoord{R}{n}_{\an}$, $\VectCoord{L}{n}_{\an}$  and $\VectCoord{L}{n}_{\an}$ are proportional as $n\to\infty$. Indeed, it turns out that the vertices in $\{|x|=\an;\; N^{(n)}_x=1\}$ give a contribution of the same order as the vertices in $\{|x|=\an;\; N^{(n)}_x>1\}$ to the set $\mathcal{R}^{(n)}_{\an}$. \\
Note that for any $m\geq 1$, under $\Pb$
\begin{align}\label{SmoothingTransform}
    W_{\infty}\overset{(\textrm{law})}{=}\sum_{|u|=m}e^{-V(u)}W^{(u)}_{\infty},
\end{align}
where $(W^{(u)}_{\infty};\; |u|=m)$ is a collection of\textrm{ i.i.d }copies of $W_{\infty}$ and independent of $((u,V(u));\; |u|=m)$. Hence, if we set $\mathcal{Z}_a(t):=t\mathcal{Y}_{a/t}$ for any $t>0$ and $\mathcal{Z}_a(0)=0$, then, by \eqref{LaplaceCSBP}, under $\P$
\begin{align}\label{NotSmoothingTransform}
    \mathcal{Z}_a(W_{\infty})\overset{(\textrm{law})}{=}\sum_{|u|=m}e^{-V(u)}\mathcal{Z}^{(u)}_{ae^{V(u)}}(W_{\infty}^{(u)}),
\end{align}
where for almost-every environment $\mathcal{E}$, $(\mathcal{Y}^{(u)};\; |u|=m)$ is a collection of\textrm{ i.i.d }copies under $\P^{\mathcal{E}}$ of $\mathcal{Y}$. But clearly, $(\mathcal{Z}^{(u)}_{ae^{V(u)}}(W_{\infty}^{(u)});\; |u|=m)$ is not a collection of\textrm{ i.i.d }copies of $\mathcal{Z}_a(W_{\infty})$. \\
Also, one can see that $\P^{\mathcal{E}}$-almost surely, $\lim_{t\to 0}\mathcal{Z}_a(t)=0$, thus ensuring the continuity of the paths of $\mathcal{Z}_a(\cdot)$ for any $a\geq 0$.

\vspace{0.2cm}

\noindent Combining our present Lemma \ref{ReducedProcesses} and Theorem 2.2 in \cite{AK24LocCrtiGen} with $\kappa>2$, one can deduce the following quenched convergence: for any continuous and bounded function $F:\R^3\mapsto\R$, in $\Pb$-probability
\begin{align}\label{QuenchedConv}
    \E^{\mathcal{E}}\Big[F\Big(\frac{1}{n}\VectCoord{R}{n}_{\an},\;\frac{1}{n}\VectCoord{Z}{n}_{\an},\;\frac{1}{n}\VectCoord{L}{n}_{\an}\Big)\Big]\underset{n\to\infty}{\longrightarrow}\E^{\mathcal{E}}\Big[F\big(\bm{c}_{\infty}W_{\infty}\mathcal{Y}_{a/W_{\infty}},\;W_{\infty}\mathcal{Y}_{a/W_{\infty}},\;2W_{\infty}\mathcal{Y}_{a/W_{\infty}};\; a\geq 0\big)\Big],
\end{align}
The convergence in \eqref{QuenchedConv} can be extended to the convergence of the finite-dimensional marginals in the sequence $(\frac{1}{n}\VectCoord{R}{n}_{\an},\;\frac{1}{n}\VectCoord{Z}{n}_{\an},\;\frac{1}{n}\VectCoord{L}{n}_{\an};\;  a\geq 0)$ but our argument does not provide a quenched version of Theorem \ref{ThMultiMar}.

\noindent 

\subsection{A genealogy problem for critical generations}\label{GenealogyProblem}

Let $b>0$ and $p\geq 1$. Introduce the random variable $(\VectCoord{\mathcal{X}}{1,p}_{\bn},\VectCoord{\mathcal{X}}{2,p}_{\bn})$ taking values in $\mathcal{R}^{(p)}_{\bn}\times\mathcal{R}^{(p)}_{\bn}$ with law defined by: for any $(x,y)\in\mathcal{U}\times\mathcal{U}$
\begin{align}\label{DefSampling}
    \P\Big(\big(\VectCoord{\mathcal{X}}{1,p}_{\bn},\VectCoord{\mathcal{X}}{2,p}_{\bn}\big)=(x,y)\Big):=\E\left[\frac{1}{\big(\VectCoord{R}{p}_{\bn}\big)^2}\un_{\big\{(x,y)\in\mathcal{R}^{(p)}_{\bn}\times\mathcal{R}^{(p)}_{\bn}\big\}}\Big|\VectCoord{R}{p}_{\bn}>0\right].
\end{align}
Under the Assumptions \ref{Assumption1} and \ref{Assumption2}, $\P(R^{(1)}_{\bn}>0)$ is positive so $(\VectCoord{\mathcal{X}}{1,p}_{\bn},\VectCoord{\mathcal{X}}{2,p}_{\bn})$ is well defined.

\vspace{0.1cm}

\noindent Note that $(\VectCoord{\mathcal{X}}{1,p}_{\bn},\VectCoord{\mathcal{X}}{2,p}_{\bn})$ is nothing but a couple of vertices picked uniformly in the set $\mathcal{R}^{(p)}_{\bn}\times\mathcal{R}^{(p)}_{\bn}$, conditionally on the event $\{\textrm{the generation }\bn\textrm{ of the range }\mathcal{R}^{(p)}\textrm{ is non empty}\}$. We are interested in the behavior of $|\VectCoord{\mathcal{X}}{1,p}_{\bn}\land\VectCoord{\mathcal{X}}{2,p}_{\bn}|$ as $n\to\infty$, where we recall that $\VectCoord{\mathcal{X}}{1,p}_{\bn}\land\VectCoord{\mathcal{X}}{2,p}_{\bn}$ denotes the most recent common ancestor of the vertices $\VectCoord{\mathcal{X}}{1,p}_{\bn}$ and $\VectCoord{\mathcal{X}}{2,p}_{\bn}$. It turns out that with a non-trivial probability, the coalescence happens in a generation proportional to $\bn$ or, with a non-trivial probability, the coalescence occurs close to the root of $\T$. A similar phenomenon happens for the genealogy of extremal particles of branching Brownian motion, see \cite{GenBBM}.

\begin{theo}[Two vertices picked in generation $\bn$ of $\mathcal{R}^{(n)}$]\label{GenealogyDiffCritic}
Assume that the Assumptions \ref{Assumption1} and \ref{Assumption2} hold. Let $a\in(0,b)$. We have
\begin{align}\label{RecentPastCoa}
    \lim_{n\to\infty}\P\big(|\VectCoord{\mathcal{X}}{1,n}_{\bn}\land\VectCoord{\mathcal{X}}{2,n}_{\bn}|\geq\an\big)=\E\left[\frac{\sum_{j=1}^{N_{a,b}}(\xi_j)^2}{(\sum_{j=1}^{N_{a,b}}\xi_j)^2}\Big| N_{a,b}>0\right],
\end{align}
with $N_{a,b}:=\sum_{i=1}^{N_b}G^i_{a,b}$, where $N_b$ is, under $\P^{\mathcal{E}}$, a Poisson random variable with parameter $W_{\infty}/(bc_0)$, $(G^i_{a,b})_{i\geq 1}$ is a sequence of\textrm{ i.i.d }Geometric random variables on $\N^*$ with probability of success $1-a/b$. $(\xi_j)_{j\geq 1}$ is a sequence of\textrm{ i.i.d }Exponential random variables with mean $1$. Besides, all random variables involved are independent. $N_{a,b}$ is also known as a Geometric-Poisson (or Pólya-Aeppli) random variable with parameter $(W_{\infty}/(bc_0),1-a/b)$. \\
Also, for any $m\in\N^*$
\begin{align}\label{RemotePastCoa}
    \lim_{n\to\infty}\P\big(|\VectCoord{\mathcal{X}}{1,n}_{\bn}\land\VectCoord{\mathcal{X}}{2,n}_{\bn}|<m\big) =1-\E\left[\frac{\sum_{|u|=m}\Big(Z^{(u)}_{\infty}\mathcal{Y}^{(u)}_{b/Z^{(u)}_{\infty}}\Big)^2}{\Big(\sum_{|u|=m}Z^{(u)}_{\infty}\mathcal{Y}^{(u)}_{b/Z^{(u)}_{\infty}}\Big)^2}\Bigg|\max_{|u|=m}\mathcal{Y}^{(u)}_{b/Z^{(u)}_{\infty}}>0\right],
\end{align}
where, for almost-every environment $\mathcal{E}$, $(\mathcal{Y}^{(z)};\; |z|=m)$ is a collection of\textrm{ i.i.d }copies under $\P^{\mathcal{E}}$ of $\mathcal{Y}$ (see \eqref{LaplaceCSBP}), $(W^{(z)}_{\infty};\; |z|=m)$ is a collection of\textrm{ i.i.d }copies of $W_{\infty}$ and independent of $((z,V(z));\; |z|=m)$ and $Z^{(u)}_{\infty}:=e^{-V(u)}W^{(u)}_{\infty}$. \\
\noindent Moreover
\begin{align}\label{SommeVaut1}
    \lim_{a\to 0}\;\lim_{n\to\infty}\P(|\VectCoord{\mathcal{X}}{1,n}_{\bn}\land\VectCoord{\mathcal{X}}{2,n}_{\bn}|\geq\an)+\lim_{m\to\infty}\;\lim_{n\to\infty}\P(|\VectCoord{\mathcal{X}}{1,n}_{\bn}\land\VectCoord{\mathcal{X}}{2,n}_{\bn}|<m)=1.
\end{align}
\end{theo}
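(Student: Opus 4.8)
The plan is to treat all three displays through a single ancestral decomposition and then pass to the limit with Theorem~\ref{ThMultiMar} and the genealogy of the limiting Feller diffusion. Write $\VectCoord{R}{n}_{\bn}(u):=\#\{x\in\VectCoord{\mathcal{R}}{n}_{\bn};\; x\geq u\}$ for the number of generation-$\bn$ vertices of the range below $u$, so that $\VectCoord{R}{n}_{\bn}=\sum_{|u|=g}\VectCoord{R}{n}_{\bn}(u)$ for any level $g\le\bn$. Two uniformly sampled vertices of $\VectCoord{\mathcal{R}}{n}_{\bn}$ have their most recent common ancestor at level $\ge g$ precisely when they descend from one common $u$ with $|u|=g$; hence the law \eqref{DefSampling} gives
\begin{align*}
\P\big(|\VectCoord{\mathcal{X}}{1,n}_{\bn}\land\VectCoord{\mathcal{X}}{2,n}_{\bn}|\geq g\big)=\E\left[\frac{\sum_{|u|=g}\big(\VectCoord{R}{n}_{\bn}(u)\big)^2}{\big(\VectCoord{R}{n}_{\bn}\big)^2}\;\middle|\;\VectCoord{R}{n}_{\bn}>0\right].
\end{align*}
Both \eqref{RecentPastCoa} (with $g=\an$) and the event behind \eqref{RemotePastCoa} (with $g=m$ fixed) are limits of this one ratio, in which the scale of $\VectCoord{R}{n}_{\bn}$ cancels, so that only the relative sizes of the subpopulations rooted at the level-$g$ ancestors survive in the limit.

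For the remote past I fix $g=m$. By Fact~\ref{FactGWMulti} and the branching property of the environment, the ranges hanging below the finitely many vertices $u$ with $|u|=m$ are, given the environment and the local times $(\VectCoord{N}{n}_u;|u|=m)$, independent; the subtree at $u$ evolves in the environment translated by $V(u)$ and is started from $\VectCoord{N}{n}_u$, where the local time at a fixed vertex satisfies $\VectCoord{N}{n}_u/n\to e^{-V(u)}$. Applying Theorem~\ref{ThMultiMar} to each such subtree, observed at the far generation $\bn$ and whose environment has martingale limit $W^{(u)}_\infty$ (so that its effective initial CSBP mass is $Z^{(u)}_\infty=e^{-V(u)}W^{(u)}_\infty$), yields the joint convergence $n^{-1}\VectCoord{R}{n}_{\bn}(u)\to\bm{c}_\infty Z^{(u)}_\infty\mathcal{Y}^{(u)}_{b/Z^{(u)}_\infty}$ for $|u|=m$, with $(\mathcal{Y}^{(u)})_{|u|=m}$ independent copies under $\P^{\mathcal{E}}$. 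Substituting into the displayed ratio, the factor $\bm{c}_\infty$ cancels and the conditioning $\{\VectCoord{R}{n}_{\bn}>0\}$ passes to $\{\max_{|u|=m}\mathcal{Y}^{(u)}_{b/Z^{(u)}_\infty}>0\}$; since the ratio is bounded by $1$ the convergence of the expectation is immediate, and taking complements gives \eqref{RemotePastCoa}.

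The core of the argument is the recent past, $g=\an$. Heuristically, Theorem~\ref{ThMultiMar} shows the level-$\an$ local-time profile rescaled by $n$ converges to $\mathcal{Z}_a(W_\infty)=W_\infty\mathcal{Y}_{a/W_\infty}$; a single level-$\an$ vertex carries descendants at level $\bn$ only with probability of order $1/n$ (the Feller diffusion started from mass of order $1/n$ survives the remaining span $b-a$ with probability of that order), so the surviving ancestors form a sparse set; and conditionally on survival the rescaled subpopulation $n^{-1}\VectCoord{R}{n}_{\bn}(u)$ of each survivor converges to an exponential variable whose law depends only on $b-a$ (the small-mass non-extinction limit of the Feller diffusion), making these limits i.i.d. copies $\xi_j$ whose common scale again cancels in the ratio. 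The remaining task is to count the survivors jointly with their subpopulations, and I would do this by identifying the limiting ancestral configuration with the reduced tree of the CSBP $(\mathcal{Z}_s(W_\infty))_{s}$ between times $a$ and $b$ through Lemma~\ref{ReducedProcesses} and Theorem~\ref{ThMultiMar}. Grouping the level-$\an$ survivors by their ancestors near the root yields $N_b$ independent families, where the number of time-$0$ ancestors surviving to time $b$ of a CSBP started from mass $W_\infty$ is Poisson with parameter $W_\infty/(bc_0)$, consistently with $\P(\mathcal{Z}_b(W_\infty)=0)=e^{-W_\infty/(c_0b)}=\P(N_b=0)$. Within one family the number of level-$a$ survivors is Geometric with success probability $1-a/b$; I would confirm this from \eqref{LaplaceCSBP} by checking that the mean number of level-$a$ survivors per family equals $b/(b-a)$ and that the reduced process is the associated birth process. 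This produces $N_{a,b}=\sum_{i=1}^{N_b}G^i_{a,b}$ and the right-hand side of \eqref{RecentPastCoa}.

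The main obstacle is precisely this last transfer: proving the joint convergence of the pair $\big(\#\{\text{level-}\an\text{ survivors}\},(n^{-1}\VectCoord{R}{n}_{\bn}(u))_{\text{survivors}}\big)$ to $(N_{a,b},(\xi_j))$ in the discrete multi-type tree. The subtlety is that individual level-$\an$ local times are $O(1)$ while their number is $O(n)$, only $O(1)$ of them survive, and the survival events are weakly dependent through the conditioning $\{\VectCoord{R}{n}_{\bn}>0\}$; I would control this by a factorial-moment (or Laplace-functional) computation for the joint law of the survivors, using Fact~\ref{FactGWMulti} for subtree independence and Theorem~\ref{ThMultiMar} to pass to the CSBP, together with a uniform-integrability bound ensuring no mass escapes into infinitely many vanishing survivors. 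Finally, for \eqref{SommeVaut1} I would let $a\to0$ in \eqref{RecentPastCoa}: then $G^i_{a,b}\to1$ almost surely, hence $N_{a,b}\to N_b$, and bounded convergence gives $\lim_{a\to0}\lim_n\P(|\VectCoord{\mathcal{X}}{1,n}_{\bn}\land\VectCoord{\mathcal{X}}{2,n}_{\bn}|\ge\an)=\E\big[\frac{\sum_{j=1}^{N_b}\xi_j^2}{(\sum_{j=1}^{N_b}\xi_j)^2}\,\big|\,N_b>0\big]=:L$. Letting $m\to\infty$ in \eqref{RemotePastCoa}, the branching identity \eqref{NotSmoothingTransform} gives $\sum_{|u|=m}Z^{(u)}_\infty\mathcal{Y}^{(u)}_{b/Z^{(u)}_\infty}\egloi\mathcal{Z}_b(W_\infty)$, and the surviving level-$m$ contributions converge to the same $N_b$ exponential family atoms of the time-$b$ backbone decomposition (the degenerate $a\to0$ case of the reduced tree), so that $\lim_{m\to\infty}\lim_n\P(|\VectCoord{\mathcal{X}}{1,n}_{\bn}\land\VectCoord{\mathcal{X}}{2,n}_{\bn}|<m)=1-L$. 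Adding the two one-sided limits yields $L+(1-L)=1$, which is \eqref{SommeVaut1}.
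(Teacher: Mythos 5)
Your opening ancestral decomposition, writing $\P(|\VectCoord{\mathcal{X}}{1,n}_{\bn}\land\VectCoord{\mathcal{X}}{2,n}_{\bn}|\geq g)$ as $\E[\sum_{|u|=g}(R^{(n)}_{\bn}(u))^2/(R^{(n)}_{\bn})^2\,|\,R^{(n)}_{\bn}>0]$, is exactly the paper's starting point (there encoded by the pair-counting variable $\mathcal{C}^{(n)}_{\bn,g}$ and the Laplace functional $\mathbfcal{C}^{(n)}_{\bn,g}(f)$ with $f(\mathfrak{z})=\lambda_1\mathfrak{z}^2+\lambda_2\mathfrak{z}$), and your treatments of the remote past and of \eqref{SommeVaut1} follow the paper's route in outline. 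The genuine gap is in the recent past, and it is the step you yourself flag as ``the main obstacle'': the joint convergence of the number of surviving level-$\an$ ancestors together with their rescaled subpopulations to $(N_{a,b},(\xi_j)_j)$. You propose to obtain it from the reduced tree of the limiting CSBP plus an unspecified factorial-moment computation and a uniform-integrability bound, but none of this is carried out, and it is precisely here that the paper does all of its work. Two ingredients are missing. First, the exponential limit law of one survivor's subpopulation: the paper's Lemma \ref{ConvLoiCondi} shows that $\P(R^{(1)}_{\lfloor(b-a)n\rfloor}/n\in\cdot\,|\,R^{(1)}_{\lfloor(b-a)n\rfloor}>0)$ converges to the law of $\bm{c}_{\infty}c_0(b-a)\xi$; this is not read off from a ``small-mass limit of the Feller diffusion'' but proved by decomposing $Z^{(n)}$ over the reduced set $\mathcal{B}^{(n)}$, extracting $\lim_n n(1-\E[e^{-\theta Z^{(1)}_{\an}/n}])=\theta/(1+\theta c_0a)$ from Theorem \ref{ThMultiMar}, and invoking the known asymptotics of $n\P(R^{(1)}_{\an}>0)$. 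Second, the passage from one excursion to $N^{(n)}_u$ excursions: since the level-$\an$ local times range up to order $n\log n$, one needs the two-sided estimate $g_{\bn-\an}(n,p,f)=e^{o(1)}\,g_{\bn-\an}(n,1,f)^p$ uniformly in $p\le n\log n$, which the paper obtains via FKG and Jensen for the lower bound, convexity and a second-moment bound for the upper bound, and the single-excursion event $\mathfrak{S}_n$ of Lemma \ref{SingleExcu} to decouple excursions. The weak dependence between survival events that you acknowledge is exactly what this sandwich controls; naming it does not dispose of it. Only after this does the limiting Laplace transform $\Eb[e^{-W_{\infty}(1-\phi_{a,b}(f))/(c_0(b-a\phi_{a,b}(f)))}]$ appear and get matched to the Poisson--Geometric compound.

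Two smaller points. In the remote past you apply Theorem \ref{ThMultiMar} ``to each subtree'', but the subtree at $u$ is started from the random number $N^{(n)}_u$ of excursions rather than from $n$; the paper handles this by proving uniform convergence in $t$ of $g_{\bn-m}(n,\tn,f)$ via Dini's theorem before substituting $t=N^{(n)}_u/n$. And your $m\to\infty$ limit in \eqref{SommeVaut1} needs the quantitative statement that the cross-family terms vanish, which the paper gets from $\Eb[\sum_{|u|=m}e^{-2V(u)}]=e^{m\psi(2)}\to0$; your appeal to ``the same $N_b$ exponential family atoms'' is the right picture but not an argument. These are repairable; the recent-past gap is the substance of the theorem.
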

\noindent Equation \eqref{RecentPastCoa} is strongly reminiscent of Theorem 2.1 in \cite{Athreya_SUB_CT} where the author dealt with the genealogical structure of a regular critical Galton-Watson tree. In this latter case, $N_{a,b}$ is a Geometric random variable whereas, in our case, $N_{a,b}$ is a sum of a Poisson number of Geometric random variables and in view of equation \eqref{RemotePastCoa}, this makes a huge difference. Indeed, the coalescence also occurs close to the root with a positive probability. Hence, we have a mixture between what we observe when we pick two vertices uniformly in any generation $l_n$ with $l_n\to\infty$ as $n\to\infty$ in a regular supercritical Galton-Watson tree (see Theorem 2 in \cite{Athreya_SUPER_CT} for instance) and what we observe when we pick two vertices uniformly in the generation $\bn$ of a regular critical Galton-Watson tree. 

\vspace{0.2cm}

\noindent Let us present alternative expressions of \eqref{RecentPastCoa} and \eqref{RemotePastCoa}. Note that if $\psi_b(\lambda):=\Eb[e^{-\lambda\frac{W_{\infty}}{bc_0}}]$ and $\psi^{(j)}_b(\lambda):=\frac{\mathrm{d}^j\psi_b(\lambda)}{\mathrm{d}\lambda^j}$, then we have a power series form of \eqref{RecentPastCoa}:
\begin{align}\label{ExpressionSerie}
    \E\left[\frac{\sum_{j=1}^{N_{a,b}}(\xi_j)^2}{(\sum_{j=1}^{N_{a,b}}\xi_j)^2}\Big| N_{a,b}>0\right]=\frac{2}{1-\psi_b(1)}\sum_{k\geq 1}\frac{1}{b^k(k+1)}\sum_{j=1}^k\binom{k-1}{j-1}\frac{(-1)^j\psi^{(j)}_b(1)}{j!}a^{k-j}(b-a)^j,
\end{align}
which is reminiscent but deeply different of formula (4) in \cite{DurrettGenealogy} for critical Galton-Watson trees. We also have the following integral expression for \eqref{RemotePastCoa}:
\begin{align}\label{RemotePastCoa2}
    &\E\left[\frac{\sum_{|u|=m}\Big(Z^{(u)}_{\infty}\mathcal{Y}^{(u)}_{b/Z^{(u)}_{\infty}}\Big)^2}{\Big(\sum_{|u|=m}Z^{(u)}_{\infty}\mathcal{Y}^{(u)}_{b/Z^{(u)}_{\infty}}\Big)^2}\Bigg|\max_{|u|=m}\mathcal{Y}^{(u)}_{b/Z^{(u)}_{\infty}}>0\right]\nonumber \\[0.4em] & =\frac{1}{1-\psi_b(1)}\int_0^{\infty}\frac{\lambda}{(1+\lambda bc_0)^4}\Eb\left[\sum_{|u|=m}\Big((Z^{(u)}_{\infty})^2+2bc_0(1+\lambda bc_0)Z^{(u)}_{\infty}\Big)e^{-\frac{\lambda}{1+\lambda bc_0}\sum_{|u|=m}Z^{(u)}_{\infty}}\right]\mathrm{d}\lambda.
\end{align}

\begin{remark}\label{RemSmallGenerations2}
One can see that
\begin{align}\label{RemSmallGenerations21}
    \lim_{b\to 0}\;\underset{a<b}{\lim_{a\to 0}}\;\lim_{n\to\infty}\P\big(|\VectCoord{\mathcal{X}}{1,n}_{\bn}\land\VectCoord{\mathcal{X}}{2,n}_{\bn}|\geq\an\big)=0,
\end{align}
and if $\mathcal{A}_{n,m}:=\lim_{n\to\infty}\sum_{x\not= y;|x|=|y|=n}e^{-V(x)}e^{-V(y)}\un_{\{|x\land y|<m\}}$, then
\begin{align}\label{RemSmallGenerations22}
    \lim_{b\to 0}\;\lim_{n\to\infty}\P\big(|\VectCoord{\mathcal{X}}{1,n}_{\bn}\land\VectCoord{\mathcal{X}}{2,n}_{\bn}|<m\big)=\Eb^*\Big[\frac{\mathcal{A}_{n,m}}{(W_{\infty})^2}\Big].
\end{align}
The right-hand side in \eqref{RemSmallGenerations22} corresponds to the probability that two vertices picked uniformly in a small generation for $\mathcal{R}^{(n)}$ (that is a generation $\ell_n$ such that $\ell_n=o(n)$ and $\ell_n\geq\delta_0^{-1}\log n$ for some $\delta_0>0$ ) have their most recent common ancestor below generation $m$, see Theorem 1.3 in \cite{AK23} for $k=2$ and $\kappa>4$. In particular, we observe some kind of continuity between small generations and critical generations, see section \ref{LastSection}.
\end{remark}

\begin{remark}[Two vertices picked in generation $\bn$ of $\mathcal{R}^{(1)}$]\label{RemarkGen1Excu0}
It is interesting to note that sampling two vertices uniformly in generation $\bn$ in $\mathcal{R}^{(1)}$ leads to a very different result. Assume that the Assumptions \ref{Assumption1} and \ref{Assumption2} hold and let $a\in(0,b)$. We have
\begin{align}\label{RemarkGen1Excu}
    \lim_{n\to\infty}\P\big(|\VectCoord{\mathcal{X}}{1,1}_{\bn}\land\VectCoord{\mathcal{X}}{2,1}_{\bn}|\geq\an\big)=\E\left[\frac{\sum_{j=1}^{G_{a,b}}(\xi_j)^2}{(\sum_{j=1}^{N_{a,b}}\xi_j)^2}\right]=\frac{2(b-a)}{a^2}\Big(b\log\Big(\frac{b}{b-a}\Big)-a\Big).
\end{align}
In particular, the coalescence can only occur in the recent past, see section \ref{LastSection}. Note that in the present case, the genealogy structure is the same as for a critical Galton-Watson trees, see Theorem 2.1 in \cite{Athreya_SUB_CT} and also \cite{HarrisJohnstonRoberts1}.
\end{remark}

\subsection{Range and genealogy in the sub-diffusive case}

When $\kappa\in(1,2]$, that is the sub-diffusive regime for the random walk $\X$, we prove in an upcoming work that the following convergence holds for the finite-dimensional marginals, under $\P$ and under $\P^{\mathcal{E}}$ for almost every environment $\mathcal{E}$:
\begin{align*}
    \Big(\frac{1}{n}\VectCoord{R}{n}_{\lfloor a\mathfrak{z}_n\rfloor},\;\frac{1}{n}\VectCoord{L}{n}_{\lfloor a\mathfrak{z}_n\rfloor};\;  a\geq 0\Big)\underset{n\to\infty}{\longrightarrow}\big(\bm{c}_{\infty}W_{\infty}\mathcal{Y}^{(\kappa)}_{a/W_{\infty}},\;2W_{\infty}\mathcal{Y}^{(\kappa)}_{a/W_{\infty}};\; a\geq 0\big),
\end{align*}
where, $\mathfrak{z}_n:=n^{\kappa\land 2-1}$ if $\kappa\not=2$ and $\mathfrak{z}_n:=n/\log n$ if $\kappa=2$. Also, $(\mathcal{Y}^{(\kappa)}_a;\; a\geq 0)$ is a CSBP with branching mechanism $\lambda\mapsto C_{\infty}\bm{\mathrm{C}}_{\kappa}\lambda^{\kappa\land 2}$ such that $\VectCoord{\mathcal{Y}_0}{\kappa}=1$ for some explicit constant $\bm{\mathrm{C}}_{\kappa}>0$, in the sense that for almost every environment $\mathcal{E}$, $(\mathcal{Y}^{(\kappa)}_a;\; a\geq 0)$ is a real-valued Markov process such that for any $\lambda\geq 0$ and any $0\leq a\leq b$
\begin{align*}
    \E^{\mathcal{E}}\Big[e^{-\lambda\mathcal{Y}^{(\kappa)}_b}\big|\mathcal{Y}^{(\kappa)}_a\Big]= \exp\Big(-\lambda\VectCoord{\mathcal{Y}}{\kappa}_a\big(1+(b-a)(\kappa\land 2-1)C_{\infty}\bm{\mathrm{C}}_{\kappa}\lambda^{\kappa\land 2-1}\big)^{-1/(\kappa\land 2-1)}\Big).
\end{align*} 
However, our arguments do not provide this convergence in law for the space of càdlàg functions $D([0,\infty),\R^2)$, even under the annealed probability $\P$. \\
We do expect the same genealogy structure as in the diffusive case (Theorem \ref{GenealogyDiffCritic} and Remark \ref{RemarkGen1Excu0}) but with different random variables involved. The genealogy in the sub-diffusive case will be treated in a future work.

\vspace{0.3cm}

\noindent The rest of the paper is organized as follows: in order to prove our results properly, we first present a few preliminary results, see section \ref{SectionPR}. After this, we show Theorem \ref{ThMultiMar}. We then prove Theorem \ref{GenealogyDiffCritic}. The end of the paper is dedicated to the continuity between critical generations and small generations, see Remark \ref{RemSmallGenerations2} and to vertices sampled in $\mathcal{R}^{(1)}$, see Remark \ref{RemarkGen1Excu0}.

\section{Proofs of the results}\label{SectionProofs}

\subsection{Preliminary results}\label{SectionPR}

Recall the definition of the random walk $(S_j)_{j\geq 0}$ in \eqref{LoiRW}. We have the following many-to-one lemma: for any $\ell\in\N^*$ and any measurable and non-negative function $\mathfrak{f}:\R^{\ell}\to\R$
\begin{align}\label{ManyTo1}
    \Eb\Big[\sum_{|x|=\ell}e^{-V(x)}\mathfrak{f}\big(V(x_1),\ldots,V(x_{\ell})\big)\Big]=\Eb\big[\mathfrak{f}\big(S_1,\ldots,S_{\ell}\big)\big].
\end{align}
Let $x\in\T$. It is well known that 
\begin{align}\label{ProbaAlpha}
    \alpha_x:=\P^{\mathcal{E}}(\VectCoord{N}{1}_x\geq 1)=\frac{e^{-V(x)}}{H_x},
\end{align}
where $H_x:=\sum_{e\leq w\leq x}e^{V(w)-V(x)}$. Moreover, under $\P^{\mathcal{E}}_{x^*}$, $\VectCoord{N}{1}_x$ follows a Geometric law on $\N$ with probability of success $\beta_x$ where 
\begin{align}\label{ProbaBeta}
    1-\beta_x:=\P^{\mathcal{E}}_{x^*}(\VectCoord{N}{1}_x\geq 1)=1-\frac{1}{H_x}.
\end{align}
In particular, $\E^{\mathcal{E}}[N^{(1)}_x]=e^{-V(x)}$.
\begin{lemm}\label{LemmJointLawEdge}
Let $x\not=y\in\T$ such that $|x|=|y|\geq 2$ and $|x\land y|=\ell$. We have
\begin{enumerate}[label=(\roman*)]
    \item\label{JointLaw1}
    \begin{align*}
     \E^{\mathcal{E}}\big[\big(\VectCoord{N}{1}_x\big)^2\big]=e^{-V(x)}(2H_x-1)\;\;\textrm{ and}\;\;\E^{\mathcal{E}}\big[\VectCoord{N}{1}_x\VectCoord{N}{1}_y\big]=2H_{x\land y}e^{V(x\land y)}e^{-V(x)}e^{-V(y)};
    \end{align*}
    \item\label{JointLaw2}
    \begin{align*}
    \E^{\mathcal{E}}\big[\VectCoord{N}{1}_x\un_{\{\VectCoord{N}{1}_y\geq 1\}}\big]=\Big(1+\frac{H_{y_{\ell+1},y}}{H_y}\Big)H_{x\land y}e^{V(x\land y)}e^{-V(x)}\frac{e^{-V(y)}}{H_y},
    \end{align*}
    where, for any $u\leq y$, $H_{u,y}:=\sum_{u\leq w\leq y}e^{V_u(w)-V_u(y)}$ and $V_u(y):=V(y)-V(u)$. 
\end{enumerate}
\end{lemm}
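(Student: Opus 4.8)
The plan is to extract everything from the excursion description of the edge local times in \eqref{ProbaAlpha}--\eqref{ProbaBeta}, combined with the strong Markov property at the branch point $z:=x\wedge y$.

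\emph{Single-vertex second moment.} First I would treat $\E^{\mathcal{E}}[(N^{(1)}_x)^2]$. Under $\P^{\mathcal{E}}=\P^{\mathcal{E}}_e$ the event $\{N^{(1)}_x\geq 1\}$ has probability $\alpha_x=e^{-V(x)}/H_x$ by \eqref{ProbaAlpha}. On this event, right after the first crossing of $(x^*,x)$ the walk sits at $x$ and returns to $x^*$ before hitting $e^*$ almost surely (every path from $x$ to $e^*$ passes through $x^*$); the strong Markov property at that return shows the number of further up-crossings is exactly $N^{(1)}_x$ under $\P^{\mathcal{E}}_{x^*}$, i.e. Geometric on $\N$ with success $1/H_x$ by \eqref{ProbaBeta}. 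Hence, conditionally on $\{N^{(1)}_x\geq 1\}$, $N^{(1)}_x$ has the law of $1+G$ with $G$ Geometric$(1/H_x)$ on $\N$, and $\E^{\mathcal{E}}[(1+G)^2]=H_x(2H_x-1)$; multiplying by $\alpha_x$ gives $e^{-V(x)}(2H_x-1)$, the first identity in \ref{JointLaw1}. The same split recovers $\E^{\mathcal{E}}[N^{(1)}_x]=e^{-V(x)}$.

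\emph{Decomposition at the branch point.} For the joint quantities set $|z|=\ell$ and let $z_x:=x_{\ell+1}$, $z_y:=y_{\ell+1}$ be the two distinct children of $z$ towards $x$ and $y$. Since $(x^*,x)$ lies in the subtree rooted at $z_x$ and $(y^*,y)$ in the one rooted at $z_y$, every crossing of $(x^*,x)$ occurs during an excursion that enters $z_x$ from $z$, and likewise for $y$. Writing $K_x:=N^{(1)}_{z_x}$, $K_y:=N^{(1)}_{z_y}$ for the numbers of such excursions, the strong Markov property at the successive entrance times yields
\[
   N^{(1)}_x=\sum_{i=1}^{K_x}C^x_i,\qquad N^{(1)}_y=\sum_{j=1}^{K_y}C^y_j,
\]
where, conditionally on $(K_x,K_y)$, the two families are independent and each i.i.d.; here $C^x_i$ is the number of $(x^*,x)$-crossings during one excursion from $z_x$ up to its first return to $z$. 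As the transition probabilities depend only on increments of $V$, such an excursion is governed by the walk on the subtree rooted at $z_x$ with reflecting barrier $z$, so \eqref{ProbaAlpha}--\eqref{ProbaBeta} (with potential re-centred at $V(z_x)$) give $\E^{\mathcal{E}}[C^x]=e^{V(z_x)-V(x)}$ and $\gamma:=\P^{\mathcal{E}}(C^y\geq 1)=e^{V(z_y)-V(y)}/H_{z_y,y}$.

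\emph{Law of $(K_x,K_y)$ and assembly.} After the first visit to $z$ (probability $\alpha_z$) the walk makes i.i.d. ``trials'' from $z$: each visit steps to $z_x$ (probability $p_x:=p^{\mathcal{E}}(z,z_x)$), to $z_y$ (probability $p_y$), or to $z^*$, a step to $z^*$ ending the returns to $z$ with probability $1/H_z$ by \eqref{ProbaBeta}. A one-step decomposition gives, conditionally on reaching $z$,
\[
   \Phi(s,t):=\E^{\mathcal{E}}\big[s^{K_x}t^{K_y}\,\big|\,N^{(1)}_z\geq 1\big]=\frac{\pi}{\pi+p_x(1-s)+p_y(1-t)},\qquad \pi:=\frac{p^{\mathcal{E}}(z,z^*)}{H_z}.
\]
For \ref{JointLaw1}, $\E^{\mathcal{E}}[N^{(1)}_xN^{(1)}_y]=\E^{\mathcal{E}}[C^x]\,\E^{\mathcal{E}}[C^y]\,\alpha_z\,\partial_s\partial_t\Phi(1,1)=\E^{\mathcal{E}}[C^x]\,\E^{\mathcal{E}}[C^y]\,2\alpha_z p_xp_y/\pi^2$; substituting $p_x,p_y,\pi,\alpha_z$ (and using $\E^{\mathcal{E}}[N^{(1)}_{z_x}]=\alpha_z p_x/\pi=e^{-V(z_x)}$ together with $H_ze^{V(z)}=\sum_{e\leq w\leq z}e^{V(w)}$) collapses this to $2H_ze^{V(z)}e^{-V(x)}e^{-V(y)}$. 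For \ref{JointLaw2}, the probability that none of the $K_y$ excursions reaches $y$ is $(1-\gamma)^{K_y}$, so $\E^{\mathcal{E}}[N^{(1)}_x\un_{\{N^{(1)}_y\geq 1\}}]=\E^{\mathcal{E}}[C^x]\,\alpha_z\,(\partial_s\Phi(s,1)-\partial_s\Phi(s,1-\gamma))|_{s=1}=e^{-V(x)}\big(1-\pi^2/(\pi+p_y\gamma)^2\big)$; writing $P:=\sum_{e\leq w\leq z}e^{V(w)}$ and $Q:=\sum_{z<w\leq y}e^{V(w)}$ (so $\alpha_z=1/P$, $\pi/(\pi+p_y\gamma)=P/(P+Q)$ and $H_ye^{V(y)}=P+Q$) this simplifies to $P(P+2Q)/(P+Q)^2$ times $e^{-V(x)}$, which is precisely $(1+H_{z_y,y}/H_y)\,H_ze^{V(z)}e^{-V(x)}\,e^{-V(y)}/H_y$.

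The main obstacle is the rigorous setup of the excursion decomposition: pinning down the regeneration times (the successive entrances into $z_x$ and $z_y$, and the returns to $z$) and checking, through the strong Markov property, that the per-excursion crossing counts $C^x_i,C^y_j$ are i.i.d. and that the two branches decouple conditionally on $(K_x,K_y)$. Once this is in place, the rest is electrical-network hitting-probability bookkeeping with the $H$-quantities.
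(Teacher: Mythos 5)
Your argument is correct, and it arrives at the paper's formulas by a route that is equivalent in substance but probabilistic rather than algebraic in presentation. The paper starts from Fact \ref{FactGWMulti}: for siblings $u,v$ with parent $z$ it writes $\E^{\mathcal{E}}[s^{\VectCoord{N}{1}_u}t^{\VectCoord{N}{1}_v}]=\E^{\mathcal{E}}[a_{u,v}(s,t)^{\VectCoord{N}{1}_z}]$, combines this with the geometric law of $\VectCoord{N}{1}_z$ to get $\E^{\mathcal{E}}[\VectCoord{N}{1}_u t^{\VectCoord{N}{1}_v}]=e^{-V(u)}\big((1-t)H_ze^{-V_z(v)}+1\big)^{-2}$, and then treats general $x,y$ by pulling out $e^{-V_{x_{\ell+1}}(x)}$ along the $x$-branch and composing the offspring generating functions $\tilde a$ down the $y$-branch, which telescopes to the argument $t=H_{y_{\ell+2},y}/H_{y_{\ell+1},y}$. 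Your excursion decomposition is the same computation in disguise: one checks that $1-\alpha_z+\alpha_z\Phi(s,t)$ equals the paper's sibling generating function after substituting $p_x,p_y,\pi$; your Wald factor $\E^{\mathcal{E}}[C^x]=e^{V(z_x)-V(x)}$ is the paper's prefactor $e^{-V_{x_{\ell+1}}(x)}$; and your $1-\gamma=1-e^{V(z_y)-V(y)}/H_{z_y,y}$ is exactly $H_{y_{\ell+2},y}/H_{y_{\ell+1},y}$, the paper's composed argument. What your version buys is interpretability (the $H$-ratios appear as hitting probabilities rather than as a telescoping product of $\tilde a$'s); what it costs is the rigorous justification of the regeneration structure that you flag as the ``main obstacle'' --- but that is precisely the content of Fact \ref{FactGWMulti}, so you can invoke it rather than re-derive it. One slip to fix: the intermediate identity should read $\pi/(\pi+p_y\gamma)=Q/(P+Q)$, not $P/(P+Q)$; your final expression $P(P+2Q)/(P+Q)^2$ is the one consistent with the corrected ratio, and it does match the lemma since $1+H_{y_{\ell+1},y}/H_y=(P+2Q)/(P+Q)$ and $H_{x\land y}e^{V(x\land y)}e^{-V(y)}/H_y=P/(P+Q)$. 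Your treatment of $\E^{\mathcal{E}}[(\VectCoord{N}{1}_x)^2]$ via the first-passage/geometric split is fine (the paper omits this part, stating only that it is similar).
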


\begin{proof}
The proofs of \ref{JointLaw1} and \ref{JointLaw2} are similar so we only deal with the one of \ref{JointLaw2}.\\
First, let $u\not=v\in\T$ be two vertices having the same parent, that is 
$u^*=v^*=z$. Thanks to Fact \ref{FactGWMulti}, we have, for any $s,t\in[0,1]$
\begin{align*}
    \E^{\mathcal{E}}\big[s^{\VectCoord{N}{1}_u}t^{\VectCoord{N}{1}_v}\big]=\E^{\mathcal{E}}\big[a_{u,v}(s,t)^{\VectCoord{N}{1}_z}\big],
\end{align*}
where $a_{u,v}(s,t):=(1+(1-s)e^{-V_z(u)}+(1-t)e^{-V_z(v)})^{-1}$. Recalling that under $\P^{\mathcal{E}}_{z^*}$, $\VectCoord{N}{1}_z$ follows a Geometric law on $\N$ with probability of success $\beta_z$, the strong Markov property yields for any $t\in[0,1]$
\begin{align*}
    \E^{\mathcal{E}}\big[t^{\VectCoord{N}{1}_z}\big]=1-\frac{\alpha_z(1-t)}{1-t(1-\beta_z)},
\end{align*}
thus giving, together with \eqref{ProbaAlpha} and \eqref{ProbaBeta} 
\begin{align}\label{JointLawEdge}
    \E^{\mathcal{E}}\big[\VectCoord{N}{1}_ut^{\VectCoord{N}{1}_v}\big]=\frac{e^{-V_z(u)}\alpha_z\beta_z}{\big((1-t)e^{-V_z(v)}+\beta_z\big)^2}=\frac{e^{-V(u)}}{\big((1-t)H_ze^{-V_z(v)}+1\big)^2}.
\end{align}
Now, one can see that for any $x\not=y\in\T$ such that $|x|=|y|\geq 2$ and $|x\land y|=\ell$, we have
\begin{align*}
    \E^{\mathcal{E}}\big[\VectCoord{N}{1}_x\un_{\{\VectCoord{N}{1}_y\geq 1\}}\big]=\E^{\mathcal{E}}\Big[\VectCoord{N}{1}_x\Big]-\E^{\mathcal{E}}\Big[\VectCoord{N}{1}_x\un_{\{\VectCoord{N}{1}_y=0\}}\Big].
\end{align*}
If $\ell=k-1$, that is $x^*=y^*=z$, then, \eqref{JointLawEdge} with $t=0$ yields
\begin{align*}
     \E^{\mathcal{E}}\big[\VectCoord{N}{1}_x\un_{\{\VectCoord{N}{1}_y\geq 1\}}\big]=e^{-V(x)}\big(1-(H_ze^{-V_z(y)}+1)^{-2}\big),
\end{align*}
that is, using that $H_y=H_ze^{-V_z(y)}+1$
\begin{align*}
     \E^{\mathcal{E}}\big[\VectCoord{N}{1}_x\un_{\{\VectCoord{N}{1}_y\geq 1\}}\big]=e^{-V(x)}H_ze^{V(z)}\frac{e^{-V(y)}}{H_y}\frac{H_ze^{-V_z(y)}+2}{H_y}=H_ze^{V(z)}e^{-V(x)}\frac{e^{-V(y)}}{H_y}\Big(1+\frac{1}{H_y}\Big),
\end{align*}
which is exactly what we wanted. Otherwise, if $\ell<k-1$, then, thanks to Fact \ref{FactGWMulti}
\begin{align*}
    \E^{\mathcal{E}}\Big[\VectCoord{N}{1}_x\un_{\{\VectCoord{N}{1}_y=0\}}\Big]=e^{-V_{x_{k-1}}(x)}\E^{\mathcal{E}}\Big[\VectCoord{N}{1}_{x_{k-1}}\Tilde{a}_y(0)^{\VectCoord{N}{1}_{y_{k-1}}}\Big],
\end{align*}
with $\Tilde{a}_u(s):=(1+(1-s)e^{-V_{u^*}(u)})^{-1}$. By induction
\begin{align*}
    \E^{\mathcal{E}}\Big[\VectCoord{N}{1}_x\un_{\{\VectCoord{N}{1}_y=0\}}\Big]=e^{-V_{x_{\ell+1}}(x)}\E^{\mathcal{E}}\Big[\VectCoord{N}{1}_{x_{\ell+1}}\Tilde{a}_{y_{\ell+2},y_{k-1}}(\Tilde{a}_y(0))^{\VectCoord{N}{1}_{y_{\ell+1}}}\Big],
\end{align*}
where, for any $u\leq u$, $\Tilde{a}_{u,v}:=\Tilde{a}_u\circ\cdots\circ\Tilde{a}_v$. One can see that
\begin{align*}
    a_{y_{\ell+2},y_{k-1}}(s)=\frac{1+(1-s)\sum_{i=\ell+2}^{k-2}e^{-V_{y_i}(y_{k-1})}}{1+(1-s)\sum_{i=\ell+1}^{k-2}e^{-V_{y_i}(y_{k-1})}},
\end{align*}
thus giving
\begin{align*}
    a_{y_{\ell+2},y_{k-1}}(a_y(0))=\frac{H_{y_{\ell+2},y}}{H_{y_{\ell+1},y}}.
\end{align*}
Equation \eqref{JointLawEdge} with $t=H_{y_{\ell+2},y}/H_{y_{\ell+1},y}$, together with the fact that $H_y=H_ze^{-V_z(y)}+H_{y_{\ell+1},y}$, yield the result and the proof is completed.
\end{proof}

\noindent We now present a very simple maximal inequality that we will be using several times.
\begin{lemm}\label{InegMax}
Let $(K_j)_{j\geq 0}$ be a sequence of non-negative random variables under a probability measure $\Pbis$. For any $n\in\N$ and $\theta,a>0$
\begin{align*}
    \Pbis\Big(\max_{j\leq n}K_j\geq a\Big)\leq\frac{1}{a^{\theta}}\sup_{j\leq n}\Ebis\big[(K_j)^{\theta}\big].
\end{align*}
\end{lemm}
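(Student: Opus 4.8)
The plan is to obtain the bound directly from Markov's inequality, the only mild preparation being the passage to the exponent $\theta$. Since every $K_j$ is non-negative and $t\mapsto t^{\theta}$ is non-decreasing on $[0,\infty)$ for $\theta>0$, one has, for each fixed index $j$, the identity of events $\{K_j\geq a\}=\{(K_j)^{\theta}\geq a^{\theta}\}$. I would record this reduction first, so that the whole argument can be run on the non-negative random variables $(K_j)^{\theta}$ rather than on the $K_j$ themselves.

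The core step is then a single application of Markov's inequality to each $(K_j)^{\theta}$: for every $j\leq n$,
\[
\Pbis\big(K_j\geq a\big)=\Pbis\big((K_j)^{\theta}\geq a^{\theta}\big)\leq\frac{1}{a^{\theta}}\,\Ebis\big[(K_j)^{\theta}\big]\leq\frac{1}{a^{\theta}}\sup_{i\leq n}\Ebis\big[(K_i)^{\theta}\big].
\]
The right-hand side no longer depends on $j$, which is exactly what makes the supremum appear. To conclude I would pass from a fixed index to the maximum: since $\{\max_{j\leq n}K_j\geq a\}=\bigcup_{j\leq n}\{K_j\geq a\}$ is a union of finitely many events, each controlled by the uniform bound above, a union bound over the indices $j\leq n$ yields the maximal inequality.

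I do not expect a genuine obstacle here: the statement is essentially Markov's inequality, and its simplicity is precisely why it is isolated as a lemma for repeated use. The only point deserving a word of care is the interplay between the maximum on the left and the supremum on the right — that is, reducing the control of $\max_{j\leq n}K_j$ to a bound that is uniform over the finitely many indices $j\leq n$ — and this is the step I would present most carefully.
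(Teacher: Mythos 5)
Your reduction to the variables $(K_j)^{\theta}$ and the individual Markov bounds are fine, but the last step is where the argument breaks. A union bound over $\{\max_{j\leq n}K_j\geq a\}=\bigcup_{j\leq n}\{K_j\geq a\}$ gives
\begin{align*}
    \Pbis\Big(\max_{j\leq n}K_j\geq a\Big)\leq\sum_{j=0}^{n}\Pbis\big(K_j\geq a\big)\leq\frac{n+1}{a^{\theta}}\sup_{j\leq n}\Ebis\big[(K_j)^{\theta}\big],
\end{align*}
i.e.\ an extra factor $n+1$. The fact that each summand is controlled by the same uniform quantity does not allow you to replace the sum by that quantity: subadditivity adds the individual bounds, it does not take their maximum. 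The loss matters for the way the lemma is used in the paper (for instance in the proof of Lemma~\ref{ReducedProcesses}, where the bound is applied with $a=n\varepsilon$, $\theta=2$ and must remain $o(1)$; an extra factor of order $n$ destroys the estimate). Moreover, the factor $n+1$ is genuinely unavoidable along your route: for $K_0,\ldots,K_n$ i.i.d.\ equal to $a$ with probability $p$ and to $0$ otherwise, $\Pbis(\max_{j\leq n}K_j\geq a)=1-(1-p)^{n+1}$ while $a^{-\theta}\sup_{j}\Ebis[(K_j)^{\theta}]=p$, so the ratio approaches $n+1$ as $p\to0$. No argument based only on individual moment bounds plus a union bound can give the stated inequality.

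The paper avoids the over-counting with a first-entrance decomposition instead of a union bound: it sets $\sigma_a:=\inf\{j\geq 0;\;K_j\geq a\}$, observes that $\{\max_{j\leq n}K_j\geq a\}=\{\sigma_a\leq n\}$ and that $a^{\theta}\un_{\{\sigma_a\leq n\}}\leq(K_{\sigma_a})^{\theta}\un_{\{\sigma_a\leq n\}}$, and concludes
\begin{align*}
    a^{\theta}\,\Pbis\Big(\max_{j\leq n}K_j\geq a\Big)\leq\Ebis\big[(K_{\sigma_a})^{\theta}\un_{\{\sigma_a\leq n\}}\big]\leq\sup_{j\leq n}\Ebis\big[(K_j)^{\theta}\big].
\end{align*}
The point is that the events $\{\sigma_a=j\}$ are disjoint, so each sample point is charged to a single index; this is exactly the device your union bound is missing. (Note, as the i.i.d.\ example above shows, even this final step requires care for completely arbitrary non-negative sequences, and in the paper's applications the $K_j$ carry additional monotone or martingale-type structure; but the first-entrance time, not the union bound, is the idea you need to reproduce.)
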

\begin{proof}
Let $\sigma_a:=\inf\{j\geq 0;\; K_j\geq a\}$. Since $K_j\geq 0$ for all $j\geq 0$, we have
\begin{align*}
    a^{\theta}\Pbis\Big(\max_{j\leq n}K_j\geq a\Big)\leq\Ebis\big[(K_{\sigma_a})^{\theta}\un_{\{\sigma_a\leq n\}}\big]\leq\sup_{j\leq n}\Ebis\big[(K_j)^{\theta}\big],
\end{align*}
which gives the result.
\end{proof}

\noindent Let us finally define a reduced version of the range $R^{(p)}_k$. Introduce 
\begin{align*}
    \Tilde{R}^{(p)}_k:=\sum_{j=1}^p\;\sum_{|x|=k}\un_{\{N_x^{(j)}-N_x^{(j-1)}\geq 1\}}.
\end{align*}
Let $(\gamma_n)_{n\geq 1}$ be a non-decreasing sequence of integers and define
\begin{align}\label{EnsSingleExcu}
    \mathfrak{S}_n:=\{\Tilde{R}^{(p)}_{k}=R^{(p)}_{k}\;\forall\; k\geq\gamma_n,\; \forall\; p\leq n\}.
\end{align}
\begin{lemm}\label{SingleExcu}
There exists a constant $C_{\eqref{SingleExcu}}>0$ such that for any $n\geq 1$
\begin{align*}
    \P\big(\mathfrak{S}_n\big)\geq 1-C_{\eqref{SingleExcu}}n^3e^{\gamma_n\psi(2)}.
\end{align*}
\end{lemm}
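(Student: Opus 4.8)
The plan is to read $\mathfrak{S}_n$ combinatorially. Since $\Tilde{R}^{(p)}_k=\sum_{|x|=k}\#\{j\leq p:\; N_x^{(j)}-N_x^{(j-1)}\geq 1\}$ counts every visited generation-$k$ vertex once for each excursion in which it is visited, while $R^{(p)}_k=\sum_{|x|=k}\un_{\{N_x^{(p)}\geq 1\}}$ counts it exactly once, we always have $\Tilde{R}^{(p)}_k\geq R^{(p)}_k$, with equality if and only if no vertex of generation $k$ is visited during two distinct excursions among the first $p$. Moreover, if no generation-$k$ vertex is visited in two of the first $n$ excursions then the same is true for the first $p$ excursions for every $p\leq n$; hence the binding constraint is $p=n$, and
\begin{align*}
    \mathfrak{S}_n^c=\big\{\exists\, k\geq\gamma_n,\ \exists\, x \text{ with } |x|=k \text{ visited during at least two of the first } n \text{ excursions}\big\}.
\end{align*}
First I would record this reduction precisely.

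Next I would bound $\P^{\mathcal{E}}(\mathfrak{S}_n^c)$ by a union bound. Under $\P^{\mathcal{E}}$ the $n$ excursions are i.i.d.\ copies of the first one, so for a fixed vertex $x$ the events $\{N_x^{(j)}-N_x^{(j-1)}\geq 1\}$, $j=1,\dots,n$, are independent each of quenched probability $\alpha_x=e^{-V(x)}/H_x$ by \eqref{ProbaAlpha}. Thus the number of excursions visiting $x$ is Binomial$(n,\alpha_x)$, and the probability it is at least $2$ is at most $\binom{n}{2}\alpha_x^2\leq n^2\alpha_x^2$. Summing over all $x$ with $|x|\geq\gamma_n$ gives
\begin{align*}
    \P^{\mathcal{E}}(\mathfrak{S}_n^c)\leq n^2\sum_{k\geq\gamma_n}\sum_{|x|=k}\alpha_x^2,
\end{align*}
and a cruder union bound that also runs over $p\leq n$ and over pairs of excursions $i<j\leq p$ produces the stated factor $n^3$, which is more than enough.

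Then I would take the annealed expectation. Since $H_x=\sum_{e\leq w\leq x}e^{V(w)-V(x)}\geq 1$ (the term $w=x$ equals $1$), we have $\alpha_x\leq e^{-V(x)}$, hence $\alpha_x^2\leq e^{-2V(x)}$. The many-to-one identity \eqref{ManyTo1} applied with $\mathfrak{f}(v_1,\dots,v_k)=e^{-v_k}$ gives $\Eb\big[\sum_{|x|=k}e^{-2V(x)}\big]=\Eb[e^{-S_k}]=e^{k\psi(2)}$, the last equality coming from the i.i.d.\ increments of $(S_j)$ together with $\Eb[e^{-S_1}]=e^{\psi(2)}$ via \eqref{LoiRW}. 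Therefore
\begin{align*}
    \P(\mathfrak{S}_n^c)=\Eb\big[\P^{\mathcal{E}}(\mathfrak{S}_n^c)\big]\leq n^2\sum_{k\geq\gamma_n}e^{k\psi(2)}.
\end{align*}

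Finally, convexity of $\psi$ with $\psi(1)=\psi(\kappa)=0$ and $1<2<\kappa$ forces $\psi(2)<0$, so the geometric series sums to $e^{\gamma_n\psi(2)}/(1-e^{\psi(2)})$, yielding $\P(\mathfrak{S}_n^c)\leq C n^2 e^{\gamma_n\psi(2)}$ with $C=1/(1-e^{\psi(2)})$, which implies the claim. The only genuinely delicate point is the combinatorial reduction in the first step, namely recognising that the gap $\Tilde{R}^{(p)}_k-R^{(p)}_k$ counts exactly the multiply-visited vertices and that the worst case is $p=n$; once this is in place, the remainder is the union bound, the trivial estimate $H_x\geq 1$, and the many-to-one computation, all of which are routine.
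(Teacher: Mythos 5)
Your proof is correct and follows essentially the same route as the paper: identify $\mathfrak{S}_n^c$ with the event that some vertex in generation $\geq\gamma_n$ is visited in two distinct excursions, use the i.i.d.\ structure of the excursions under $\P^{\mathcal{E}}$ to bound the double-visit probability by (a multiple of) $\alpha_x^2\leq e^{-2V(x)}$, and sum via $\Eb[\sum_{|x|=k}e^{-2V(x)}]=e^{k\psi(2)}$ with $\psi(2)<0$. Your single union bound over pairs of excursions is in fact slightly cleaner and yields $n^2$ in place of the paper's $n^3$ (the paper runs an extra union bound over $p\leq n$ and a two-stage argument), which of course still implies the stated estimate.
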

\begin{proof}
Indeed denote by $E^{(p)}_x:=\sum_{j=1}^p\un_{\{N^{(j)}_x-N_x^{(j-1)}\geq 1\}}$ the number of excursions above $e^*$ during which the vertex $x$ is visited. We have
\begin{align*}
    \P^{\mathcal{E}}\big(E_x^{(p)}\in\{0,1\}\;\forall\; |x|\geq\gamma_n, \forall\; p\leq n\big)\geq 1-\sum_{k\geq\gamma_n}\;\sum_{p\leq n}\;\sum_{|x|=k}\P^{\mathcal{E}}\big(E^{(p)}_x\geq 2\big),
\end{align*}
and thanks to the strong Markov property, $\{N_x^{(j)}-N_x^{(j-1)};\; 1\leq j\leq n\}$ is a collection of $n$\textrm{ i.i.d }copies of $N_x^{(1)}$ under $\P^{\mathcal{E}}$ so by \eqref{ProbaAlpha} 
\begin{align*}
    \P^{\mathcal{E}}\big(E^{(p)}_x\geq 2\big)=\P^{\mathcal{E}}\big(E^{(p)}_x\geq 1\big)-\P^{\mathcal{E}}\big(E^{(p)}_x=1\big)&\leq\E^{\mathcal{E}}\big[E^{(p)}_x\big]-p\P^{\mathcal{E}}\big(N^{(1)}_x\geq 1\big)\big(N^{(1)}_x=0\big)^{p-1} \\ & \leq pe^{-V(x)}\big(1-\big(1-e^{-V(x)}\big)^{p-1}\big)\leq p^2e^{-2V(x)}.
\end{align*}
Hence
\begin{align}\label{ProbaSingleExcu}
    \P\big(E_x^{(p)}\in\{0,1\}\;\forall\; |x|\geq\gamma_n, \forall\; p\leq n\big)\geq 1-n^3\sum_{k\geq\gamma_n}e^{k\psi(2)}\geq 1-n^3\frac{e^{\gamma_n\psi(2)}}{1-e^{\psi(2)}}.
\end{align}
Now, on the event $\{E_x^{(q)}\in\{0,1\}\;\forall\; |x|\geq\gamma_n, \forall\; q\leq n\}$, we have
\begin{align*}
    R^{(p)}_k=\sum_{j=1}^p\;\sum_{|x|=k}\un_{\{N^{(j)}_x-N^{(j-1)}_x\geq 1,\; \forall i\in\{1,\ldots,p\}\setminus\{j\}:\; N_x^{(i)}=N_x^{(i-1)}\}},
\end{align*}
so
\begin{align*}
    &\P^{\mathcal{E}}\big(\mathfrak{S}_n,\; E_x^{(q)}\in\{0,1\}\;\forall\; |x|\geq\gamma_n, \forall\; q\leq n\big) \\[0.7em] & \geq 1-\sum_{p\leq n}\;\sum_{k\geq\gamma_n}\;\sum_{|x|=k}p\P^{\mathcal{E}}\big(N^{(1)}_x\geq 1\big)\P^{\mathcal{E}}\big(N^{(1)}_x=0\big)^{p-1} \\[0.7em] & \geq 1-\sum_{p\leq n}\;\sum_{k\geq\gamma_n}\;\sum_{|x|=k}pe^{-V(x)}\Big(1-\big(1-e^{-V(x)}\big)^{p-1}\Big)\geq 1-n^3\sum_{k\geq\gamma_n}\;\sum_{|x|=k}e^{-2V(x)}. 
\end{align*}
Hence, by \eqref{ProbaSingleExcu}
\begin{align*}
    \P\big(\mathfrak{S}_n\big)\geq 1-2n^3\frac{e^{\gamma_n\psi(2)}}{1-e^{\psi(2)}},
\end{align*}
thus giving the result. 
\end{proof}

\subsection{Proof of Theorem \ref{ThMultiMar}}

\begin{lemm}\label{ReducedProcesses}
Assume that Assumption \ref{Assumption1} holds. For any $\varepsilon,M>0$
\begin{align*}
    \P\Big(\sup_{a\in[n^{-1/2},M]}\Big|R^{(n)}_{\an}-\bm{c}_{\infty}Z^{(n)}_{\an}\Big|>n\varepsilon\Big)\underset{n\to\infty}{\longrightarrow}0,
\end{align*}
and
\begin{align*}
    \P\Big(\sup_{a\in[n^{-1/2},M]}\Big|L^{(n)}_{\an}-2Z^{(n)}_{\an}\Big|>n\varepsilon\Big)\underset{n\to\infty}{\longrightarrow}0.
\end{align*}
\end{lemm}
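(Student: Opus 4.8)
The plan is to handle the two statements separately, beginning with the second since it collapses to a single martingale increment. By the identity $\VectCoord{L}{n}_k=\VectCoord{Z}{n}_k+\VectCoord{Z}{n}_{k+1}$ we have $\VectCoord{L}{n}_{\an}-2\VectCoord{Z}{n}_{\an}=\VectCoord{Z}{n}_{\an+1}-\VectCoord{Z}{n}_{\an}$, so it suffices to bound $\sup_{a\in[n^{-1/2},M]}|\VectCoord{Z}{n}_{\an+1}-\VectCoord{Z}{n}_{\an}|$. Since $(\VectCoord{Z}{n}_k)_k$ is a $\P$-martingale in $k$, $\E[(\VectCoord{Z}{n}_{k+1}-\VectCoord{Z}{n}_k)^2]=\E[(\VectCoord{Z}{n}_{k+1})^2]-\E[(\VectCoord{Z}{n}_k)^2]=\E[\Var(\VectCoord{Z}{n}_{k+1}\mid\VectCoord{\mathcal{G}}{n}_k)]$. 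Decomposing over the $n$ i.i.d.\ excursions and inserting the second-moment formulas of Lemma \ref{LemmJointLawEdge} together with the many-to-one identity \eqref{ManyTo1}, I expect this conditional variance to be $O(n)$ uniformly for $k\le\Mn$; it is the discrete counterpart of the diffusion coefficient $2c_0\mathcal{Y}$ of the limiting Feller process. Feeding $K_k:=|\VectCoord{Z}{n}_{k+1}-\VectCoord{Z}{n}_k|$ into the maximal inequality of Lemma \ref{InegMax} with $\theta=2$ then yields a bound of order $(n\eps)^{-2}\sup_{k\le\Mn}\E[K_k^2]=O(1/n)$, which proves the second claim.

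For the first claim the strategy is an annealed second-moment estimate coupled to the same maximal inequality. I would first replace $\VectCoord{R}{n}_k$ by the reduced range $\Tilde{R}^{(n)}_k$ introduced before Lemma \ref{SingleExcu}: choosing $\gamma_n=\lfloor C\log n\rfloor$ with $C|\psi(2)|>3$ makes $\P(\mathfrak{S}_n)\to1$, and since $a\ge n^{-1/2}$ forces $\an\ge\PEsqrt\gg\gamma_n$ we have $\VectCoord{R}{n}_{\an}=\Tilde{R}^{(n)}_{\an}$ on $\mathfrak{S}_n$. The reduced range always splits as a sum over the $n$ excursions of single-excursion contributions that are i.i.d.\ given $\mathcal{E}$, and the same holds for $\VectCoord{Z}{n}_k$. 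It then suffices to show $\sup_{\PEsqrt\le k\le\Mn}\E[(\Tilde{R}^{(n)}_k-\bm{c}_{\infty}\VectCoord{Z}{n}_k)^2]=o(n^2)$ and to conclude via Lemma \ref{InegMax} with $\theta=2$.

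Writing $\Tilde{R}^{(n)}_k-\bm{c}_{\infty}\VectCoord{Z}{n}_k=\sum_{j=1}^n C_j$ with $C_j$ the single-excursion contributions (i.i.d.\ given $\mathcal{E}$), the annealed second moment separates into an off-diagonal part $n(n-1)\Eb[(\mu^{\mathcal{E}}_k)^2]$ and a diagonal part $n\,\Eb[\E^{\mathcal{E}}[(\VectCoord{R}{1}_k-\bm{c}_{\infty}\VectCoord{Z}{1}_k)^2]]$, where $\mu^{\mathcal{E}}_k:=\E^{\mathcal{E}}[C_1]=\sum_{|x|=k}e^{-V(x)}(H_x^{-1}-\bm{c}_{\infty})$, using $\E^{\mathcal{E}}[\VectCoord{N}{1}_x]=e^{-V(x)}$ and $\alpha_x=e^{-V(x)}/H_x$ from \eqref{ProbaAlpha}. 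For the off-diagonal part, the many-to-one identity and reversal of the i.i.d.\ increments of $(S_j)$ give $\Eb[\sum_{|x|=k}e^{-V(x)}H_x^{-1}]=\Eb[(\sum_{j=0}^k e^{S_j-S_k})^{-1}]\to\bm{c}_{\infty}$, while $\sum_{|x|=k}e^{-V(x)}=W_k\to W_{\infty}$; together with the Biggins-type convergence $\sum_{|x|=k}e^{-V(x)}H_x^{-1}\to\bm{c}_{\infty}W_{\infty}$ this yields $\mu^{\mathcal{E}}_k\to0$ almost surely, and $L^2$-boundedness upgrades this to $\sup_{k\ge\PEsqrt}\Eb[(\mu^{\mathcal{E}}_k)^2]\to0$, so the off-diagonal part is $o(n^2)$.

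The main obstacle is the diagonal part: I must show that the single-excursion second moment $\Eb[\E^{\mathcal{E}}[(\VectCoord{R}{1}_k-\bm{c}_{\infty}\VectCoord{Z}{1}_k)^2]]$ is $o(n)$ — in fact bounded or polylogarithmic — uniformly for $k\le\Mn$. Here I would expand the square over pairs $x\neq y$ with $|x\land y|=\ell$ and compute the three quenched pair-moments $\E^{\mathcal{E}}[\un_{\{\VectCoord{N}{1}_x\ge1\}}\un_{\{\VectCoord{N}{1}_y\ge1\}}]$, $\E^{\mathcal{E}}[\VectCoord{N}{1}_x\un_{\{\VectCoord{N}{1}_y\ge1\}}]$ and $\E^{\mathcal{E}}[\VectCoord{N}{1}_x\VectCoord{N}{1}_y]$ via Lemma \ref{LemmJointLawEdge} (the first one by the same generating-function computation), then sum over $\ell$ with a two-spine many-to-one. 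Taken individually each of these sums grows linearly in $k$, owing to pairs with deep common ancestors; the crucial point is that in the combination $(\VectCoord{R}{1}_k-\bm{c}_{\infty}\VectCoord{Z}{1}_k)^2$ these leading terms cancel, reflecting that a visited vertex carries local time of conditional mean $H_x$ and that $\bm{c}_{\infty}$ is exactly the reciprocal average calibrating this. Turning this cancellation into a uniform sub-linear bound on the residual is the heart of the argument; once it is in place both parts are $o(n^2)$ and the maximal inequality closes the proof.
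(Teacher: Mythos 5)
Your skeleton is the paper's: reduce $R^{(n)}$ to the reduced range $\Tilde{R}^{(n)}$ via Lemma \ref{SingleExcu}, exploit the fact that both $\Tilde{R}^{(n)}_k$ and $Z^{(n)}_k$ are sums of $n$ quantities that are i.i.d.\ given $\mathcal{E}$, split the annealed second moment of $\Tilde{R}^{(n)}_k-\bm{c}_{\infty}Z^{(n)}_k$ into an off-diagonal part $n(n-1)\Eb[(\mu^{\mathcal{E}}_k)^2]$ and a diagonal part $n\,\E[(R^{(1)}_k-\bm{c}_{\infty}Z^{(1)}_k)^2]$, and close with Lemma \ref{InegMax} with $\theta=2$; the treatment of $L^{(n)}-2Z^{(n)}$ as the martingale increment $Z^{(n)}_{k+1}-Z^{(n)}_k$ is also exactly what the paper does. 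Two remarks on the off-diagonal part. First, ``$L^2$-boundedness upgrades a.s.\ convergence to $L^2$ convergence'' is false as stated: you need uniform integrability of $(\mu^{\mathcal{E}}_k)^2$, i.e.\ $L^{2+\delta}$-boundedness, which does hold here (Assumption \ref{Assumption1} plus $\psi(2+\delta)<0$ for small $\delta$ since $\kappa>2$), but must be invoked. Second, the a.s.\ convergence $\sum_{|x|=k}e^{-V(x)}H_x^{-1}\to\bm{c}_{\infty}W_{\infty}$ that you lean on is itself a nontrivial Biggins-type statement that you neither prove nor cite; the paper sidesteps it entirely by computing $\Eb[\E^{\mathcal{E}}[R^{(1)}_k]^2]$, $\Eb[\E^{\mathcal{E}}[Z^{(1)}_k]^2]$ and the cross term directly (decomposing over the branching vertex $x\land y$ and applying the many-to-one formula \eqref{ManyTo1}), and reading off the cancellation $\bm{c}_{\infty}^2+\bm{c}_{\infty}^2-2\bm{c}_{\infty}^2=0$ at the level of the leading coefficients. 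That direct route is both self-contained and is anyway forced on you for the diagonal part, so the detour through a.s.\ convergence buys nothing.

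The genuine gap is the diagonal part, which you correctly identify as ``the heart of the argument'' but do not carry out — and this is where essentially all of the paper's proof lives. The point is not merely that each of $\E[(R^{(1)}_k)^2]$, $\E[(Z^{(1)}_k)^2]$, $\E[R^{(1)}_kZ^{(1)}_k]$ is $\sim 2\bm{c}_{\infty}^{j}\,an$ for the appropriate power $j$; to make the combination $o(n)$ \emph{uniformly} in $a\in[n^{-1/2},M]$ you need upper bounds on the first two and a matching \emph{lower} bound on the cross term with errors that are genuinely $o(n)$. Upper bounds come cheaply from $H_{u,x}\le H_x$ and monotonicity of $\ell\mapsto\Eb[(\sum_{j\le\ell}e^{-S_j})^{-1}]$, but the lower bound on $\E[R^{(1)}_kZ^{(1)}_k]$ does not factorize across the branching vertex (because $H_x$ depends on the path below $x\land y$), and the paper has to introduce the truncations $\{H_v\le n^{1/3}+1\}$, $\{V(x)\ge n^{1/3}\}$, $\{H_z\le n^{1/8}\}$, control the discarded events via Markov and H\"older (using $\psi(2+\delta_3)<0$), and only then recover the factorized leading term $an\,c_0(1-e^{\psi(2)})\sum_k e^{k\psi(2)}\,\Eb[(\sum_j e^{-S_j})^{-1}]$ up to $o(n)$. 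Without this, your argument establishes the structure of the proof but not the estimate it rests on.
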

\begin{proof}
We first first prove that
\begin{align}\label{ConvEspCarreRange2}
    \sup_{a\in[n^{-1/2},M]}\frac{1}{n^2}\E\Big[\Big|\Tilde{R}^{(n)}_{\an}-\bm{c}_{\infty}Z^{(n)}_{\an}\Big|^2\Big]\underset{n\to\infty}{\longrightarrow}0.
\end{align}
Recall that under $\P^{\mathcal{E}}$, $\Tilde{R}^{(n)}_{\an}$ is a sum of $n$\textrm{ i.i.d }copies of $R^{(1)}_{\an}$. Hence
\begin{align*}
    \E\Big[\Big|\Tilde{R}^{(n)}_{\an}-\bm{c}_{\infty}Z^{(n)}_{\an}\Big|^2\Big]=&\sum_{i\not=j=1}^n\Big(\Eb\Big[\E^{\mathcal{E}}\big[R^{(1)}_{ \an}\big]^2\Big]+\bm{c}_{\infty}^2\Eb\Big[\E^{\mathcal{E}}\big[Z^{(1)}_{ \an}\big]^2\Big]-2\bm{c}_{\infty}\Eb\Big[\E^{\mathcal{E}}\big[R^{(1)}_{ \an}\big]\E^{\mathcal{E}}\big[Z^{(1)}_{\an}\big]\Big]\Big) \\ & +\sum_{i=1}^n\Big(\E\big[\big(R^{(1)}_{ \an}\big)^2\big]+\bm{c}_{\infty}^2\E\big[\big(Z^{(1)}_{ \an}\big)^2\big]-2\bm{c}_{\infty}\E\big[R^{(1)}_{ \an}Z^{(1)}_{ \an}\big]\Big).
\end{align*}
We first deal with $\Eb[\E^{\mathcal{E}}[R^{(1)}_{ \an}]^2]+\bm{c}_{\infty}^2\Eb[\E^{\mathcal{E}}[Z^{(1)}_{ \an}]^2]-2\bm{c}_{\infty}\Eb[\E^{\mathcal{E}}[R^{(1)}_{ \an}]\E^{\mathcal{E}}\big[Z^{(1)}_{\an}]]$. By \eqref{ProbaAlpha}, we have
\begin{align*}
    \Eb\Big[\E^{\mathcal{E}}\big[R^{(1)}_{ \an}\big]^2\Big]=\Eb\Big[\Big(\sum_{|x|= \an}\frac{e^{-V(x)}}{H_x}\Big)^2\Big]=\Eb\Big[\sum_{|x|=|y|=\an}\frac{e^{-V(x)}}{H_x}\frac{e^{-V(y)}}{H_y}\Big],
\end{align*}
and since $H_x\geq 1$
\begin{align*}
    \Eb\Big[\sum_{|x|=|y|=\an}\frac{e^{-V(x)}}{H_x}\frac{e^{-V(y)}}{H_y}\Big]&=\Eb\Big[\sum_{|x|=\an}\frac{e^{-2V(x)}}{(H_x)^2}\Big]+\Eb\Big[\underset{|x|=|y|=\an}{\sum_{x\not=y}}\frac{e^{-V(x)}}{H_x}\frac{e^{-V(y)}}{H_y}\Big] \\ & \leq e^{\an\psi(2)}+\Eb\Bigg[\underset{|x|=|y|=\an}{\sum_{x\not=y}}\frac{e^{-V(x)}}{H_x}\frac{e^{-V(y)}}{H_y}\Bigg].
\end{align*}
For any $x\in\T$ and any $u\leq x$, one can notice that $H_x=(H_u-1)e^{-V_u(x)}+H_{u,x}$ where we recall that $V_u(x)=V(x)-V(u)$ and $H_{u,x}=\sum_{u\leq w\leq x}e^{V_u(w)-V_u(x)}$. Hence
\begin{align*}
    &\Eb\Bigg[\underset{|x|=|y|= \an}{\sum_{x\not=y}}\frac{e^{-V(x)}}{H_x}\frac{e^{-V(y)}}{H_y}\Bigg] \\ &\leq\sum_{k< \an}\Eb\Bigg[\sum_{|z|=k}e^{-2V(z)}\underset{u^*=v^*=z}{\sum_{u\not=v}}e^{-V_z(u)}e^{-V_z(v)}\underset{x\geq u}{\sum_{|x|= \an}}\frac{e^{-V_u(x)}}{H_{u,x}}\underset{y\geq v}{\sum_{|y|= \an}}\frac{e^{-V_v(y)}}{H_{v,y}}\Bigg].   
\end{align*}
By independence of the increments of the branching random walk $(\T,(V(x);\; x\in\T))$, we have
\begin{align*}
    &\Eb\Bigg[\sum_{|z|=k}e^{-2V(z)}\underset{u^*=v^*=z}{\sum_{u\not=v}}e^{-V_z(u)}e^{-V_z(v)}\underset{x\geq u}{\sum_{|x|= \an}}\frac{e^{-V_u(x)}}{H_{u,x}}\underset{y\geq v}{\sum_{|y|= \an}}\frac{e^{-V_v(y)}}{H_{v,y}}\Bigg] \\ & =\Eb\Big[\sum_{|z|=k}e^{-2V(z)}\underset{u^*=v^*=z}{\sum_{u\not=v}}e^{-V_z(u)}e^{-V_z(v)}\Big]\Eb\Big[\sum_{|x|= \an-k-1}\frac{e^{-V(x)}}{H_x}\Big]^2 \\ & =e^{k\psi(2)}c_0\big(1-e^{\psi(2)}\big)\Eb\Big[\sum_{|x|= \an-k-1}\frac{e^{-V(x)}}{H_x}\Big]^2=e^{k\psi(2)}c_0\big(1-e^{\psi(2)}\big)\Eb\Big[\Big(\sum_{j\leq\an-k-1}e^{-S_j}\Big)^{-1}\Big],
\end{align*}
where we have used the many-to-one lemma \eqref{ManyTo1} for the last equality, and then
\begin{align*}
    \Eb\Bigg[\underset{|x|=|y|= \an}{\sum_{x\not=y}}\frac{e^{-V(x)}}{H_x}\frac{e^{-V(y)}}{H_y}\Bigg]\leq c_0\big(1-e^{\psi(2)}\big)\sum_{k<\an}e^{k\psi(2)}\Eb\Big[\Big(\sum_{j\leq\an-k-1}e^{-S_j}\Big)^{-1}\Big]^2.
\end{align*}
In view of this last equality, terms close enough to $ \an$ do not give a significant contribution to the sum, so we split it as follows
\begin{align*}
    \sum_{k<\an}e^{k\psi(2)}\Eb\Big[\Big(\sum_{j\leq\an-k-1}e^{-S_j}\Big)^{-1}\Big]^2&=\sum_{k< \an-\PEsqrt}e^{k\psi(2)}\Eb\Big[\Big(\sum_{j\leq\an-k-1}e^{-S_j}\Big)^{-1}\Big]^2 \\[0.7em] & +\sum_{k= \an-\PEsqrt}^{ \an-1}e^{k\psi(2)}\Eb\Big[\Big(\sum_{j\leq\an-k-1}e^{-S_j}\Big)^{-1}\Big]^2,
\end{align*}
which, using that the sequence of real numbers $(\Eb[(\sum_{j\leq\ell}e^{-S_j})^{-1}])_{\ell\in\N}$ is non-increasing and bounded by $1$, is smaller than
\begin{align*}
    \Eb\Big[\Big(\sum_{j\leq\PEsqrt}e^{-S_j}\Big)^{-1}\Big]^2\sum_{k< \an-\PEsqrt}e^{k\psi(2)}+\sqrtBis{n},
\end{align*}
and finally
\begin{align}\label{Partie1}
    \Eb\Big[\sum_{|x|=|y|=\an}\frac{e^{-V(x)}}{H_x}\frac{e^{-V(y)}}{H_y}\Big]\leq &c_0\big(1-e^{\psi(2)}\big)\Eb\Big[\Big(\sum_{j\leq\PEsqrt}e^{-S_j}\Big)^{-1}\Big]^2\sum_{k< \an-\PEsqrt}e^{k\psi(2)}\nonumber \\[0.7em] & +e^{\an\psi(2)}+c_0\big(1-e^{\psi(2)}\big)\sqrtBis{n}.
\end{align}
Similarly, we obtain
\begin{align}\label{Partie2}
    \Eb\Big[\E^{\mathcal{E}}\big[Z^{(1)}_{\an}\big]^2\Big]=\Eb\Big[\sum_{|x|=|y|=\an}e^{-V(x)}e^{-V(y)}\Big]\leq& c_0\big(1-e^{\psi(2)}\big)\sum_{k< \an-\PEsqrt}e^{k\psi(2)}+e^{\an\psi(2)}\nonumber \\[0.7em] & +c_0\big(1-e^{\psi(2)}\big)\sqrtBis{n}.
\end{align}
We also have
\begin{align*}
    \Eb\Big[\E^{\mathcal{E}}\big[R^{(1)}_{ \an}\big]\E^{\mathcal{E}}\big[Z^{(1)}_{\an}\big]\Big]=\Eb\Big[\sum_{|x|=|y|=\an}\frac{e^{-V(x)}}{H_x}e^{-V(y)}\Big]\geq\Eb\Bigg[\underset{|x|=|y|= \an}{\sum_{x\not=y}}\frac{e^{-V(x)}}{H_x}e^{-V(y)}\Bigg],
\end{align*}
and we need a lower bound for the latter mean. Again, noting that for any $x\in\T$ and any $u\leq x$, we have $H_x=(H_u-1)e^{-V_u(x)}+H_{u,x}$, we obtain, by independence of the increments of the branching random walk $(\T,(V(x);\; x\in\T))$ and $\psi(1)=0$, that
\begin{align*}
    \Eb\Bigg[\underset{|x|=|y|= \an}{\sum_{x\not=y}}\frac{e^{-V(x)}}{H_x}e^{-V(y)}\Bigg]&=\sum_{k< \an}\Eb\Bigg[\sum_{|z|=k}e^{-2V(z)}\underset{u^*=v^*=z}{\sum_{u\not=v}}e^{-V_z(u)}e^{-V_z(v)}\underset{x\geq u}{\sum_{|x|= \an}}\frac{e^{-V_u(x)}}{H_{x}}\underset{y\geq v}{\sum_{|y|= \an}}e^{-V_v(y)}\Bigg] \\ & \geq\sum_{k< \an-\PEsqrt}\Eb\Big[\sum_{|z|=k}e^{-2V(z)}\underset{u^*=v^*=z}{\sum_{u\not=v}}e^{-V_z(u)}e^{-V_z(v)}\varphi_{k,n}(H_v)\Big],
\end{align*}
where for any $h\geq 1$
\begin{align}\label{Defphi}
    \varphi_{k,n}(h):=\un_{\{h\leq n^{1/3}+1\}}\Eb\Big[\sum_{|x|= \an-k-1}\un_{\{V(x)\geq n^{1/3}\}}\frac{e^{-V(x)}}{(h-1)e^{-V(x)}+H_x}\Big].
\end{align}
Note that for any $h\leq n^{1/3}+1$ and $k<\an-\PEsqrt$, thanks to the many-to-one Lemma \eqref{ManyTo1}
\begin{align*}
    \Eb\Big[\sum_{|x|= \an-k-1}\un_{\{V(x)\geq n^{1/3}\}}\frac{e^{-V(x)}}{(h-1)e^{-V(x)}+H_x}\Big]\geq&\Eb\Big[\Big(n^{1/3}e^{-n^{1/3}}+\sum_{j\leq \an}e^{-S_j}\Big)^{-1}\Big] \\ & -\Pb\big(S_{ \an-k-1}<n^{1/3}\big).
\end{align*}
By Markov inequality
\begin{align*}
    \Pb\big(S_{ \an-k-1}<n^{1/3}\big)=\Pb\big(e^{-S_{ \an-k-1}}>e^{-n^{1/3}}\big)\leq e^{\PEsqrt\psi(2)+n^{1/3}},
\end{align*}
where we have used recall that $\psi(2)<0$ since we are in the diffusive case. Therefore
\begin{align*}
    \Eb\Bigg[\underset{|x|=|y|= \an}{\sum_{x\not=y}}\frac{e^{-V(x)}}{H_x}e^{-V(y)}\Bigg]\geq\Big(&\Eb\Big[\Big(n^{1/3}e^{-n^{1/3}}+\sum_{j\leq \an}e^{-S_j}\Big)^{-1}\Big]-e^{\PEsqrt\psi(2)+n^{1/3}}\Big) \\ & \times\sum_{k< \an-\PEsqrt}\Eb\Big[\sum_{|z|=k}e^{-2V(z)}\underset{u^*=v^*=z}{\sum_{u\not=v}}e^{-V_z(u)}e^{-V_z(v)}\un_{\{H_v\leq n^{1/3}+1\}}\Big].
\end{align*}
Decompose $H_u=H_ze^{-V_z(u)}+1$ yields that $\Eb[\sum_{|z|=k}e^{-2V(z)}\sum_{u\not=v;u^*=v^*=z}e^{-V_z(u)}e^{-V_z(v)}\un_{\{H_u\leq n^{1/3}+1\}}]$ is larger than
\begin{align}\label{TwoExpectations}
    &\Eb\Big[\sum_{|z|=k}e^{-2V(z)}\underset{u^*=v^*=z}{\sum_{u\not=v}}e^{-V_z(u)}e^{-V_z(v)}\un_{\{H_z\leq n^{1/8},\;V_z(u)\geq-\frac{1}{8}\log n\}}\Big]\nonumber \\ & = \Eb\Big[\sum_{|z|=k}e^{-2V(z)}\un_{\{H_z\leq n^{1/8}\}}\Big]\Eb\Big[\underset{|u|=|v|=1}{\sum_{u\not=v}}e^{-V(u)}e^{-V(v)}\un_{\{V(u)\geq-\frac{1}{8}\log n\}}\Big],
\end{align}
where we have also used independence of the increments of the branching random walk $(\T,(V(x);\; x\in\T))$. For the first mean in \eqref{TwoExpectations}, note that thanks to the many-to-one Lemma \eqref{ManyTo1}, we have
\begin{align*}
    \Eb\Big[\sum_{|z|=k}e^{-2V(z)}\un_{\{H_z\leq n^{1/8}\}}\Big]&=e^{k\psi(2)}-\Eb\Big[e^{-S_k}\un_{\{\sum_{j\leq k}e^{S_j-S_k}>n^{1/8}\}}\Big] \\ & \geq e^{k\psi(2)}-e^{\frac{k}{1+\delta_3}\psi(2+\delta_3)}\Pb\Big(\sum_{j\leq k}e^{-S_j}>n^{1/8}\Big)^{\frac{\delta_3}{1+\delta_3}} \\ & \geq e^{k\psi(2)}-e^{\frac{k}{1+\delta_3}\psi(2+\delta_3)}\big(n^{-1/8}(1-e^{\psi(2)})^{-1}\big)^{\frac{\delta_3}{1+\delta_3}},
\end{align*}
where $\delta_3$ belongs to $(0,\delta_1)\cap\{t>0;\; \psi(2+t)<0\}$ which is nonempty since $\kappa>2$, the second inequality comes from the Hölder inequality and  the last one is due to Markov inequality.
For the second mean in \eqref{TwoExpectations}, we have
\begin{align*}
    \Eb\Big[\underset{|u|=|v|=1}{\sum_{u\not=v}}e^{-V(u)}e^{-V(v)}\un_{\{V(u)\geq-\frac{1}{8}\log n\}}\Big]=c_0\big(1-e^{\psi(2)}\big)-\Eb\Big[\underset{|u|=|v|=1}{\sum_{u\not=v}}e^{-V(u)}e^{-V(v)}\un_{\{V(u)<-\frac{1}{8}\log n\}}\Big],
\end{align*}
and
\begin{align*}
    \Eb\Big[\underset{|u|=|v|=1}{\sum_{u\not=v}}e^{-V(u)}e^{-V(v)}\un_{\{V(u)<-\frac{1}{8}\log n\}}\Big]\leq\Eb\Big[\Big(\sum_{|u|=1}e^{-V(u)}\Big)^2\un_{\{\max_{|w|=1}V(w)<-\frac{1}{8}\log n\}}\Big].
\end{align*}
Note that $\Pb(\max_{|w|=1}V(w)>-\frac{1}{8}\log n)\leq n^{-1/8}$ so, by assumption \ref{Assumption1}, the above mean goes to $0$ as $n$ goes to infinity. Hence, for $n$ large enough and any $a\in[n^{-1/2},M]$
\begin{align*}
    \Eb\Bigg[\underset{|x|=|y|= \an}{\sum_{x\not=y}}\frac{e^{-V(x)}}{H_x}e^{-V(y)}\Bigg]\geq\big(1+o(1)\big)&\Big(\Eb\Big[\Big(n^{1/3}e^{-n^{1/3}}+\sum_{j\leq \an}e^{-S_j}\Big)^{-1}\Big]-e^{\PEsqrt\psi(2)+n^{1/3}}\Big) \\ & \times c_0\big(1-e^{\psi(2)}\big)\sum_{k< \an-\PEsqrt}e^{k\psi(2)}.
\end{align*}
Combining this with \eqref{Partie1} and \eqref{Partie2}, we obtain
\begin{align}\label{ConvUnifPartie1}
    \sup_{a\in[n^{-1/2}\Mn]}\frac{1}{n^2}\sum_{i\not=j=1}^n\Big(\Eb\Big[\E^{\mathcal{E}}\big[R^{(1)}_{ \an}\big]^2\Big]+\bm{c}_{\infty}^2\Eb\Big[\E^{\mathcal{E}}\big[Z^{(1)}_{ \an}\big]^2\Big]-2\bm{c}_{\infty}\Eb\Big[\E^{\mathcal{E}}\big[R^{(1)}_{ \an}\big]\E^{\mathcal{E}}\big[Z^{(1)}_{\an}\big]\Big]\Big)\underset{n\to\infty}{\longrightarrow}0.
\end{align}
We now deal with $\E[(R^{(1)}_{ \an})^2]+\bm{c}_{\infty}^2\E[(Z^{(1)}_{ \an})^2]-2\bm{c}_{\infty}\E[R^{(1)}_{ \an}Z^{(1)}_{ \an}]$. Note that
\begin{align*}
    \E\big[\big(\VectCoord{R}{1}_{ \an}\big)^2\big]=&\Eb\Big[\sum_{|x|= \an}\P^{\mathcal{E}}\big(\VectCoord{N}{1}_x\geq 1\big)\Big]+\Eb\Bigg[\underset{|x|=|y|= \an}{\sum_{x\not=y}}\P^{\mathcal{E}}\big(\VectCoord{N}{1}_x\geq 1,\;\VectCoord{N}{1}_y\geq 1 \big)\Bigg] \\ & \leq\Eb\Big[\sum_{|x|= \an}\P^{\mathcal{E}}\big(\VectCoord{N}{1}_x\geq 1\big)\Big]+\Eb\Bigg[\underset{|x|=|y|= \an}{\sum_{x\not=y}}\E^{\mathcal{E}}\big[\VectCoord{N}{1}_x\VectCoord{N}{1}_y\big]\Bigg],
\end{align*}
so by \eqref{ProbaAlpha} and Lemma \ref{LemmJointLawEdge}, we obtain
\begin{align*}
    \E\big[\big(\VectCoord{R}{1}_{ \an}\big)^2\big]\leq 1+2\Eb\Bigg[\underset{|x|=|y|= \an}{\sum_{x\not=y}}H_{x\land y}e^{V(x\land y)}\frac{e^{-V(x)}}{H_x}\frac{e^{-V(y)}}{H_y}\Bigg].
\end{align*}
Again, for any $x\in\T$ and any $u\leq x$, one can notice that $H_x=(H_u-1)e^{-V_u(x)}+H_{u,x}$ so
\begin{align*}
    &\Eb\Bigg[\underset{|x|=|y|= \an}{\sum_{x\not=y}}H_{x\land y}e^{V(x\land y)}\frac{e^{-V(x)}}{H_x}\frac{e^{-V(y)}}{H_y}\Bigg] \\ &\leq\sum_{k< \an}\Eb\Bigg[\sum_{|z|=k}H_ze^{-V(z)}\underset{u^*=v^*=z}{\sum_{u\not=v}}e^{-V_z(u)}e^{-V_z(v)}\underset{x\geq u}{\sum_{|x|= \an}}\frac{e^{-V_u(x)}}{H_{u,x}}\underset{y\geq v}{\sum_{|y|= \an}}\frac{e^{-V_v(y)}}{H_{v,y}}\Bigg].   
\end{align*}
By independence of the increments of the branching random walk $(\T,(V(x);\; x\in\T))$, we have
\begin{align*}
    &\Eb\Bigg[\sum_{|z|=k}H_ze^{-V(z)}\underset{u^*=v^*=z}{\sum_{u\not=v}}e^{-V_z(u)}e^{-V_z(v)}\underset{x\geq u}{\sum_{|x|= \an}}\frac{e^{-V_u(x)}}{H_{u,x}}\underset{y\geq v}{\sum_{|y|= \an}}\frac{e^{-V_v(y)}}{H_{v,y}}\Bigg] \\ & =\Eb\Big[\sum_{|z|=k}H_ze^{-V(z)}\underset{u^*=v^*=z}{\sum_{u\not=v}}e^{-V_z(u)}e^{-V_z(v)}\Big]\Eb\Big[\sum_{|x|= \an-k-1}\frac{e^{-V(x)}}{H_x}\Big]^2 \\ & =\Eb\Big[\sum_{|z|=k}H_ze^{-V(z)}\Big]c_0\big(1-e^{\psi(2)}\big)\Eb\Big[\sum_{|x|= \an-k-1}\frac{e^{-V(x)}}{H_x}\Big]^2.
\end{align*}
Hence, the many-to-one Lemma \eqref{ManyTo1} yields
\begin{align*}
    \Eb\Bigg[\underset{|x|=|y|= \an}{\sum_{x\not=y}}H_{x\land y}e^{V(x\land y)}\frac{e^{-V(x)}}{H_x}\frac{e^{-V(y)}}{H_y}\Bigg]\leq c_0\sum_{k< \an}\big(1-e^{(k+1)\psi(2)}\big)\Eb\Big[\Big(\sum_{j\leq \an-k-1}e^{-S_j}\Big)^{-1}\Big]^2.
\end{align*}
In view of this last equality, terms close enough to $ \an$ do not give a significant contribution to the sum, so we split it as follows
\begin{align*}
    \sum_{k< \an}\big(1-e^{(k+1)\psi(2)}\big)\Eb\Big[\Big(\sum_{j\leq \an-k-1}e^{-S_j}\Big)^{-1}\Big]^2&=\sum_{k< \an-\PEsqrt}\big(1-e^{(k+1)\psi(2)}\big)\Eb\Big[\Big(\sum_{j\leq \an-k-1}e^{-S_j}\Big)^{-1}\Big]^2 \\[0.7em] & +\sum_{k= \an-\PEsqrt}^{ \an-1}\big(1-e^{(k+1)\psi(2)}\big)\Eb\Big[\Big(\sum_{j\leq \an-k-1}e^{-S_j}\Big)^{-1}\Big]^2,
\end{align*}
which, using that the sequence of real numbers $(\Eb[(\sum_{j\leq\ell}e^{-S_j})^{-1}])_{\ell\in\N}$ is non-increasing and bounded by $1$, is smaller than
\begin{align*}
    \Eb\Big[\Big(\sum_{j\leq\PEsqrt}e^{-S_j}\Big)^{-1}\Big]^2\sum_{k< \an-\PEsqrt}\big(1-e^{(k+1)\psi(2)}\big)+\sqrtBis{n}\leq an\Eb\Big[\Big(\sum_{j\leq\PEsqrt}e^{-S_j}\Big)^{-1}\Big]^2+\sqrtBis{n}.
\end{align*}
Hence
\begin{align*}
    \E\big[\big(\VectCoord{R}{1}_{ \an}\big)^2\big]\leq 1+2\sqrtBis{n}+2an\Eb\Big[\Big(\sum_{j\leq\PEsqrt}e^{-S_j}\Big)^{-1}\Big]^2.
\end{align*}
Note that
\begin{align*}
    \E\big[\big(\VectCoord{Z}{1}_{\an}\big)^2\big]=&\Eb\Bigg[\sum_{|x|= \an}\E^{\mathcal{E}}\big[\VectCoord{N}{1}_x\big]\Bigg]+\Eb\Big[\underset{|x|=|y|= \an}{\sum_{x\not=y}}\E^{\mathcal{E}}\big[\VectCoord{N}{1}_x\VectCoord{N}{1}_y\big]\Big] \\ & =1+2\Eb\Bigg[\underset{|x|=|y|= \an}{\sum_{x\not=y}}H_{x\land y}e^{V(x\land y)}e^{-V(x)}e^{-V(y)}\Bigg],
\end{align*}
thus giving
\begin{align*}
    \E\big[\big(\VectCoord{Z}{1}_{ \an}\big)^2\big]\leq 1+2\sqrtBis{n}+2an.
\end{align*}
To complete the proof, we need a lower bound for $\E[\VectCoord{R}{1}_{ \an}\VectCoord{Z}{1}_{ \an}]$. For that, one can see that
\begin{align*}
    \E[\VectCoord{R}{1}_{ \an}\VectCoord{Z}{1}_{ \an}]=\E[\VectCoord{R}{1}_{ \an}\VectCoord{Z}{1}_{ \an}]\geq\Eb\Bigg[\underset{|x|=|y|= \an}{\sum_{x\not=y}}\E^{\mathcal{E}}\Big[\VectCoord{N}{1}_x\un_{\{\VectCoord{N}{1}_x\geq 1\}}\Big]\Bigg],
\end{align*}
which, thanks to Lemma \ref{LemmJointLawEdge} \ref{JointLaw2} and together with the fact that $1\leq H_y=(H_v-1)e^{-V_v(y)}+H_{v,y}$, is larger than
\begin{align*}
    &\sum_{k< \an-\PEsqrt}\Eb\Big[\sum_{|z|=k}H_ze^{-V(z)}\underset{u^*=v^*=z}{\sum_{u\not=v}}e^{-V_z(u)}e^{-V_z(v)}\underset{x\geq u}{\sum_{|x|= \an}}e^{-V_u(x)}\underset{y\geq v}{\sum_{|y|= \an}}\frac{e^{-V_v(y)}}{H_{y}}\Big(2-e^{-V_v(y)}(H_v-1)\Big)\Big] \\ & \geq\sum_{k< \an-\PEsqrt}\Eb\Big[\sum_{|z|=k}H_ze^{-V(z)}\underset{u^*=v^*=z}{\sum_{u\not=v}}e^{-V_z(u)}e^{-V_z(v)}\Tilde{\varphi}_{k,n}(H_v)\Big],
\end{align*}
where we have also used independence of the increments of the branching random walk $(\T,(V(x);\; x\in\T))$ and that $\psi(1)=0$. For any $h\geq 1$
\begin{align*}
    \Tilde{\varphi}_{k,n}(h):=\un_{\{h\leq n^{1/3}+1\}}\Eb\Big[\sum_{|x|= \an-k-1}\un_{\{V(x)\geq n^{1/3}\}}\frac{e^{-V(x)}}{(h-1)e^{-V(x)}+H_x}\big(2-e^{-V(x)}(h-1)\big)\Big].
\end{align*}
Note that $\Tilde{\varphi}_{k,n}(h)\geq(2-n^{-1/3}e^{-n^{1/3}})\varphi_{k,n}(h)$, see \eqref{Defphi}. Hence
\begin{align*}
    \E\Big[\VectCoord{R}{1}_{ \an}\VectCoord{Z}{1}_{ \an}\Big]\geq&\big(2-n^{-1/3}e^{-n^{1/3}}\big)\Big(\Eb\Big[\Big(n^{1/3}e^{-n^{1/3}}+\sum_{j\leq \an}e^{-S_j}\Big)^{-1}\Big]-e^{\PEsqrt\psi(2)+n^{1/3}}\Big) \\ & \times\sum_{k< \an-\PEsqrt}\Eb\Big[\sum_{|z|=k}H_ze^{-V(z)}\underset{u^*=v^*=z}{\sum_{u\not=v}}e^{-V_z(u)}e^{-V_z(v)}\un_{\{H_v\leq n^{1/3}+1\}}\Big].
\end{align*}
As in equation \eqref{TwoExpectations}, the latter mean is larger than
\begin{align*}
    &\Eb\Big[\sum_{|z|=k}H_ze^{-V(z)}\underset{u^*=v^*=z}{\sum_{u\not=v}}e^{-V_z(u)}e^{-V_z(v)}\un_{\{H_z\leq n^{1/8},\;V_z(u)\geq-\frac{1}{8}\log n\}}\Big] \\ & = \Eb\Big[\sum_{|z|=k}H_ze^{-V(z)}\un_{\{H_z\leq n^{1/8}\}}\Big]\Eb\Big[\underset{|u|=|v|=1}{\sum_{u\not=v}}e^{-V(u)}e^{-V(v)}\un_{\{V(u)\geq-\frac{1}{8}\log n\}}\Big],
\end{align*}
For the first mean above, using that $H_z\geq 1$ and thanks to the many-to-one Lemma \eqref{ManyTo1}, we have
\begin{align*}
    \Eb\Big[\sum_{|z|=k}H_ze^{-V(z)}\un_{\{H_z\leq n^{1/8}\}}\Big]\geq\Eb\Big[\sum_{|z|=k}e^{-V(z)}\un_{\{H_z\leq n^{1/8}\}}\Big]&\geq 1-\Pb\Big(\sum_{j\leq k}e^{-S_j}>n^{1/8}\Big) \\ & \geq 1-n^{-1/8}(1-e^{\psi(2)})^{-1},
\end{align*}
where, as previously, the last inequality comes from Markov inequality. Hence, for $n$ large enough and any $a\in[n^{-1/2},M]$
\begin{align*}
    \sum_{k< \an-\PEsqrt}\Eb\Big[\sum_{|z|=k}H_ze^{-V(z)}\underset{u^*=v^*=z}{\sum_{u\not=v}}e^{-V_z(u)}e^{-V_z(v)}\un_{\{H_v\leq n^{1/3}+1\}}\Big]=an(1+o(1)),
\end{align*}
Collecting previous computations, $\E[(R^{(1)}_{ \an})^2]+\bm{c}_{\infty}^2\E[(Z^{(1)}_{ \an})^2]-2\bm{c}_{\infty}\E[R^{(1)}_{ \an}Z^{(1)}_{ \an}]$ is, for $n$ large enough, any $a\in[n^{-1/2},M]$ and any $i\geq 1$, smaller than
\begin{align*}
    &o(n)+2an\Big(\bm{c}_{\infty}^2+\Eb\Big[\Big(\sum_{j\leq\PEsqrt}e^{-S_j}\Big)^{-1}\Big]^2-2\bm{c}_{\infty}\Eb\Big[\Big(n^{1/3}e^{-n^{1/3}}+\sum_{j\leq \an}e^{-S_j}\Big)^{-1}\Big]\Big) \\ & \leq o(n)+2Mn\Big(\bm{c}_{\infty}^2+\Eb\Big[\Big(\sum_{j\leq\PEsqrt}e^{-S_j}\Big)^{-1}\Big]^2-2\bm{c}_{\infty}\Eb\Big[\Big(n^{1/3}e^{-n^{1/3}}+\sum_{j\leq\Mn-\lfloor(\log n)^2\rfloor}e^{-S_j}\Big)^{-1}\Big]\Big).
\end{align*}
Therefore
\begin{align*}
    \sup_{a\in[n^{-1/2},M]}\frac{1}{n^2}\sum_{i=1}^n\Big(\E\big[\big(R^{(1)}_{ \an}\big)^2\big]+\bm{c}_{\infty}^2\E\big[\big(Z^{(1)}_{ \an}\big)^2\big]-2\bm{c}_{\infty}\E\big[R^{(1)}_{ \an}Z^{(1)}_{ \an}\big]\Big)\underset{n\to\infty}{\longrightarrow}0.
\end{align*}
Combining this convergence with \eqref{ConvUnifPartie1}, we obtain \eqref{ConvEspCarreRange2}. It follows from Lemma \ref{InegMax} that
\begin{align*}
    \P\Big(\sup_{a\in[n^{-1/2},M]}\Big|\Tilde{R}^{(n)}_{\an}-\bm{c}_{\infty}Z^{(n)}_{\an}\Big|>n\varepsilon\Big)\underset{n\to\infty}{\longrightarrow}0.
\end{align*}
Finally, recall the definition of $\mathfrak{S}_n$ in \eqref{EnsSingleExcu} and
\begin{align*}
    \P\Big(\sup_{a\in[n^{-1/2},M]}\Big|R^{(n)}_{\an}-\bm{c}_{\infty}Z^{(n)}_{\an}\Big|>n\varepsilon\Big)\leq\P\Big(\sup_{a\in[n^{-1/2},M]}\Big|\Tilde{R}^{(n)}_{\an}-\bm{c}_{\infty}Z^{(n)}_{\an}\Big|>n\varepsilon\Big)+1-\P(\mathfrak{S}_n),
\end{align*}
and Lemma \ref{SingleExcu} with $\gamma_n=\PEsqrt$ yields the first part of Lemma \ref{ReducedProcesses}. For the second part, using similar arguments, we obtain
\begin{align*}
    \sup_{p\leq\tn}\sup_{a\in[n^{-1/2},M]}\E\Big[\frac{1}{n^2}\Big|Z^{(n)}_{ \an+1}-Z^{(n)}_{\an}\Big|^2\Big]\underset{n\to\infty}{\longrightarrow}0.
\end{align*}
Recall that $L^{(n)}_k=Z^{(n)}_k+Z^{(n)}_{k+1}$ so this convergence, together with Lemma \ref{InegMax} allow to complete the proof.
\end{proof}

\begin{lemm}\label{Conv1ExcuUnif}
Let $i\geq 1$ and $(a_{i,n})_{n\geq 1}$ be a sequence of non-negative integers such that $\sup_{i\geq 1}a_{i,n}\leq a_n$ where $a_n/n\to0$ as $n\to\infty$. For any $a,\varepsilon>0$
\begin{enumerate}[label=(\roman*)]
    \item\label{Conv1ExcuUnif1}
        \begin{align*}
            \lim_{n\to\infty}\sup_{i\geq 1}n\P\Big(\big|R^{(1)}_{\an}-\bm{c}_{\infty}Z^{(1)}_{\an}\big|>n\varepsilon\Big)=0;
        \end{align*}
    \item\label{Conv1ExcuUnif2}
        \begin{align*}
            \lim_{n\to\infty}\sup_{i\geq1}n\P\Big(\big|Z^{(1)}_{\an-a_{i,n}}-Z^{(1)}_{\an}\big|>n\varepsilon\Big)=0.
        \end{align*}
\end{enumerate}
\end{lemm}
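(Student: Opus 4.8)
The plan is to prove both items by a second-moment (Chebyshev) bound, re-using the moment estimates obtained inside the proof of Lemma~\ref{ReducedProcesses}; since $N^{(0)}_x=0$ we have $\Tilde{R}^{(1)}_k=R^{(1)}_k$ for $p=1$, so the single-excursion event $\mathfrak{S}_n$ plays no role here. Observe that in \ref{Conv1ExcuUnif1} neither $\VectCoord{R}{1}_{\an}$ nor $\VectCoord{Z}{1}_{\an}$ depends on $i$, so the supremum over $i$ is vacuous there; the index $i$ enters genuinely only in \ref{Conv1ExcuUnif2}, through the gap $a_{i,n}$.

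For \ref{Conv1ExcuUnif1}, I would apply Markov's inequality with exponent $2$,
\begin{align*}
    n\,\P\big(\big|\VectCoord{R}{1}_{\an}-\bm{c}_{\infty}\VectCoord{Z}{1}_{\an}\big|>n\varepsilon\big)\leq\frac{1}{n\varepsilon^{2}}\,\E\big[\big|\VectCoord{R}{1}_{\an}-\bm{c}_{\infty}\VectCoord{Z}{1}_{\an}\big|^{2}\big],
\end{align*}
and then invoke the fact that the proof of Lemma~\ref{ReducedProcesses} shows $\E[|\VectCoord{R}{1}_{\an}-\bm{c}_{\infty}\VectCoord{Z}{1}_{\an}|^{2}]=o(n)$, uniformly for $a$ in any bounded range (in particular for the fixed value $a$, which lies in the range covered there once $n$ is large). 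Indeed the diagonal sum $\frac{1}{n^{2}}\sum_{i=1}^{n}(\E[(\VectCoord{R}{1}_{\an})^{2}]+\bm{c}_{\infty}^{2}\E[(\VectCoord{Z}{1}_{\an})^{2}]-2\bm{c}_{\infty}\E[\VectCoord{R}{1}_{\an}\VectCoord{Z}{1}_{\an}])$ treated there equals $\frac{1}{n}\E[|\VectCoord{R}{1}_{\an}-\bm{c}_{\infty}\VectCoord{Z}{1}_{\an}|^{2}]$ and was proved to tend to $0$. The right-hand side above is therefore $o(1)$, which gives \ref{Conv1ExcuUnif1}.

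For \ref{Conv1ExcuUnif2}, the key point is that $(\VectCoord{Z}{1}_{k})_{k\geq0}$ is a square-integrable $(\VectCoord{\mathcal{G}}{1}_{k})$-martingale, so its $L^{2}$-increments are orthogonal: with $\ell:=\an-a_{i,n}\leq\an$,
\begin{align*}
    \E\big[\big(\VectCoord{Z}{1}_{\an}-\VectCoord{Z}{1}_{\ell}\big)^{2}\big]=\E\big[\big(\VectCoord{Z}{1}_{\an}\big)^{2}\big]-\E\big[\big(\VectCoord{Z}{1}_{\ell}\big)^{2}\big].
\end{align*}
The proof of Lemma~\ref{ReducedProcesses} gives $\E[(\VectCoord{Z}{1}_{k})^{2}]=1+2A_{k}$ with $A_{k}:=\Eb[\sum_{x\neq y,\,|x|=|y|=k}H_{x\land y}e^{V(x\land y)}e^{-V(x)}e^{-V(y)}]$; decomposing on the generation $l=|x\land y|$ and using the many-to-one Lemma~\eqref{ManyTo1} with $\psi(1)=0$ yields $A_{k}=c_{0}(1-e^{\psi(2)})\sum_{l<k}\Eb[\sum_{j=0}^{l}e^{-S_{j}}]$. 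Since $\psi(2)<0$ in the diffusive regime, $\Eb[\sum_{j=0}^{l}e^{-S_{j}}]\leq\sum_{j\geq0}e^{j\psi(2)}=(1-e^{\psi(2)})^{-1}$, so each summand is at most $c_{0}$ and
\begin{align*}
    A_{\an}-A_{\ell}\leq c_{0}\,(\an-\ell)=c_{0}\,a_{i,n}\leq c_{0}\,a_{n}.
\end{align*}
Hence $\E[(\VectCoord{Z}{1}_{\an-a_{i,n}}-\VectCoord{Z}{1}_{\an})^{2}]\leq2c_{0}a_{n}$, and Markov's inequality gives $\sup_{i\geq1}n\,\P(|\VectCoord{Z}{1}_{\an-a_{i,n}}-\VectCoord{Z}{1}_{\an}|>n\varepsilon)\leq 2c_{0}a_{n}/(n\varepsilon^{2})\to0$ because $a_{n}/n\to0$.

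The only genuinely delicate point is the uniformity in $i$, and it is precisely the martingale structure that resolves it: the squared increment collapses to the monotone difference $A_{\an}-A_{\an-a_{i,n}}$, which is controlled by the gap $a_{i,n}$ alone, hence by the single majorant $a_{n}$, so the resulting bound no longer involves $i$. The per-generation estimate $A_{k}-A_{k-1}\leq c_{0}$ — itself relying on the summability $e^{\psi(2)}<1$ afforded by $\kappa>2$ — is what upgrades the difference from $O(n)$ to $o(n)$ and drives the convergence.
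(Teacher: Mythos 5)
Your proof is correct and follows essentially the same route as the paper, which simply states that the lemma is obtained "using the same arguments" as Lemma~\ref{ReducedProcesses}, i.e.\ by establishing that the relevant second moments are $o(n)$ and concluding by Chebyshev. Your use of the martingale orthogonality $\E[(Z^{(1)}_{\an}-Z^{(1)}_{\ell})^2]=\E[(Z^{(1)}_{\an})^2]-\E[(Z^{(1)}_{\ell})^2]$ for item~(ii) is also the device the paper itself employs in the tightness step of Theorem~\ref{ThMultiMar}, and it correctly delivers the uniformity in $i$ via the single majorant $a_n$.
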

\noindent The proof of Lemma \ref{Conv1ExcuUnif} is very similar to the one of Lemma \ref{ReducedProcesses}. Indeed, using the same arguments together with the fact that $\sup_{i\geq 1}a_{i,n}\leq a_n$, one can prove that
\begin{align*}
    \lim_{n\to\infty}\sup_{i\geq 1}n^2\E\Big[\Big|R^{(1)}_{\an-a_{i,n}}-\bm{c}_{\infty}Z^{(1)}_{\an-a_{i,n}}\Big|^2\Big]=0\;\textrm{ and }\;\lim_{n\to\infty}\sup_{i\geq 1}n^2\E\Big[\Big|Z^{(1)}_{\an-a_{i,n}}-Z^{(1)}_{\an}\Big|^2\Big]=0
\end{align*}

\noindent We are now ready to prove Theorem \ref{ThMultiMar}.

\begin{proof}[Proof of Theorem \ref{ThMultiMar}]

\noindent Recall that we aim to prove that the sequence $((\frac{1}{n}\VectCoord{R}{n}_{\an},\;\frac{1}{n}\VectCoord{Z}{n}_{\an},\;\frac{1}{n}\VectCoord{L}{n}_{\an};\;  a\geq 0))_{n\geq 1}$ converges in law, under $\P$, for the Skorokhod topology on $D([0,\infty),\R^3)$ towards the random process $(\bm{c}_{\infty}\mathcal{Z}_a(W_{\infty}),\; \mathcal{Z}_a(W_{\infty}),\;2\mathcal{Z}_a(W_{\infty});\; a\geq 0)$ where we recall that $\mathcal{Z}_a(t)=t\mathcal{Y}_{a/t}$ if $t>0$ and $\mathcal{Z}_a(0)=0$. For that, we proceed in two distinct steps.

\vspace{0.2cm}

\noindent\textbf{Step 1}: tightness of the \textit{multi-type additive martingale}

\vspace{0.1cm}

\noindent The first purpose of this step is to prove that $((\frac{1}{n}\VectCoord{Z}{n}_{\an};\;  a\geq 0))_{n\geq 1}$ is tight in $D([0,\infty),\R)$ under $\P$. We actually prove that it is $C$-tight in the sense of Definition 3.25, Chapter VI in \cite{JacodShiryaev}. For that, it will suffice to show the following: for any $M>0$
\begin{align}\label{L2Bounded}
    \sup_{n\geq 1}\E\Big[\sup_{a\in[0,M]}\big(Z^{(n)}_{\an}/n\big)^2\Big]<\infty.
\end{align}
and for any $\varepsilon>0$
\begin{align}\label{Ctight2}
        \lim_{\delta\to 0}\limsup_{n\to\infty}\P\Big(\sup_{a,b\in[0,M];\; 0<a-b<\delta}\big|Z^{(n)}_{\an}-Z^{(n)}_{\bn}\big|>\varepsilon n\Big)=0.
\end{align}
For \eqref{L2Bounded}, thanks to the Doob inequality
\begin{align*}
    \E\Big[\sup_{a\in[0,M]}\big(Z^{(n)}_{\an}/n\big)^2\Big]\leq 4\E\Big[\big(Z^{(n)}_{\Mn}(\tn)/n\big)^2\Big].
\end{align*}
Using similar arguments as in the proof of Lemma \ref{ReducedProcesses}, one can see that for any $n\geq 1$
\begin{align*}
    \E\Big[\Big(\frac{1}{n}Z^{(n)}_{\Mn}\Big)^2\Big]\leq\frac{2}{n(1-e^{\psi(2)})}+2ac_0+1,
\end{align*}
and deduce \eqref{L2Bounded}. For \eqref{Ctight2}, we have, thanks to Lemma \ref{InegMax} in a first time and Doob inequality to the martingale $(Z^{(n)}_{\ell+q}-Z^{(n)}_{\ell})_{q\geq 0}$ in a second time that for any $n\geq 2/\delta$
\begin{align*}
    &\P\Big(\sup_{a,b\in[0,M];\; 0<a-b<\delta}\big|Z^{(n)}_{\an}-Z^{(n)}_{\bn}(p)\big|>\varepsilon n\Big) \\ & \leq\frac{1}{(\varepsilon n)^2}\sup_{\ell\leq\Mn}\E\Big[\sup_{q\leq\lfloor 2\delta n\rfloor}\big(Z^{(n)}_{\ell+q}-Z^{(n)}_{\ell}\big)^2\Big]\leq\frac{4}{(\varepsilon n)^2}\sup_{\ell\leq\Mn}\E\Big[\big(Z^{(n)}_{\ell+\lfloor 2\delta n\rfloor}-Z^{(n)}_{\ell}\big)^2\Big].
\end{align*}
Note that $\E[(Z^{(n)}_{\ell+\lfloor 2\delta n\rfloor}-Z^{(n)}_{\ell})^2]=\E[(Z^{(n)}_{\ell+\lfloor 2\delta n\rfloor})^2]-\E[(Z^{(n)}_{\ell})^2]$ and similarly as is the proof of Lemma \ref{ReducedProcesses}, we have, as $n\to\infty$, for any $\ell\leq\Mn$ and $p\leq\tn$
\begin{align*}
    \E\Big[\Big(Z^{(n)}_{\ell+\lfloor 2\delta n\rfloor}\Big)^2\Big]=o(n^2)+2nc_0(1-e^{\psi(2)})\sum_{k<\ell+\lfloor 2\delta n\rfloor}\;\sum_{j\leq k}e^{j\psi(2)}+p^2,
\end{align*}
and
\begin{align*}
    \E\Big[\Big(Z^{(n)}_{\ell}\Big)^2\Big]=o(n^2)+2nc_0(1-e^{\psi(2)})\sum_{k<\ell}\;\sum_{j\leq k}e^{j\psi(2)}+p^2.
\end{align*}
It yields
\begin{align*}
    \E\Big[\Big(Z^{(n)}_{\ell+\lfloor 2\delta n\rfloor}-Z^{(n)}_{\ell}\Big)^2\Big]&=o(n^2)+2nc_0(1-e^{\psi(2)})\sum_{k=\ell}^{\ell+\lfloor 2\delta n\rfloor-1}\sum_{j\leq k}e^{j\psi(2)}\leq o(n^2)+2\delta c_0n^2,
\end{align*}
that is, for any $\delta>0$
\begin{align*}
    \limsup_{n\to\infty}\P\Big(\sup_{a,b\in[0,M];\; 0<a-b<\delta}\big|Z^{(n)}_{\an}-Z^{(n)}_{\bn}\big|>\varepsilon n\Big)\leq\frac{8\delta c_0}{\varepsilon^2},
\end{align*}
thus giving \eqref{Ctight2}.

\vspace{0.2cm}

\noindent\textbf{Step 2}: convergence in law under $\P$

\vspace{0.1cm}

\noindent In this step, we prove that in law under $\P$, the sequence $((\frac{1}{n}\VectCoord{R}{n}_{\an},\;\frac{1}{n}\VectCoord{Z}{n}_{\an},\;\frac{1}{n}\VectCoord{L}{n}_{\an};\;  a\geq 0))_{n\geq 1}$ converges for the Skorokhod topology on $D([0,\infty),\R^3)$ towards $(\bm{c}_{\infty}\mathcal{Z}_a(W_{\infty}),\; \mathcal{Z}_a(W_{\infty}),\;2\mathcal{Z}_a(W_{\infty});\; a\geq 0)$. Thanks to Lemma \ref{ReducedProcesses}, we only have to prove that $((\frac{1}{n}\VectCoord{L}{n}_{\an};\;  a\geq 0))_{n\geq 1}$ converges to $(2\mathcal{Z}_a(W_{\infty});\; a\geq 0)$. For that, we claim that under $\P$, in law on $\R\times D([0,\infty),\R)$
\begin{align}\label{JointConvClaim}
    \left(\frac{C_{\infty}}{n^2}\tau^n\,;\Big(\frac{C_{\infty}}{n}\big|X_{\lfloor sn^2\rfloor}\big|;\; s\geq 0\Big)\right)\underset{n\to\infty}{\longrightarrow}\Big(\tau_{-W_{\infty}},\;\big(H_{sC_{\infty}};\; s\geq 0\big)\Big),
\end{align}
where $Y_s=(2c_0/C_{\infty})B_s$, with $(B_s;\; s\geq 0)$ a standard Brownian motion, $\tau_{-W_{\infty}}:=\inf\{s\geq 0;\; Y_s=-W_{\infty}\}$ and $(H_s;\; s\geq 0)$ is the continuous-time height process associated with $(Y_s;\; s\geq 0)$, see Definition 1.2.1 in \cite{DuqLeGall}. Note that $(H_s;\; s\geq 0)$ is distributed as $((2C_{\infty}/c_0)^{1/2}|B_s|;\; s\geq 0)$. The convergence of $(C_{\infty}\tau^n/n^2)_{n\geq 1}$ to $\tau_{-W_{\infty}}$ has been proven in \cite{Hu2017} (Corollary 1.2) and in \cite{AK23LocalTimes} (Theorem 1.2). The convergence of $((C_{\infty}\big|X_{\lfloor sn^2\rfloor}/n\big|;\; s\geq 0))_{n\geq 1}$ to $(H_{sC_{\infty}};\; s\geq 0)$ has been proven in \cite{AidRap} (Theorem 6.1). The fact that these two convergences hold jointly is a consequence of Corollary 2.5.1 in \cite{DuqLeGall}, using similar arguments as in Fact 3.4 in \cite{AK24LocCrtiGen}. \\
We can now use \eqref{JointConvClaim} to prove the convergence of the rescaled local time. For that, we follow an argument used in the proof of Corollary 2.5.1 in \cite{DuqLeGall}. Thanks to \textbf{Step 2}, \eqref{JointConvClaim} and the Skorokhod
representation theorem, one can assume, without loss of generality, that the converge
\begin{align}\label{JointConvAS}
    \left(\frac{C_{\infty}}{n^2}\tau^n\,;\Big(\frac{C_{\infty}}{n}\big|X_{\lfloor sn^2\rfloor}\big|;\; s\geq 0\Big);\;\Big(\frac{1}{n}L^{(n)}_{\an};\; a\geq 0\Big) \right)\underset{n\to\infty}{\longrightarrow}\Big(\tau_{-W_{\infty}},\;\big(H_{sC_{\infty}};\; s\geq 0\big);\; \big(L_a;\; a\geq 0\big)\Big) 
\end{align}
holds $\P$-almost surely and along a sub-sequence, for some càdlàg process $(L_a;\; a\geq 0)$. Let $\varphi:[0,\infty)\to[0,\infty)$ be a Lipschitz continuous function with compact support. Thanks to Lemma \ref{InegMax} and \eqref{L2Bounded}, we have that $\sup_{n\geq 1}\sup_{a\in[0,M]}L^{(n)}_{\an}/n$ is finite for all $M>0$, $\P$-almost surely, so on the one hand, \eqref{JointConvAS} and the dominated convergence theorem yields, $\P$-almost surely
\begin{align}\label{ConvIntRange}
    \frac{1}{n}\int_0^{\infty}\varphi(a)L^{(n)}_{\an}\mathrm{d}a\underset{n\to\infty}{\longrightarrow}\int_0^{\infty}\varphi(a)L_a\mathrm{d}a.
\end{align}
On the other hand, by definition
\begin{align*}
    L^{(n)}_{\an}=\sum_{j=0}^{\tau^{n}-1}\un_{\{|X_j|=\an\}}=\sum_{j=0}^{\tau^n-1}\un_{\big[|X_j|/n,(|X_j|+1)/n)\big)}(a).
\end{align*}
Hence 
\begin{align*}
    \frac{1}{n}\int_0^{\infty}\varphi(a)L^{(n)}_{\an}\mathrm{d}a&=\Delta_n+\frac{1}{n^2}\sum_{j=0}^{\tau^n-1}\varphi\Big(\frac{|X_j|}{n}\Big) \\ & =\Delta_n+\frac{1}{n^2}\tau^n\int_0^1\varphi\Big(\frac{|X_{\lfloor s\tau^n\rfloor}|}{n}\Big)\mathrm{d}s,
\end{align*}
where 
\begin{align*}
    \Delta_n:=\frac{1}{n}\sum_{j=0}^{\tau^n-1}\int_{|X_j|/n}^{(|X_j|+1)/n}\Big(\varphi(a)-\varphi\big(|X_j|)/n\big)\Big)\mathrm{d}a.
\end{align*}
One can see that
\begin{align*}
    |\Delta_n|\leq\frac{\mathfrak{C}_{\varphi}}{n}\sum_{j=0}^{\tau^n-1}\int_{|X_j|/n}^{(|X_j|+1)/n}\Big(a-\frac{|X_j|}{n}\Big)\mathrm{d}a\leq\frac{\mathfrak{C}_{\varphi}}{n^3}\tau^n,
\end{align*}
for some positive constant $\mathfrak{C}_{\varphi}$ coming from the fact that $\varphi$ is Lipschitz continuous. By \eqref{JointConvAS}, $\P$-almost surely, $\Delta_n\to0$ as $n\to\infty$ and
\begin{align*}
     \frac{1}{n}\int_0^{\infty}\varphi(a)\VectCoord{L}{n}_{\an}\mathrm{d}a\underset{n\to\infty}{\longrightarrow}&\frac{2}{C_{\infty}}\tau_{-W_{\infty}}\int_0^1\varphi\big(H_{s\tau_{-W_{\infty}}}/C_{\infty}\big)\mathrm{d}s \\[0.7em] & =\frac{2}{C_{\infty}}\int_0^{\tau_{-W_{\infty}}}\varphi\big(H_s/C_{\infty}\big)\mathrm{d}s.
\end{align*}
By a representation theorem for local times (see Proposition 1.3.3 in \cite{DuqLeGall}), we have
\begin{align*}
    \int_0^{\tau_{-W_{\infty}}}\varphi\big(H_s/C_{\infty}\big)\mathrm{d}s=\int_0^{\infty}\varphi(a/C_{\infty})L(\tau_{-W_{\infty}},a)\mathrm{d}a=C_{\infty}\int_0^{\infty}\varphi(a)L(\tau_{-W_{\infty}},aC_{\infty})\mathrm{d}a,
\end{align*}
where $L(\tau_{-W_{\infty}},a)$ is the local time of $H$ at time $\tau_{-W_{\infty}}$ and level $a$. Hence, it follows from \eqref{ConvIntRange} that 
\begin{align*}
    \int_0^{\infty}\varphi(a)L_a\mathrm{d}t=2\int_0^{\infty}\varphi(a)L(\tau_{-W_{\infty}},aC_{\infty})\mathrm{d}a,
\end{align*}
and then $L_a=2L(\tau_{-W_{\infty}},aC_{\infty})$. In particular, the limit of $((\VectCoord{L}{n}_{\an}/n;\; a\geq 0))_{n\geq 1}$ does not depend on the choice of the sub-sequence and equals $(2L(\tau_{-W_{\infty}},aC_{\infty});\; a\geq 0)$. Thanks to the Ray-Knight theorem (see Theorem 1.4.1 in \cite{DuqLeGall}), the process $(L(\tau_{-W_{\infty}},a);\; a\geq 0)$ is a continuous-state branching process with branching mechanism $\lambda\mapsto\frac{c_0}{C_{\infty}}\lambda^2$ starting from $W_{\infty}$, that is $L(\tau_{-W_{\infty}},0)=W_{\infty}$, which, thanks to \eqref{LaplaceCSBP}, is distributed as $W_{\infty}\mathcal{Y}_{a/(C_{\infty}W_{\infty})}$, thus giving that $(2L(\tau_{-W_{\infty}},aC_{\infty});\; a\geq 0)$ is distributed as $(2\mathcal{Z}_{a}(W_{\infty});\; a\geq 0)$. The proof of the theorem is now completed.
\end{proof}

\noindent Let us now prove Theorem \ref{ThMultiMar}.

\subsection{ Proof of Theorem \ref{GenealogyDiffCritic}}\label{SectionGenealogyDiffCritic}

Before proving Theorem \ref{GenealogyDiffCritic}, we need the following result:
\begin{lemm}\label{ConvLoiCondi}
Let $f:\R\to\R$ be a continuous and non-negative function and $k\geq 1$. Let $g_k(n,0,f)=e^{-f(0)}$ and for any $p\geq 1$, define 
\begin{align}\label{Defg}
    g_k(n,p,f):=\E\Big[e^{-f(\frac{1}{n}R^{(p)}_k)}\Big].
\end{align}
For any $a>0$
\begin{align*}
    \lim_{n\to\infty}n\big(1-g_{\an}(n,1,f)\big)= \frac{1}{c_0a}\Big(1-\int_0^{\infty}e^{-f(\bm{c}_{\infty}c_0at)-t}\mathrm{d}t\Big),
\end{align*}
\end{lemm}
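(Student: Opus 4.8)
The plan is to read off the single-excursion Laplace functional from the $n$-excursion one, whose limit Theorem~\ref{ThMultiMar} already provides, and then average over the environment using $\Eb[W_{\infty}]=1$. Throughout I take $f(0)=0$, which is the case in which $n\bigl(1-g_{\an}(n,1,f)\bigr)$ has a finite limit (if $f(0)\neq 0$ the contribution of $\{R^{(1)}_{\an}=0\}$ forces divergence), so that $1-g_{\an}(n,1,f)=\E\bigl[(1-e^{-f(R^{(1)}_{\an}/n)})\un_{\{R^{(1)}_{\an}>0\}}\bigr]$. Fix $\lambda\geq 0$ and set $A_n:=\E^{\mathcal{E}}[e^{-\lambda R^{(1)}_{\an}/n}]$. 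Under $\P^{\mathcal{E}}$ the $n$ excursions are i.i.d., so the additive reduced range $\widetilde R^{(n)}_{\an}=\sum_{j=1}^{n}R^{(1),j}_{\an}$ satisfies $\E^{\mathcal{E}}[e^{-\lambda\widetilde R^{(n)}_{\an}/n}]=A_n^{\,n}$, while on the event $\mathfrak{S}_n$ of Lemma~\ref{SingleExcu} (taken with $\gamma_n=\PEsqrt$, so $\P(\mathfrak{S}_n)\to 1$ since $\psi(2)<0$) one has $\widetilde R^{(n)}_{\an}=R^{(n)}_{\an}$. Hence $A_n^{\,n}=\E^{\mathcal{E}}[e^{-\lambda R^{(n)}_{\an}/n}]+o(1)$ in $\Pb$-probability, and by the quenched convergence \eqref{QuenchedConv} together with \eqref{LaplaceCSBP} (applied with $\mu=\lambda\bm{c}_{\infty}$ and $\mathcal{Y}_0=1$) the right-hand side tends to $\exp\!\bigl(-\lambda\bm{c}_{\infty}W_{\infty}/(1+c_0a\bm{c}_{\infty}\lambda)\bigr)$. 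Since $1-A_n\leq\lambda\,\E^{\mathcal{E}}[R^{(1)}_{\an}]/n=\lambda\bigl(\sum_{|x|=\an}e^{-V(x)}/H_x\bigr)/n\leq \lambda W_{\an}/n\to 0$, I may expand $n\log A_n=-n(1-A_n)+o(1)$ and conclude
\[
n\big(1-A_n\big)\underset{n\to\infty}{\longrightarrow}\frac{\lambda\bm{c}_{\infty}W_{\infty}}{1+c_0a\bm{c}_{\infty}\lambda}\quad\text{in }\Pb\text{-probability.}
\]

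First I would use this to isolate the conditional behaviour. The maps $\lambda\mapsto n(1-A_n)$ are nondecreasing and converge pointwise, as $\lambda\uparrow\infty$, to $n\,\P^{\mathcal{E}}(R^{(1)}_{\an}>0)$; this identifies the survival rate $n\,\P^{\mathcal{E}}(R^{(1)}_{\an}>0)\to W_{\infty}/(c_0a)$, and dividing the displayed limit by it gives the conditional Laplace transform
\[
\E^{\mathcal{E}}\big[e^{-\lambda R^{(1)}_{\an}/n}\,\big|\,R^{(1)}_{\an}>0\big]\underset{n\to\infty}{\longrightarrow}\frac{1}{1+c_0a\bm{c}_{\infty}\lambda},
\]
in which $W_{\infty}$ cancels. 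The limit is the Laplace transform of $\bm{c}_{\infty}c_0a\,E$ with $E$ a standard exponential, so conditionally on $\{R^{(1)}_{\an}>0\}$ the variable $R^{(1)}_{\an}/n$ converges in law to $\bm{c}_{\infty}c_0a\,E$, a limit that does not depend on $\mathcal{E}$. As $y\mapsto 1-e^{-f(y)}$ is bounded and continuous with $1-e^{-f(0)}=0$, weak convergence then yields the quenched statement $n\,\E^{\mathcal{E}}[1-e^{-f(R^{(1)}_{\an}/n)}]\to\frac{W_{\infty}}{c_0a}\,\E[1-e^{-f(\bm{c}_{\infty}c_0a\,E)}]$, and the inner expectation is exactly $1-\int_0^{\infty}e^{-f(\bm{c}_{\infty}c_0at)-t}\,\mathrm{d}t$.

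Finally I would integrate over the environment. Since $g_{\an}(n,1,f)=\Eb\bigl[\E^{\mathcal{E}}[e^{-f(R^{(1)}_{\an}/n)}]\bigr]$ and $\Eb[W_{\infty}]=1$, passing the quenched limit through $\Eb$ produces $\frac{1}{c_0a}\bigl(1-\int_0^{\infty}e^{-f(\bm{c}_{\infty}c_0at)-t}\mathrm{d}t\bigr)$, which is the asserted value. One checks consistency on the linear case $f(y)=\lambda y$: the formula returns $\lambda\bm{c}_{\infty}/(1+c_0a\bm{c}_{\infty}\lambda)$, matching the average of the first display against $\Eb[W_{\infty}]=1$.

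The main obstacle is the limit interchange hidden in the passage $\lambda\to\infty$, namely the matching upper bound $\limsup_n n\,\P(R^{(1)}_{\an}>0)\leq 1/(c_0a)$, together with the domination needed to move the quenched limit through $\Eb$. Both come down to excluding that a single excursion reaches generation $\an$ while depositing only $o(n)$ local time there, i.e.\ that no mass of the excursion measures $\nu_n(\mathrm{d}y):=n\,\P\bigl(R^{(1)}_{\an}/n\in\mathrm{d}y,\ R^{(1)}_{\an}>0\bigr)$ escapes to $0$. Here I would invoke the second-moment estimates already established in the proof of Lemma~\ref{ReducedProcesses}, which give $\E[(R^{(1)}_{\an})^2]=O(n)$ and $\E[R^{(1)}_{\an}]\to\bm{c}_{\infty}$; these bound $\int y^2\,\nu_n(\mathrm{d}y)$ and $\int y\,\nu_n(\mathrm{d}y)$ uniformly in $n$, giving tightness of $(\nu_n)$ and convergence of total masses $n\,\P(R^{(1)}_{\an}>0)\to 1/(c_0a)$. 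Combined with the in-probability convergence of $n\,\P^{\mathcal{E}}(R^{(1)}_{\an}>0)$ to $W_{\infty}/(c_0a)$ and the matching of expectations $\Eb[n\,\P^{\mathcal{E}}(R^{(1)}_{\an}>0)]\to 1/(c_0a)=\Eb[W_{\infty}/(c_0a)]$, Scheffé's lemma upgrades this to $L^1(\Pb)$ convergence, which supplies the required uniform integrability and closes the argument.
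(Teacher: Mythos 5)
Your overall route is genuinely different from the paper's: you extract the single-excursion Laplace functional as the $n$-th root of the quenched $n$-excursion one, using the i.i.d.\ excursion decomposition, the reduction $R^{(n)}_{\an}=\Tilde{R}^{(n)}_{\an}$ on $\mathfrak{S}_n$, and \eqref{QuenchedConv}; the paper instead decomposes $Z^{(n)}_{\an}$ over the Aïdékon--de Raphélis set $\mathcal{B}^{(n)}$, deduces $\lim_n n\big(1-\E[e^{-\theta Z^{(1)}_{\an}/n}]\big)=\theta/(1+\theta c_0a)$ from Theorem \ref{ThMultiMar}, and then transfers from $Z^{(1)}$ to $R^{(1)}$ via Lemma \ref{Conv1ExcuUnif}. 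Up to and including the in-probability convergence $n(1-A_n)\to\lambda\bm{c}_{\infty}W_{\infty}/(1+c_0a\bm{c}_{\infty}\lambda)$ your steps are sound, and your observation that $f(0)=0$ is implicitly required is correct (the lemma is only ever applied to $f(\mathfrak{z})=\lambda_1\mathfrak{z}^2+\lambda_2\mathfrak{z}$).

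The gap sits exactly where you flag it, but your proposed fix does not work. The matching upper bound $\limsup_n n\,\P(R^{(1)}_{\an}>0)\leq 1/(c_0a)$ --- equivalently, that no mass of $\nu_n(\mathrm{d}y):=n\,\P(R^{(1)}_{\an}/n\in\mathrm{d}y,\;R^{(1)}_{\an}>0)$ escapes to $0$ --- cannot be deduced from $\E[R^{(1)}_{\an}]\to\bm{c}_{\infty}$ and $\E[(R^{(1)}_{\an})^2]=O(n)$. Those moments are blind to atoms of $\nu_n$ near the origin: an event $\{R^{(1)}_{\an}=1\}$ of probability $(\log n)/n$ would contribute only $(\log n)/n\to 0$ to $\int y\,\nu_n(\mathrm{d}y)$ and $(\log n)/n^{2}$ to $\int y^{2}\,\nu_n(\mathrm{d}y)$, yet would force $n\,\P(R^{(1)}_{\an}>0)\to\infty$. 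So first and second moments give tightness of $\nu_n$ on $\{y\geq\delta\}$ but say nothing about its total mass, and consequently the interchange of $\lambda\to\infty$ with $n\to\infty$, the identification of the quenched survival rate $W_{\infty}/(c_0a)$, the Scheffé upgrade to $L^1(\Pb)$, and the final normalisation all remain unjustified. This survival asymptotic is a genuine Kolmogorov-type estimate for the single-excursion branching structure; the paper does not derive it either but imports it from Theorem 1.2 in \cite{RousselinConduc} and Theorem 2.1 in \cite{AK24LocCrtiGen} (which also supply the quenched version). If you replace your moment argument by that citation, the rest of your proof closes.
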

\begin{proof}
Let us prove that the sequence of probability measure $\P(\frac{1}{n}R^{(1)}_{\an}\in\cdot|R^{(1)}_{\an}>0)$ converges to the law of $\bm{c}_{\infty}c_0a\xi$ where $\xi$ is an Exponential random variable with mean $1$.
For that, define, for any $n\geq 1$
\begin{align*}
    \mathcal{B}^{(n)}:=\big\{x\in\T,\; |x|>\lfloor(\log n)^2\rfloor;\; N_x^{(n)}=1\textrm{ and }\min_{\ell<i<|x|}N_{x_i}^{(n)}\geq 2\big\}.
\end{align*}
This set was first introduced by E. Aïdékon and L. de Raphélis in \cite{AidRap}. Thanks to Lemma 7.2 in \cite{AidRap}, $\lim_{n\to\infty}\P^{\mathcal{E}}(\forall\; z\in\mathcal{B}^{(n)};\;\lfloor(\log n)^2\rfloor\leq|z|\leq\lfloor(\log n)^3\rfloor;\;\forall x\in\T,\; |x|=\an:\; \exists\; z\in\mathcal{B}^{(n)}\textrm{ such that }z<x)=1$, $\Pb$-almost surely. In particular, $\Pb$-almost surely
\begin{align}\label{ReducedMultiMart}
    \P^{\mathcal{E}}\left(Z^{(n)}_{\an}=\sum_{z\in\mathcal{B}^{(n)}}\sum_{x>z}N^{(n)}_x\un_{\{|x|_z=\an-|z|\}}\right)\underset{n\to\infty}{\longrightarrow}1,
\end{align}
with the convention $\sum_{\varnothing}=0$ and $|x|_z$ denotes the generation of the vertex $x$ with respect to the tree rooted at $z$ and made up of descendent of $z$. This reduce multi-type additive martingale is nice to deal with because unlike $Z_{\an}$, we have that under the annealed probability $\P$ and conditionally given $\mathcal{B}^{(n)}$, the random variables $(\sum_{x>z}N^{(n)}_x\un_{\{|x|_z=\an-|z|\}},\; z\in\mathcal{B}^{(n)})$ are independent copies of $(Z^{(1)}_{\an-|z|},\; z\in\mathcal{B}^{(n)})$. \\
Now, on the one hand, by Theorem \ref{ThMultiMar} and \eqref{LaplaceCSBP}, we have
\begin{align}\label{LimLaplceMultiMart}
    \lim_{n\to\infty}\E\Big[e^{-\frac{\theta}{n}Z^{(n)}_{\an}}\Big]=\Eb\Big[e^{-\frac{\theta W_{\infty}}{1+\theta c_0a}}\Big].
\end{align}
One the other hand, by \eqref{ReducedMultiMart}
\begin{align*}
    \lim_{n\to\infty}\E\Big[e^{-\frac{\theta}{n}Z^{(n)}_{\an}}\Big]=\lim_{n\to\infty}\E\left[\prod_{z\in\mathcal{B}^{(n)}}e^{-\frac{\theta}{n}\sum_{x>z}N^{(n)}_x\un_{\{|x|_z=\an-|z|\}}}\right].
\end{align*}
By definition
\begin{align*}
    \E\left[\prod_{z\in\mathcal{B}^{(n)}}e^{-\frac{\theta}{n}\sum_{x>z}N^{(n)}_x\un_{\{|x|_z=\an-|z|\}}}\bigg|\mathcal{B}^{(n)}\right]=\prod_{z\in\mathcal{B}^{(n)}}\E\Big[e^{-\frac{\theta}{n}Z^{(1)}_{\an-|z|}}\Big].
\end{align*}
We know thanks to Lemma 2.9 in \cite{AK23LocalTimes} that in $\P$-probability, $\CardRoots/n\to W_{\infty}$ as $n\to\infty$. Hence, by Lemma \ref{Conv1ExcuUnif} \ref{Conv1ExcuUnif2} 
\begin{align*}
    \lim_{n\to\infty}\E\left[\prod_{z\in\mathcal{B}^{(n)}}e^{-\frac{\theta}{n}\sum_{x>z}N^{(n)}_x\un_{\{|x|_z=\an-|z|\}}}\right]=\lim_{n\to\infty}\E\left[\E\Big[e^{-\frac{\theta}{n}Z^{(1)}}\Big]^{\CardRoots}\right]=\Eb\left[\Big(\lim_{n\to\infty}\E\Big[e^{-\frac{\theta}{n}Z^{(1)}}\Big]^n\Big)^{W_{\infty}}\right].
\end{align*}
Recalling that $\Pb$-almost surely, $\{W_{\infty}>0\}=\{\textrm{non-extinction of }\T\}$, we have necessarily $\lim_{n\to\infty}n(1-\E[e^{-\frac{\theta}{n}Z^{(1)}_{\an}}])=\theta/(1+\theta c_0a)$ by \eqref{LimLaplceMultiMart}. Then, using that $\lim_{n\to\infty}n\P(R^{(1)}_{\an}>0)=c_0/a$ (see Theorem 1.2 in \cite{RousselinConduc} or Theorem 2.1 in \cite{AK24LocCrtiGen}), we obtain that the sequence of probability measures $(\P(Z^{(1)}_{\an}/n\in\cdot\;|Z^{(1)}_{\an}>0))_{n\geq 1}$ converges to the law of $c_0a\xi$. Moreover, for any $\varepsilon>0$
\begin{align*}
    \P\Big(\Big|\frac{1}{n}R^{(1)}_{\an}-\frac{\bm{c}_{\infty}}{n}Z^{(1)}_{\an}\Big|>\varepsilon\Big|Z^{(1)}_{\an}>0\Big)=\frac{1}{\P\big(Z^{(1)}_{\an}>0\big)}\P\Big(\Big|\frac{1}{n}R^{(1)}_{\an}-\frac{\bm{c}_{\infty}}{n}Z^{(1)}_{\an}\Big|>\varepsilon\Big).
\end{align*}
which goes to $0$ as $n\to\infty$ by Lemma \ref{Conv1ExcuUnif} \ref{Conv1ExcuUnif1}. Hence, we get that the sequence of probability measures $(\P(R^{(1)}_{\an}/n\in\cdot\;|R^{(1)}_{\an}>0))_{n\geq 1}$ converges to the law of $\bm{c}_{\infty}c_0a\xi$ and the proof is completed.
\end{proof}

\begin{proof}[Proof of Theorem \ref{GenealogyDiffCritic}]
The proof is partially inspired from the one of Theorem 2.1 in \cite{Athreya_SUB_CT} for (regular) critical Galton-Watson trees. First, we aim to prove \eqref{RecentPastCoa}, that is, for any $0<a<b$
\begin{align*}
    \lim_{n\to\infty}\P\big(|\VectCoord{\mathcal{X}}{1,n}_{\bn}\land\VectCoord{\mathcal{X}}{2,n}_{\bn}|\geq\an\big)=\E\left[\frac{\sum_{j=1}^{N_{a,b}}(\xi_j)^2}{(\sum_{j=1}^{N_{a,b}}\xi_j)^2}\Big| N_{a,b}>0\right],
\end{align*}
with $N_{a,b}=\sum_{i=1}^{N_b}G^i_{a,b}$, where $N_b$ is, under $\P^{\mathcal{E}}$, a Poisson random variable with parameter $W_{\infty}/(bc_0)$, $(G^i_{a,b})_{i\geq 1}$ is a sequence of\textrm{ i.i.d }Geometric random variables on $\N^*$ with probability of success $1-a/b$. $(\xi_j)_{j\geq 1}$ is a sequence of\textrm{ i.i.d }Exponential random variables with mean $1$. Besides, all random variables involved are independent. \\
Introduce, for any $p\geq 1$ and $1\leq k'\leq k$ 
\begin{align*}
    \mathcal{C}^{(p)}_{k,k'}:=\sum_{|x|=|y|=k}\un_{\{N_x^{(p)}\land N_y^{(p)}\geq 1,\; |x\land y|\geq k'\}},
\end{align*}
$\mathcal{C}^{(p)}_{k,k'}$ is the number of couples of vertices in generation $k$ of the tree $\mathcal{R}^{(p)}$ such that their most recent common ancestor is located in a generation larger than $k'$. Let us prove that for any $a\in(0,b)$, in law in $\P$
\begin{align}\label{ConvDoubleRange1}
    \Big(\Big(\frac{1}{n^2}\mathcal{C}^{(n)}_{\bn,\an},\; \frac{1}{n}R^{(n)}_{\bn}\Big)\Big|\; R^{(n)}_{\bn}>0\Big)\underset{n\to\infty}{\longrightarrow}\left(\Bigg(\sum_{j=1}^{N_{a,b}}\big(\bm{c}_{\infty}c_0(b-a)\xi_j\big)^2,\;\sum_{j=1}^{N_{a,b}}\bm{c}_{\infty}c_0(b-a)\xi_j\Bigg)\Bigg|\; N_{a,b}>0\right).
\end{align}
Define
\begin{align*}
    \mathbfcal{C}^{(n,p)}_{k,k'}(f):=\sum_{|u|=k'}f\Bigg(\frac{1}{n}\underset{x>u}{\sum_{|x|=k}}\un_{\{N_x^{(p)}\geq 1\}}\Bigg)\;\textrm{ and }\;C^{(n)}_{k,k'}(f):=C^{(n,n)}_{k,k'}.
\end{align*}
One can notice that, on the event $\{R^{(n)}_{\bn}>0\}$
\begin{align*}
    \frac{1}{n^2}\mathcal{C}^{(n)}_{\bn,\an}=\mathbfcal{C}^{(n)}_{\bn,\an}(\mathfrak{z}\mapsto \mathfrak{z}^2)\;\;\textrm{ and }\;\;\frac{1}{n}R^{(n)}_{\bn}=\mathbfcal{C}^{(n)}_{\bn,\an}(\mathfrak{z}\mapsto\mathfrak{z})
\end{align*}
so in order to get \eqref{ConvDoubleRange1}, it is enough to prove that
\begin{align}\label{ConvDoubleRange2}
    \lim_{n\to\infty}\E\Big[e^{-\mathbfcal{C}^{(n)}_{\bn,\an}(f)}\Big|R^{(n)}_{\bn}>0\Big]=\E\Big[e^{-\sum_{j=1}^{N_{a,b}}f(\bm{c}_{\infty}c_0(b-a)\xi_j)}\Big|N_{a,b}>0\Big],
\end{align}
with $f(\mathfrak{z})=\lambda_1\mathfrak{z}^2+\lambda_2\mathfrak{z}$ for $\lambda_1,\lambda_2\geq 0$ given. First, one can notice that 
\begin{align*}
    \E\Big[e^{-\mathbfcal{C}^{(n)}_{\bn,\an}(f)}\Big|\;R^{(n)}_{\bn}>0\Big]&=\frac{\E\Big[e^{-\mathbfcal{C}^{(n)}_{\bn,\an}(f)}\Big]}{\P\big(R^{(n)}_{\bn}>0\big)}-\frac{\P\big(R^{(n)}_{\bn}=0\big)}{\P\big(R^{(n)}_{\bn}>0\big)}.
\end{align*}
Thanks to the branching property (see Fact \ref{FactGWMulti})
\begin{align*}
    \E\Big[e^{-\mathbfcal{C}^{(n)}_{\bn,\an}(f)}\Big] =\E\Big[\prod_{u\in\mathcal{R}^{n};|u|=\an}g_{\bn-\an}\big(n,N_u^{(n)},f\big)\Big],
\end{align*}
where $g_k(n,p,f)$ is defined in \eqref{Defg}, and we recall that $f(\mathfrak{z})=\lambda_1\mathfrak{z}^2+\lambda_2\mathfrak{z}$ for $\lambda_1,\lambda_2\geq 0$ given. We now decompose $g_{\bn-\an}(n,p,f)$ as follows: for any positive integer $p\leq n\log n$
\begin{align*}
    g_{\bn-\an}(n,p,f)&=\E\Big[e^{-f(\frac{1}{n}R^{(p)}_{\bn-\an})}\un_{\mathfrak{S}_n}\Big]+\E\Big[e^{-f(\frac{1}{n}R^{(p)}_{\bn-\an})}\big(1-\un_{\mathfrak{S}_n}\big)\Big] \\ & =\E\Big[e^{-f(\frac{1}{n}\Tilde{R}^{(p)}_{\bn-\an})}\un_{\mathfrak{S}_n}\Big]+\E\Big[e^{-f(\frac{1}{n}R^{(p)}_{\bn-\an})}\big(1-\un_{\mathfrak{S}_n}\big)\Big].
\end{align*}
Note that for any $p\geq 1$
\begin{align*}
    f\Big(\frac{1}{n}\Tilde{R}^{(p)}_{\bn-\an}\Big)=&\sum_{j=1}^p\Bigg(\frac{\lambda_1}{n^2}\Big(\sum_{|x|=\bn-\an}\un_{\{N_x^{(j)}-N_x^{(j-1)}\geq 1\}}\Big)^2+\frac{\lambda_2}{n}\sum_{|x|=\bn-\an}\un_{\{N_x^{(j)}-N_x^{(j-1)}\geq 1\}}\Bigg) \\ & +\frac{\lambda_1}{n^2}\sum_{i\not=j=1}^p\;\sum_{|x|=|y|=\bn-\an}\un_{\{N_x^{(i)}-N_x^{(i-1)}\geq 1,\; N_y^{(j)}-N_y^{(j-1)}\geq 1\}}.
\end{align*}
Let us first provide a lower bound for $\E[e^{-\mathbfcal{C}^{(n)}_{\bn,\an}(f)}]$. Recall that thanks to the strong Markov property, $(\sum_{|x|=\bn-\an}\un_{\{N_x^{(j)}-N_x^{(j-1)}\geq 1\}};\; 1\leq j\leq p)$ is a collection of $p$\textrm{ i.i.d }copies of $R^{(1)}_{\bn-\an}$ under $\P^{\mathcal{E}}$. Hence, by the FKG inequality, $\E^{\mathcal{E}}[e^{-f(\frac{1}{n}\Tilde{R}^{(p)}_{\bn-\an})}]$ is larger than
\begin{align*}
    &\E^{\mathcal{E}}\Big[e^{-\lambda_1\big(\frac{1}{n}R^{(1)}_{\bn-\an}\big)^2-\frac{\lambda_2}{n}R^{(1)}_{\bn-\an}}\Big]^p\E^{\mathcal{E}}\Big[e^{-\frac{\lambda_1}{n^2}\sum_{i\not=j=1}^p\;\sum_{|x|=|y|=\bn-\an}\un_{\{N_x^{(i)}-N_x^{(i-1)}\geq 1,\; N_y^{(j)}-N_y^{(j-1)}\geq 1\}}}\Big] \\ & \geq \E^{\mathcal{E}}\Big[e^{-f\big(\frac{1}{n}R^{(1)}_{\bn-\an}\big)}\Big]^pe^{-\frac{\lambda_1p^2}{n^2}\sum_{|x|=|y|=\bn-\an}\P^{\mathcal{E}}(N_x^{(1)}\geq 1)\P^{\mathcal{E}}(N_y^{(1)}\geq 1)},
\end{align*}
where we have used the Jensen inequality and the definition of $f$ for the last inequality. Using again that $\P^{\mathcal{E}}(N^{(1)}_z\geq 1)\leq e^{-V(z)}$, we obtain
\begin{align*}
    \E\Big[e^{-f(\frac{1}{n}\Tilde{R}^{(p)}_{\bn-\an})}\Big]\geq\Eb\left[\E^{\mathcal{E}}\Big[e^{-f\big(\frac{1}{n}R^{(1)}_{\bn-\an}\big)}\Big]^pe^{-\frac{\lambda_1p^2}{n^2}(W_{\bn-\an})^2}\right].
\end{align*}
Let $\delta_2:=\delta_1/4$ and $\delta_3:=\delta_1/(4+2\delta_1)$, where $\delta_1>0$ comes from Assumption \ref{Assumption1}. We have
\begin{align*}
    &\Eb\left[\E^{\mathcal{E}}\Big[e^{-f\big(\frac{1}{n}R^{(1)}_{\bn-\an}\big)}\Big]^pe^{-\frac{\lambda_1p^2}{n^2}(W_{\bn-\an})^2}\right] \\[0.7em] & \geq e^{-\frac{\lambda_1p^2}{n^{1+\delta_3}}}\Eb\left[\E^{\mathcal{E}}\Big[e^{-f\big(\frac{1}{n}R^{(1)}_{\bn-\an}\big)}\Big]^p\un_{\{(W_{\bn-\an})^{2+\delta_1}\leq n^{1+\delta_2}\}}\right] \\[0.7em] & \geq e^{-\frac{\lambda_1p^2}{n^{1+\delta_3}}}\Eb\left[\E^{\mathcal{E}}\Big[e^{-f\big(\frac{1}{n}R^{(1)}_{\bn-\an}\big)}\Big]\un_{\{(W_{\bn-\an})^{2+\delta_1}\leq n^{1+\delta_2}\}}\right]^p,
\end{align*}
where we have used again the Jensen inequality for the second inequality. Hence, $g_{\bn-\an}(n,p,f)$ is larger than 
\begin{align*}
    &e^{-\frac{\lambda_1p^2}{n^{1+\delta_3}}}\Eb\left[\E^{\mathcal{E}}\Big[e^{-f\big(\frac{1}{n}R^{(1)}_{\bn-\an}\big)}\Big]\un_{\{(W_{\bn-\an})^{2+\delta_1}\leq n^{1+\delta_2}\}}\right]^p-\big(1-\P(\mathfrak{S}_n)\big) \\[0.7em] & \geq e^{-\frac{\lambda_1p^2}{n^{1+\delta_3}}}\Eb\left[\E^{\mathcal{E}}\Big[e^{-f\big(\frac{1}{n}R^{(1)}_{\bn-\an}\big)}\Big]\un_{\{(W_{\bn-\an})^{2+\delta_1}\leq n^{1+\delta_2}\}}\right]^p-C_{\eqref{SingleExcu}}n^6e^{(\bn-\an)\psi(2)} \\[0.7em] & \geq e^{-\frac{\lambda_1p^2}{n^{1+\delta_3}}}\Big(g_{\bn-\an}(n,1,f)-\Pb\big((W_{\bn-\an})^{2+\delta_1}>n^{1+\delta_2}\big)\Big)^p-C_{\eqref{SingleExcu}}n^6e^{(\bn-\an)\psi(2)} \\[0.7em] & \geq e^{-\frac{\lambda_1p^2}{n^{1+\delta_3}}}\Big(g_{\bn-\an}(n,1,f)-\frac{\mathfrak{C}_{\infty}}{n^{1+\delta_2}}\Big)^p-C_{\eqref{SingleExcu}}n^6e^{(\bn-\an)\psi(2)},
\end{align*}
where we have used Lemma \ref{SingleExcu} for the second inequality and we have set $\mathfrak{C}_{\infty}:=\sup_{k\geq 0}\Eb[(W_k)^{2+\delta_1}]\in(0,\infty)$, which is well defined by Assumption \ref{Assumption1}. Note that for $n$ large enough and any $p\leq n\log n$
\begin{align}\label{Ingp1}
    e^{-\frac{\lambda_1p^2}{n^{1+\delta_3}}}\Big(g_{\bn-\an}(n,1,f)-\frac{\mathfrak{C}_{\infty}}{n^{1+\delta_2}}\Big)^p\geq C_{\eqref{SingleExcu}}n^8e^{(\bn-\an)\psi(2)}.
\end{align}
Indeed, we know thanks to Lemma \ref{ConvLoiCondi} that
\begin{align}\label{ConvLaplacef}
    \lim_{n\to\infty}n\big(1-g_{\bn-\an}(n,1,f)\big)=\frac{1-\phi_{a,b}(f)}{c_0(b-a)},
\end{align}
where $\phi_{a,b}(f):=\int_0^{\infty}e^{-f(\bm{c}_{\infty}c_0(b-a)t)-t}\mathrm{d}t$. In particular, for $n$ large enough, we have $g_{\bn-\an}(n,1,f)\geq 1-\frac{1}{2nc_0(b-a)}(1-\phi_{a,b}(f))$ and clearly, $\frac{1}{2nc_0(b-a)}(1-\phi_{a,b}(f))\geq\frac{\mathfrak{C}_{\infty}}{n^{1+\delta_2}}$ so for any $p\leq n\log n$ and $n$ large enough, we have
\begin{align*}
    e^{-\frac{\lambda_1p^2}{n^{1+\delta_3}}}\Big(g_{\bn-\an}(n,1,f)-\frac{\mathfrak{C}_{\infty}}{n^{1+\delta_2}}\Big)^p&\geq e^{-n^{1-\delta_2}(\log n)^2}\Big(1-\frac{1}{4nc_0}\big(1-\phi_{a,b}(f)\big)\Big)^{n\log n} \\ & \geq C_{\eqref{SingleExcu}}n^8e^{(\bn-\an)\psi(2)}.
\end{align*}
In particular, we deduce from \eqref{Ingp1} that for $n$ large enough and $1\leq p\leq n\log n$
\begin{align*}
    g_{n,a}(p,f)&\geq e^{-\frac{\lambda_1p^2}{n^{1+\delta_3}}}\Big(g_{\bn-\an}(n,1,f)-\frac{\mathfrak{C}_{\infty}}{n^{1+\delta_2}}\Big)^p\big(1-n^{-2}\big),
\end{align*}
and then $\E[e^{-\mathbfcal{C}^{(n)}_{\bn,\an}(f)}]$ is larger than
\begin{align}\label{EspBorneInf2}
    &\E\Big[e^{-\lambda_1\sum_{|u|=\an}\frac{(N^{(n)}_u)^2}{n^{1+\delta_2}}}\Big(g_{\bn-\an}(n,1,f)-\frac{\mathfrak{C}_{\infty}}{n^{1+\delta_2}}\Big)^{Z^{(n)}_{\an}}\big(1-n^{-2}\big)^{R^{(n)}_{\an}}\un_{\{\max_{|z|=\an}N^{(n)}_z\leq n\log n\}}\Big]\nonumber \\[0.7em] & \geq\E\Big[e^{-\lambda_1\sum_{|u|=\an}\frac{(N^{(n)}_u)^2}{n^{1+\delta_2}}}\Big(g_{\bn-\an}(n,1,f)-\frac{\mathfrak{C}_{\infty}}{n^{1+\delta_2}}\Big)^{Z^{(n)}_{\an}}\big(1-n^{-2}\big)^{Z^{(n)}_{\an}}\Big]-\P\Big(\max_{|z|=\an}N^{(n)}_z>n\log n\Big).
\end{align}
As we have already mentioned, we have $\E[\sum_{|u|=\an}(N^{(n)}_u)^2]\leq\mathfrak{C}_an$ for some constant $\mathfrak{C}_a>0$ and $n$ large enough. In particular, $\lim_{n\to\infty}\E[\sum_{|u|=\an}(N^{(n)}_u)^2]/n^{1+\delta_2}=0$. Moreover, since $\E[\sum_{|z|=\an}N^{(n)}_z]=n$, we deduce that $\lim_{n\to\infty}\P(\max_{|z|=\an}N^{(n)}_z>n\log n)=0$. Therefore, thanks to Theorem \ref{ThMultiMar}, claiming in particular that $(Z^{(n)}_{\an}/n)_{n\geq 1}$ converges in law under $\P$ to $W_{\infty}\mathcal{Y}_{a/W_{\infty}}$ and together with \eqref{ConvLaplacef}, we finally obtain
\begin{align}\label{liminfLaplace}
    \liminf_{n\to\infty}\E\Big[e^{-\mathbfcal{C}^{(n)}_{\bn,\an}(f)}\Big]\geq\E\Big[e^{-W_{\infty}\mathcal{Y}_{a/W_{\infty}}\frac{1-\phi_{a,b}(f)}{c_0(b-a)}}\Big]=\Eb\Big[e^{-\frac{W_{\infty}(1-\phi_{a,b}(f))}{c_0(b-a\phi_{a,b}(f))}}\Big], 
\end{align}
where we have used \eqref{LaplaceCSBP} for the equality. \\
Let us now provide an upper bound for $\E[e^{-\mathbfcal{C}^{(n)}_{\bn,\an}(f)}]$. Clearly
\begin{align*}
    \E\Big[e^{-\mathbfcal{C}^{(n)}_{\bn,\an}(f)}\Big]\leq&\E\Big[e^{-\sum_{j=1}^p\big(\frac{\lambda_1}{n^2}\big(\sum_{|x|=\bn-\an}\un_{\{N_x^{(j)}-N_x^{(j-1)}\geq 1\}}\big)^2+\frac{\lambda_1}{n}\sum_{|x|=\bn-\an}\un_{\{N_x^{(j)}-N_x^{(j-1)}\geq 1\}}\big)}\Big] \\ & =\Eb\left[\E^{\mathcal{E}}\Big[e^{-f(\frac{1}{n}R^{(1)}_{\bn-\an})}\Big]^p\right].
\end{align*}
Using the convexity of the exponential first and the fact that $e^{-t}\leq 1-t+t^2$ for any $t\geq 0$ in a second time, we have
\begin{align*}
    \E^{\mathcal{E}}\Big[e^{-f(\frac{1}{n}R^{(1)}_{\bn-\an})}\Big]^p&\leq e^{-p\big(1-\E^{\mathcal{E}}\big[e^{-f(\frac{1}{n}R^{(1)}_{\bn-\an})}\big]\big)} \\ & \leq 1-p\Big(1-\E^{\mathcal{E}}\big[e^{-f(\frac{1}{n}R^{(1)}_{\bn-\an})}\big]\Big)+p^2\Big(1-\E^{\mathcal{E}}\big[e^{-f(\frac{1}{n}R^{(1)}_{\bn-\an})}\big]\Big)^2,
\end{align*}
thus giving 
\begin{align*}
    \E\Big[e^{-f(\frac{1}{n}\Tilde{R}^{(p)}_{\bn-\an})}\Big]\leq1-p\big(1-g_{\bn-\an}(n,1,f))+p^2\Eb\Big[\Big(1-\E^{\mathcal{E}}\Big[e^{-f(\frac{1}{n}R^{(1)}_{\bn-\an})}\Big]\Big)^2\Big]    
\end{align*}
that is, using again the convexity of the exponential
\begin{align*}
     g_{\bn-\an}(n,p,f)&\leq e^{-p\big(1-g_{\bn-\an}(n,1,f)\big)}e^{p^2\Eb\big[\big(1-\E^{\mathcal{E}}\big[e^{-f(\frac{1}{n}R^{(1)}_{\bn-\an})}\big]\big)^2\big]}+\big(1-\P(\mathfrak{S}_n)\big) \\ & \leq e^{-p\big(1-g_{\bn-\an}(n,1,f)\big)}e^{p^2\Eb\big[\big(1-\E^{\mathcal{E}}\big[e^{-f(\frac{1}{n}R^{(1)}_{\bn-\an})}\big]\big)^2\big]}+C_{\eqref{SingleExcu}}n^6e^{(\bn-\an)\psi(2)},
\end{align*}
where we have used Lemma \ref{SingleExcu} for the second inequality. Note that for any $1\leq p\leq n\log n$ and $n$ large enough
\begin{align*}
    e^{-p\big(1-g_{\bn-\an}(n,1,f)\big)}e^{p^2\Eb\big[\big(1-\E^{\mathcal{E}}\big[e^{-f(\frac{1}{n}R^{(1)}_{\bn-\an})}\big]\big)^2\big]}&\geq e^{-n\log n\big(1-g_{\bn-\an}(n,1,f)\big)} \\ & \geq C_{\eqref{SingleExcu}}n^8e^{(\bn-\an)\psi(2)},
\end{align*}
where the last inequality comes from \eqref{ConvLaplacef}. Hence, for any $1\leq p\leq n\log n$ and $n$ large enough
\begin{align*}
     g_{\bn-\an}(n,p,f)\leq e^{-p\big(1-g_{\bn-\an}(n,1,f)\big)}e^{p^2\Eb\big[\big(1-\E^{\mathcal{E}}\big[e^{-f(\frac{1}{n}R^{(1)}_{\bn-\an})}\big]\big)^2\big]}\big(1+n^{-2}\big),
\end{align*}
thus giving that $\E[e^{-\mathbfcal{C}^{(n)}_{\bn,\an}(f)}]$ is smaller than
\begin{align*}
    &\E\Big[e^{-Z^{(n)}_{\an}\big(1-g_{\bn-\an}(n,1,f)\big)}e^{\sum_{|u|=\an}\big(N^{(n)}_u\big)^2\Eb\big[\big(1-\E^{\mathcal{E}}\big[e^{-f(\frac{1}{n}R^{(1)}_{\bn-\an})}\big]\big)^2\big]}\big(1+n^{-2}\big)^{Z^{(n)}_{\an}}\Big] \\ & +\P\Big(\max_{|u|=\an}N^{(n)}_u>n\log n\Big).
\end{align*}
Again, we have $\E[\sum_{|u|=\an}(N^{(n)}_u)^2]\leq\mathfrak{C}_an$ for some constant $\mathfrak{C}_a>0$ and $n$ large enough and this implies that $(\sum_{|u|=\an}(N^{(n)}_u)^2/n^2)_{n\geq 1}$ converges to $0$ in $\P$-probability. We now would like to prove that $(n^2\Eb[(1-\E^{\mathcal{E}}[e^{-f(\frac{1}{n}R^{(1)}_{\bn-\an})}])^2])_{n\geq 1}$ is bounded. Note that $f(0)=0$ so one can see that
\begin{align*}
    n\Big(1-\E^{\mathcal{E}}\Big[e^{-f(\frac{1}{n}R^{(1)}_{\bn-\an})}\Big]\Big)=n\P^{\mathcal{E}}\big(R^{(1)}_{\bn-\an}>0\big)\Big(1-\E^{\mathcal{E}}\Big[e^{-f(\frac{1}{n}R^{(1)}_{\bn-\an})}\Big|R^{(1)}_{\bn-\an}>0\Big]\Big),
\end{align*}
which yields
\begin{align*}
    \Eb\left[n^2\Big(1-\E^{\mathcal{E}}\Big[e^{-f(\frac{1}{n}R^{(1)}_{\bn-\an})}\Big]\Big)^2\right]\leq n^2\P\big(R^{(1)}_{\bn-\an}>0\big)^2\;\Eb\left[\left(\frac{\P^{\mathcal{E}}\big(R^{(1)}_{\bn-\an}>0\big)}{\P\big(R^{(1)}_{\bn-\an}>0\big)}\right)^2\right],
\end{align*}
which converges to $\Eb[(W_{\infty})^2]/c_0^2\in(0,\infty)$, see Theorem 1.2 in \cite{RousselinConduc} and Theorem 2.1 in \cite{AK24LocCrtiGen} more recently. Therefore, $(n^2\Eb[(1-\E^{\mathcal{E}}[e^{-f(\frac{1}{n}R^{(1)}_{\bn-\an})}])^2])_{n\geq 1}$ is bounded and then, in $\P$-probability
\begin{align*}
    \sum_{|u|=\an}\big(N_u^{(n)}\big)\Eb^*\left[n^2\Big(1-\E^{\mathcal{E}}\Big[e^{-f(\frac{1}{n}R^{(1)}_{\bn-\an})}\Big]\Big)^2\right]\underset{n\to\infty}{\longrightarrow}0.
\end{align*}
Hence, thanks to Theorem \ref{ThMultiMar}, claiming in particular that $(Z^{(n)}_{\an}/n)_{n\geq 1}$ converges in law under $\P$ to $W_{\infty}\mathcal{Y}_{a/W_{\infty}}$ and together with \eqref{ConvLaplacef}, we finally obtain
\begin{align*}
    \limsup_{n\to\infty}\E\Big[e^{-\mathbfcal{C}^{(n)}_{\bn,\an}(f)}\Big]\leq\E\Big[e^{-W_{\infty}\mathcal{Y}_{a/W_{\infty}}\frac{1-\phi_{a,b}(f)}{c_0(b-a)}}\Big]=\Eb\Big[e^{-\frac{W_{\infty}(1-\phi_{a,b}(f))}{c_0(b-a\phi_{a,b}(f))}}\Big],
\end{align*}
that is, by \eqref{liminfLaplace}
\begin{align*}
    \lim_{n\to\infty}\E\Big[e^{-\mathbfcal{C}^{(n)}_{\bn,\an}(f)}\Big]=\Eb\Big[e^{-\frac{W_{\infty}(1-\phi_{a,b}(f))}{c_0(b-a\phi_{a,b}(f))}}\Big].
\end{align*}
To complete the proof of \eqref{ConvDoubleRange2}, we are left to check that 
\begin{align*}
    \frac{\Eb\Big[e^{-\frac{W_{\infty}(1-\phi_{a,b}(f))}{c_0(b-a\phi_{a,b}(f))}}\Big]}{\P\big(R^{(n)}_{\bn}>0\big)}-\frac{\P\big(R^{(n)}_{\bn}=0\big)}{\P^*\big(R^{(n)}_{\bn}>0\big)}\underset{n\to\infty}{\longrightarrow}\E\Big[e^{-\sum_{j=1}^{N_{a,b}}f(\bm{c}_{\infty}c_0(b-a)\xi_j)}\Big|N_{a,b}>0\Big].
\end{align*}
We have
\begin{align*}
    \P\big(R^{(n)}_{\bn}>0\big)=1-\Eb\Big[\Big(1-\P^{\mathcal{E}}\big(R^{(1)}_{\bn}>0\big)\Big)^n\Big]\underset{n\to\infty}{\longrightarrow}\Eb\Big[1-e^{-\frac{W_{\infty}}{bc_0}}\Big].
\end{align*}
Hence
\begin{align*}
    \frac{\Eb\Big[e^{-\frac{W_{\infty}(1-\phi_{a,b}(f))}{c_0(b-a\phi_{a,b}(f))}}\Big]}{\P\big(R^{(n)}_{\bn}>0\big)}-\frac{\P\big(R^{(n)}_{\bn}=0\big)}{\P\big(R^{(n)}_{\bn}>0\big)}\underset{n\to\infty}{\longrightarrow}\frac{\Eb\Big[e^{-\frac{W_{\infty}(1-\phi_{a,b}(f))}{c_0(b-a\phi_{a,b}(f))}}\Big]}{\Eb\Big[1-e^{-\frac{W_{\infty}}{bc_0}}\Big]}-\frac{\Eb\Big[e^{-\frac{W_{\infty}}{bc_0}}\Big]}{\Eb\Big[1-e^{-\frac{W_{\infty}}{bc_0}}\Big]}.
\end{align*}
Besides 
\begin{align*}
    \E\Big[e^{-\sum_{j=1}^{N_{a,b}}f(\bm{c}_{\infty}c_0(b-a)\xi_j)}\Big|N_{a,b}>0\Big]=\frac{\E\Big[e^{-\sum_{j=1}^{N_{a,b}}f(\bm{c}_{\infty}c_0(b-a)\xi_j)}\Big]}{\P\big(N_{a,b}>0\big)}-\frac{\P\big(N_{a,b}=0\big)}{\P\big(N_{a,b}>0\big)},
\end{align*}
and recalling that $N_{a,b}=\sum_{i=1}^{N_b}G^i_{a,b}$, where $N_b$ is, under $\P^{\mathcal{E}}$, a Poisson random variable with parameter $W_{\infty}/(bc_0)$, $(G^i_{a,b})_{i\geq 1}$ is a sequence of\textrm{ i.i.d }Geometric random variables on $\N^*$ with probability of success $1-a/b$. $(\xi_j)_{j\geq 1}$ is a sequence of\textrm{ i.i.d }Exponential random variables with mean $1$, we clearly have that $\P(N_{a,b}>0)=\P(N_b>0)=\Eb[1-e^{-\frac{W_{\infty}}{bc_0}}]$ and
\begin{align*}
    \E^{\mathcal{E}}\Big[e^{-\sum_{j=1}^{N_{a,b}}f(\bm{c}_{\infty}c_0(b-a)\xi_j)}\Big]=\E^{\mathcal{E}}\Big[\phi_{a,b}(f)^{N_{a,b}}\Big]=\E^{\mathcal{E}}\Big[\Big(\frac{(b-a)\phi_{a,b}(f)}{b-a\phi_{a,b}(f)}\Big)^{N_b}\Big]&=e^{-\frac{W_{\infty}}{bc_0}\big(1-\frac{(b-a)\phi_{a,b}(f)}{b-a\phi_{a,b}(f)}\big)} \\ & =e^{-\frac{W_{\infty}(1-\phi_{a,b}(f))}{c_0(b-a\phi_{a,b}(f))}},
\end{align*}
and this completes the proof of \eqref{ConvDoubleRange2}. We can now conclude. By definition (see \eqref{DefSampling}) and thanks to \eqref{ConvDoubleRange1}, we have
\begin{align*}
    \P\big(|\VectCoord{\mathcal{X}}{1,n}_{\bn}\land\VectCoord{\mathcal{X}}{2,n}_{\bn}|\geq\an\big)=\E\left[\frac{\mathcal{C}^{(n)}_{\bn,\an}}{\big( R^{(n)}_{\bn}\big)^2}\Big|\; R^{(n)}_{\bn}>0\right]\underset{n\to\infty}{\longrightarrow}\E\left[\frac{\sum_{j=1}^{N_{a,b}}(\xi_j)^2}{(\sum_{j=1}^{N_{a,b}}\xi_j)^2}\Big| N_{a,b}>0\right],
\end{align*}
which finally gives \eqref{RecentPastCoa}. \\
For the alternative expression \eqref{ExpressionSerie}, we know thanks to section 3.1 in \cite{HarrisJohnstonRoberts1} that for any integer $\ell\geq 1$, $\E^{\mathcal{E}}[\sum_{j=1}^{\ell}(\xi_j)^2/(\sum_{j=1}^{\ell}\xi_j)^2]=2/(\ell+1)$. Moreover, one can prove that for any $\ell\geq 1$, $\P^{\mathcal{E}}(N_{a,b}=\ell)=\sum_{j=1}^{\ell}\frac{1}{j!}(W_{\infty}/(bc_0))^je^{-\frac{W_{\infty}}{bc_0}}\binom{\ell-1}{j-1}a^{\ell-j}(b-a)^j/b^{\ell}$, thus giving the result.\\
We now prove \eqref{RemotePastCoa}, that is, for any $m\in\N^*$ 
\begin{align*}
    \lim_{n\to\infty}\P\big(|\VectCoord{\mathcal{X}}{1,n}_{\bn}\land\VectCoord{\mathcal{X}}{2,n}_{\bn}|<m\big) =1-\E\left[\frac{\sum_{|u|=m}\Big(Z^{(u)}_{\infty}\mathcal{Y}^{(u)}_{b/Z^{(u)}_{\infty}}\Big)^2}{\Big(\sum_{|u|=m}Z^{(u)}_{\infty}\mathcal{Y}^{(u)}_{b/Z^{(u)}_{\infty}}\Big)^2}\Bigg|\max_{|u|=m}\mathcal{Y}^{(u)}_{b/Z^{(u)}_{\infty}}>0\right],
\end{align*}
where we recall that for almost-every environment $\mathcal{E}$, $(\mathcal{Y}^{(z)};\; |z|=m)$ are\textrm{ i.i.d }copies under $\P^{\mathcal{E}}$ of $\mathcal{Y}$, $(W^{(z)}_{\infty};\; |z|=m)$ are\textrm{ i.i.d }copies of $W_{\infty}$ and independent of $((z,V(z));\; |z|=m)$ and $Z^{(u)}_{\infty}=e^{-V(u)}W^{(u)}_{\infty}$. We first show that, in law under $\P$ 
\begin{align}\label{ConvDoubleRange}
    &\Big(\Big(\frac{1}{n^2}\mathcal{C}^{(n)}_{\bn,m},\; \frac{1}{n}R^{(n)}_{\bn}\Big)\Big|\; R^{(n)}_{\bn}>0\Big)\underset{n\to\infty}{\longrightarrow}\nonumber \\[0.7em] & \Big(\Big(\sum_{|u|=m}\Big(\bm{c}_{\infty}Z^{(u)}_{\infty}\mathcal{Y}^{(u)}_{b/Z^{(u)}_{\infty}}\Big)^2,\;\sum_{|u|=m}\bm{c}_{\infty}Z^{(u)}_{\infty}\mathcal{Y}^{(u)}_{b/Z^{(u)}_{\infty}}\Big)\Big|\; \max_{|u|=m}\mathcal{Y}^{(u)}_{b/Z^{(u)}_{\infty}}>0\Big),
\end{align}
where we recall that for any $p\geq 1$ and $1\leq k'\leq k$ 
\begin{align*}
    \mathcal{C}^{(p)}_{k,k'}=\sum_{|x|=|y|=k}\un_{\{N_x^{(p)}\land N_y^{(p)}\geq 1,\; |x\land y|\geq k'\}},
\end{align*}
which is introduced in the proof of \eqref{RecentPastCoa}. The convergence in \eqref{ConvDoubleRange} immediately gives \eqref{RemotePastCoa}. Let $f:\R\to\R$ be a continuous, non-negative and monotonic function such that $f(0)=0$ and recall that
\begin{align*}
    \mathbfcal{C}^{(n,p)}_{k,k'}(f)=\sum_{|u|=k'}f\Big(\frac{1}{n}\underset{x>u}{\sum_{|x|=k}}\un_{\{N_x^{(p)}\geq 1\}}\Big)\;\textrm{ and }\;\mathbfcal{C}^{(n)}_{k,k'}(f)=\mathbfcal{C}^{(n,n)}_{k,k'}(f),
\end{align*}
see below equation \eqref{ConvDoubleRange1}. We have
\begin{align}\label{ConvLoiCoaRoot}
    \lim_{n\to\infty}\E\Big[e^{-\mathbfcal{C}^{(n)}_{m,\bn}(f)}\Big|\; R^{(n)}_{\bn}>0\Big]=\E\left[e^{-\sum_{|u|=m}f\big(Z^{(u)}_{\infty}\mathcal{Y}^{(u)}_{b/Z^{(u)}_{\infty}}\big)}\Big|\; \max_{|u|=m}\mathcal{Y}^{(u)}_{b/Z^{(u)}_{\infty}}>0\right].
\end{align}
Indeed, we have
\begin{align*}
    \E\Big[e^{-\mathbfcal{C}^{(n)}_{m,\bn}(f)}\Big|\;R^{(n)}_{\bn}>0\Big]=\frac{\E\Big[e^{-\mathbfcal{C}^{(n)}_{m,\bn}(f)}\Big]}{\P\big(R^{(n)}_{\bn}>0\big)}-\frac{\P\big(R^{(n)}_{\bn}=0\big)}{\P\big(R^{(n)}_{\bn}>0\big)}.
\end{align*}
Thanks to the branching property (see Fact \ref{FactGWMulti})
\begin{align*}
    \E\Big[e^{-\mathbfcal{C}^{(n)}_{m,\bn}(f)}\Big]=\E\Big[\prod_{u\in\mathcal{R}^{n};|u|=m}g_{\bn-m}\big(n,N_u^{(n)},f\big)\Big].
\end{align*}
Thanks to Theorem \ref{ThMultiMar}, we have, for any $t\geq 0$ that $\lim_{n\to\infty}g_{\bn-m}(n,\tn,f)=g(t,f)$ with $g(t,f)=\E[\exp(-f(\bm{c}_{\infty}\mathcal{Z}_b(tW_{\infty})))]$. Moreover, $t\mapsto g_{\bn-m}(n,\tn,f)$ is monotonic and $t\mapsto g(t,f)$ is continuous so Dini's Theorem yields $\lim_{n\to\infty}\sup_{t\in[0,M]}|g_{\bn-m}(n,\tn,f)-g(t,f)|=0$ for any $M>0$. One can see that for any $m\geq 1$, $\lim_{n\to\infty}\max_{|u|=m}|N^{(n)}_u/n-e^{-V(u)}|$ in $\P$-probability and recall that we have assumed (see section \ref{RBRWT}) that the set $\{u\in\T;\; |u|=m\}$ is finite for all $m\geq 1$, $\Pb$-almost surely. Therefore, dominated convergence theorem yields
\begin{align*}
    \lim_{n\to\infty}\E\Big[e^{-\mathbfcal{C}^{(n)}_{m,\bn}(f)}\Big]=\lim_{n\to\infty}\E\Big[\prod_{u\in\mathcal{R}^{n};|u|=m}g_{\bn-m}\big(n,N_u^{(n)},f\big)\Big]=\Eb\Big[\prod_{|u|=m}g\big(e^{-V(u)},f\big)\Big],
\end{align*}
which is nothing but $\E[\exp(-\sum_{|u|=m}f\big(Z^{(u)}_{\infty}\mathcal{Y}^{(u)}_{b/Z^{(u)}_{\infty}}))]$. Moreover, we have that $\P(R^{(n)}_{\bn}>0)\to\Eb[1-e^{-\frac{W_{\infty}}{bc_0}}]$ as $n\to\infty$, thus giving 
\begin{align*}
    \lim_{n\to\infty}\E\Big[e^{-\mathbfcal{C}^{(n)}_{m,\bn}(f)}\Big|\;R^{(n)}_{\bn}>0\Big]=&\frac{1}{\Eb\Big[1-e^{-\frac{W_{\infty}}{bc_0}}\Big]}\E\left[e^{-\sum_{|u|=m}f\big(Z^{(u)}_{\infty}\mathcal{Y}^{(u)}_{b/Z^{(u)}_{\infty}}\big)}\right]-\frac{1-\Eb\Big[e^{-\frac{W_{\infty}}{bc_0}}\Big]}{\Eb\Big[1-e^{-\frac{W_{\infty}}{bc_0}}\Big]}.
\end{align*}
Finally, note that 
\begin{align*}
    \P^{\mathcal{E}}\Big(\max_{|u|=m}\mathcal{Y}^{(u)}_{b/Z^{(u)}_{\infty}}>0\Big)=1-\prod_{|u|=m}\P^{\mathcal{E}}\Big(\mathcal{Y}_{b/Z^{(u)}_{\infty}}=0\Big)=1-e^{-\frac{1}{bc_0}\sum_{|u|=m}Z^{(u)}_{\infty}},
\end{align*}
and since $\sum_{|u|=m}Z^{(u)}_{\infty}$ and $W_{\infty}$ have the same distribution under $\Pb$ (recall that $W_{\infty}$ satisfies \eqref{SmoothingTransform}), the latter limit is noting but $\E[\exp(-\sum_{|u|=m}f\big(Z^{(u)}_{\infty}\mathcal{Y}^{(u)}_{b/Z^{(u)}_{\infty}}))|\; \max_{|u|=m}\mathcal{Y}^{(u)}_{b/Z^{(u)}_{\infty}}>0]$ and \eqref{ConvLoiCoaRoot} is proved. To get $\eqref{ConvDoubleRange}$, one can see that, on the event $\{R^{n}_{\bn}>0\}$
\begin{align*}
    R^{(n)}_{\bn}=\sum_{|u|=m}\;\underset{x>u}{\sum_{|x|=\bn}}\un_{\{N_x^{(n)}\geq 1\}}\;\;\textrm{ and }\;\;\mathcal{C}^{(n)}_{\bn,m}=\sum_{|u|=m}\Big(\underset{x>u}{\sum_{|x|=\bn}}\un_{\{N_x^{(n)}\geq 1\}}\Big)^2.
\end{align*}
Hence, taking $f(\mathfrak{z})=\lambda_1\mathfrak{z}+\lambda_2\mathfrak{z}^2$ with $\lambda_1,\lambda_2\geq 0$ in \eqref{ConvLoiCoaRoot} gives the convergence in law \eqref{ConvDoubleRange}. \\
For the alternative expression \eqref{RemotePastCoa2}, one can see that
\begin{align*}
    &\E\left[\frac{\sum_{|u|=m}\Big(Z^{(u)}_{\infty}\mathcal{Y}^{(u)}_{b/Z^{(u)}_{\infty}}\Big)^2}{\Big(\sum_{|u|=m}Z^{(u)}_{\infty}\mathcal{Y}^{(u)}_{b/Z^{(u)}_{\infty}}\Big)^2}\Bigg|\max_{|u|=m}\mathcal{Y}^{(u)}_{b/Z^{(u)}_{\infty}}>0\right] \\ & =\frac{1}{1-\Eb\Big[e^{-\frac{W_{\infty}}{bc_0}}\Big]}\Eb\left[\sum_{|v|=m}\E^{\mathcal{E}}\left[\frac{\Big(Z^{(v)}_{\infty}\mathcal{Y}^{(v)}_{b/Z^{(v)}_{\infty}}\Big)^2}{\Big(\sum_{|u|=m}Z^{(u)}_{\infty}\mathcal{Y}^{(u)}_{b/Z^{(u)}_{\infty}}\Big)^2}\un_{\big\{\mathcal{Y}^{(v)}_{b/Z^{(v)}_{\infty}}>0\big\}}\right]\right].
\end{align*}
Using that $\frac{1}{t^2}=\int_0^{\infty}\lambda e^{-\lambda t}\mathrm{d}\lambda$, we have, for any $v\in\T$ such that $|v|=m$
\begin{align*}
    \E^{\mathcal{E}}\left[\frac{\Big(Z^{(v)}_{\infty}\mathcal{Y}^{(v)}_{b/Z^{(v)}_{\infty}}\Big)^2}{\Big(\sum_{|u|=m}Z^{(u)}_{\infty}\mathcal{Y}^{(u)}_{b/Z^{(u)}_{\infty}}\Big)^2}\un_{\big\{\mathcal{Y}^{(v)}_{b/Z^{(v)}_{\infty}}>0\big\}}\right]=\int_0^{\infty}\lambda\E^{\mathcal{E}}\left[\Big(Z^{(v)}_{\infty}\mathcal{Y}^{(v)}_{b/Z^{(v)}_{\infty}}\Big)^2e^{-\lambda\sum_{|u|=m}Z^{(u)}_{\infty}\mathcal{Y}^{(u)}_{b/Z^{(u)}_{\infty}}}\right]\mathrm{d}\lambda,
\end{align*}
and using that $(\mathcal{Y}^{(u)};\; |u|=m)$ is a collection of\textrm{ i.i.d }copies of $\mathcal{Y}$ (see \eqref{LaplaceCSBP}), we obtain
\begin{align*}
    \E^{\mathcal{E}}\left[\Big(Z^{(v)}_{\infty}\mathcal{Y}^{(v)}_{b/Z^{(v)}_{\infty}}\Big)^2e^{-\lambda\sum_{|u|=m}Z^{(u)}_{\infty}\mathcal{Y}^{(u)}_{b/Z^{(u)}_{\infty}}}\right]&=\E^{\mathcal{E}}\left[\Big(Z^{(v)}_{\infty}\mathcal{Y}^{(v)}_{b/Z^{(v)}_{\infty}}\Big)^2e^{-\lambda Z^{(v)}_{\infty}\mathcal{Y}^{(v)}_{b/Z^{(v)}_{\infty}}}\right]\underset{u\not=v}{\prod_{|u|=m}}\E^{\mathcal{E}}\left[e^{-Z^{(u)}_{\infty}\mathcal{Y}^{(u)}_{b/Z^{(u)}_{\infty}}}\right] \\ & =\frac{(Z^{(v)}_{\infty})^2+2bc_0(1+\lambda bc_0)Z^{(v)}_{\infty}}{(1+\lambda bc_0)^4}e^{-\frac{\lambda}{1+\lambda bc_0}Z^{(v)}_{\infty}}\underset{u\not=v}{\prod_{|u|=m}}e^{-\frac{\lambda}{1+\lambda bc_0}Z^{(u)}_{\infty}} \\ & =\frac{(Z^{(v)}_{\infty})^2+2bc_0(1+\lambda bc_0)Z^{(v)}_{\infty}}{(1+\lambda bc_0)^4}e^{-\frac{\lambda}{1+\lambda bc_0}\sum_{|u|=m}Z^{(u)}_{\infty}},
\end{align*}
which yields what we wanted. We are left to prove \eqref{SommeVaut1}, that is
\begin{align*}
    \lim_{a\to 0}\;\lim_{n\to\infty}\P(|\VectCoord{\mathcal{X}}{1,n}_{\bn}\land\VectCoord{\mathcal{X}}{2,n}_{\bn}|\geq\an)+\lim_{m\to\infty}\;\lim_{n\to\infty}\P(|\VectCoord{\mathcal{X}}{1,n}_{\bn}\land\VectCoord{\mathcal{X}}{2,n}_{\bn}|<m)=1.
\end{align*}
For the first limit, recall that $N_{a,b}=\sum_{i=1}^{N_b}G^i_{a,b}$, where $N_b$ is, under $\P^{\mathcal{E}}$, a Poisson random variable with parameter $W_{\infty}/(bc_0)$, $(G^i_{a,b})_{i\geq 1}$ is a sequence of\textrm{ i.i.d }Geometric random variables on $\N^*$ with probability of success $1-a/b$. In particular, $G^i_{a,b}\to 1$ as $a\to0$, in law under $\P^{\mathcal{E}}$. By independence, $N_{a,b}\to N_b$ as $a\to 0$, in law under $\P^{\mathcal{E}}$ and 
\begin{align*}
    \lim_{a\to 0}\;\lim_{n\to\infty}\P(|\VectCoord{\mathcal{X}}{1,n}_{\bn}\land\VectCoord{\mathcal{X}}{2,n}_{\bn}|\geq\an)&=\lim_{a\to\infty}\E\left[\frac{\sum_{j=1}^{N_{a,b}}\big(\xi_{j}\big)^2}{\big(\sum_{j=1}^{N_{a,b}}\xi_{j}\big)^2}\Big|N_{a,b}>0\right] \\[0.7em] & =\E\left[\frac{\sum_{j=1}^{N_b}\big(\xi_{j}\big)^2}{\big(\sum_{j=1}^{N_b}\xi_{j}\big)^2}\Big|N_b>0\right].
\end{align*}
Let us now prove that 
\begin{align*}
    \lim_{m\to\infty}\E\left[\frac{\sum_{|u|=m}\Big(Z^{(u)}_{\infty}\mathcal{Y}^{(u)}_{b/Z^{(u)}_{\infty}}\Big)^2}{\Big(\sum_{|u|=m}Z^{(u)}_{\infty}\mathcal{Y}^{(u)}_{b/Z^{(u)}_{\infty}}\Big)^2}\Bigg|\max_{|u|=m}\mathcal{Y}^{(u)}_{b/Z^{(u)}_{\infty}}>0\right]=\E\left[\frac{\sum_{j=1}^{N_b}\big(\xi_{j}\big)^2}{\big(\sum_{j=1}^{N_b}\xi_{\bn}\big)^2}\Big|N_b>0\right].
\end{align*}
For that, we show that, in law, under $\P$
\begin{align}\label{CVlaw_m}
    \Big(\sum_{|u|=m}\Big(Z^{(u)}_{\infty}\mathcal{Y}^{(u)}_{b/Z^{(u)}_{\infty}}\Big)^2,\; \sum_{|u|=m}Z^{(u)}_{\infty}\mathcal{Y}^{(u)}_{b/Z^{(u)}_{\infty}}\Big)\underset{m\to\infty}{\longrightarrow}\Big(\sum_{j=1}^{N_{b}}\big(\bm{c}_{\infty}c_0b\xi_{j}\big)^2,\; \sum_{j=1}^{N_{b}}\bm{c}_{\infty}c_0b\xi_{j}\Big).
\end{align}
Let $\lambda_1,\lambda_2$ and $f(\mathfrak{z})=\lambda_1\mathfrak{z}^2+\lambda_2\mathfrak{z}$. As in the proof of \eqref{ConvLoiCoaRoot}
\begin{align*}
    \E\left[e^{-\sum_{|u|=m}f\Big(Z^{(u)}_{\infty}\mathcal{Y}^{(u)}_{b/Z^{(u)}_{\infty}}\Big)}\right]=\E\Big[\prod_{|u|=m}g\big(e^{-V(u)},f\big)\Big],
\end{align*}
where we recall that $g(t,f)=\E[\exp(-f(\bm{c}_{\infty}\mathcal{Z}_b(tW_{\infty})))]$. Note that under $\P^{\mathcal{E}}$, $\mathcal{Z}_b(tW_{\infty})$ is distributed as $c_0b\sum_{j=1}^{N_b(t)}\xi_j$ where $N_b(t)$ is a Poisson random variable with parameter $tW_{\infty}/(c_0b)$ and independent of $(\xi_j)_{j\geq 1}$. Hence, under $\P^{\mathcal{E}}$
\begin{align*}
    f\big(\bm{c}_{\infty}\mathcal{Z}_b(tW_{\infty})\big)\overset{\textrm{(law)}}{=} G_b(t)+\lambda_1(\bm{c}_{\infty}c_0b)^2F_b(t),
\end{align*}
where $G_b(t)=\lambda_1(\bm{c}_{\infty}c_0b)^2\sum_{j=1}^{N_b(t)}(\xi_{j})^2+\lambda_2\bm{c}_{\infty}c_0b\sum_{j=1}^{N_b(t)}\xi_{j}$ and $F_b(t):=\sum_{i\not=j=1}^{N_b(t)}\xi_i\xi_{j}$. Using that for any $m\geq 1$, $W_{\infty}$ is distributed as $\sum_{|u|=m}Z^{(u)}_{\infty}=e^{-V(u)}W_{\infty}^{(u)}$ under $\Pb$, one can see that
\begin{align*}
    \E\Big[e^{-\sum_{j=1}^{N_b}f(\xi_j)}\Big]=\Eb\Big[e^{\frac{W_{\infty}}{bc_0}\E^{\mathcal{E}}[1-e^{-f(\xi_1)}]}\Big]&=\Eb\Big[\prod_{|u|=m}e^{\frac{1}{bc_0}e^{-V(u)}W^{(u)}_{\infty}\E^{\mathcal{E}}[1-e^{-f(\xi_1)}]}\Big] \\ & =\Eb\Big[\prod_{|u|=m}e^{-G_b(e^{-V(u)})}\Big],
\end{align*}
so in order to get \eqref{CVlaw_m}, we only have to prove that 
\begin{align*}
    \lim_{m\to\infty}\E\Big[\sum_{|u|=m}F_b\big(e^{-V(u)}\big)\Big]=0,
\end{align*}
We have $\E^{\mathcal{E}}[F_b(t)]=\E^{\mathcal{E}}[N_b(t)(N_b(t)-1)]=(tW_{\infty})^2/(c_0b)^2$ thus giving $\E[F_b(t)]=t^2/(c_0b)^2$, where we have used that $\Eb[(W_{\infty})^2]=1$ since $\kappa>2$. Hence
\begin{align*}
    \E\Big[\sum_{|u|=m}F_b\big(e^{-V(u)}\big)\Big]=\frac{1}{(c_0b)^2}\Eb\Big[\sum_{|u|=m}e^{-2V(u)}\Big]=\frac{e^{m\psi(2)}}{(c_0b)^2}\underset{m\to\infty}{\longrightarrow}0.
\end{align*}
Let us finally recall that for all $m\geq 1$, $\P(\max_{|u|=m}\mathcal{Y}^{(u)}_{b/Z^{(u)}_{\infty}}>0)=\P(N_b>0)$ and the proof is completed.
\end{proof}

\subsection{Proofs of \eqref{RemSmallGenerations21}, \eqref{RemSmallGenerations22} and \eqref{RemarkGen1Excu}}\label{LastSection}

We first prove in this section the continuity between small generations and critical generations evoked in Remark \ref{RemSmallGenerations2}, that is equation \eqref{RemSmallGenerations21}
\begin{align*}
    \lim_{b\to 0}\;\underset{a<b}{\lim_{a\to 0}}\;\lim_{n\to\infty}\P\big(|\VectCoord{\mathcal{X}}{1,n}_{\bn}\land\VectCoord{\mathcal{X}}{2,n}_{\bn}|\geq\an\big)=0,
\end{align*}
and equation \eqref{RemSmallGenerations22}, that is for any $m\geq 1$
\begin{align*}
    \lim_{b\to 0}\;\lim_{n\to\infty}\P\big(|\VectCoord{\mathcal{X}}{1,n}_{\bn}\land\VectCoord{\mathcal{X}}{2,n}_{\bn}|<m\big)=\Eb^*\left[\frac{\mathcal{A}_{n,m}}{(W_{\infty})^2}\right],
\end{align*}
where we recall that $\mathcal{A}_{n,m}=\lim_{n\to\infty}\sum_{x\not= y;|x|=|y|=n}e^{-V(x)}e^{-V(y)}\un_{\{|x\land y|<m\}}$, see \eqref{RemSmallGenerations21} and \eqref{RemSmallGenerations22}. Indeed, we have proved (see the proof of Theorem \ref{GenealogyDiffCritic})
\begin{align*}
    \underset{a<b}{\lim_{a\to 0}}\;\lim_{n\to\infty}\P\big(|\VectCoord{\mathcal{X}}{1,n}_{\bn}\land\VectCoord{\mathcal{X}}{2,n}_{\bn}|\geq\an\big)=\E\left[\frac{\sum_{j=1}^{N_b}\big(\xi_{j}\big)^2}{\big(\sum_{j=1}^{N_b}\xi_{j}\big)^2}\Big|N_b>0\right],
\end{align*}
where we recall that $N_b$ is, under $\P^{\mathcal{E}}$, a Poisson random variable with parameter $W_{\infty}/(bc_0)$ and $(\xi_j)_{j\geq 1}$ is a sequence of $\textrm{ i.i.d }$ Exponential random variables with mean $1$, this sequence being independent of $N_b$. Recall that for any $\ell\geq 1$, $\E^{\mathcal{E}}[\sum_{j=1}^{\ell}(\xi_j)^2/(\sum_{j=1}^{\ell}\xi_j)^2]=2/(\ell+1)$ so
\begin{align*}
    \E\left[\frac{\sum_{j=1}^{N_b}\big(\xi_{j}\big)^2}{\big(\sum_{j=1}^{N_b}\xi_{j}\big)^2}\Big|N_b>0\right]=\frac{2}{\P(N_b>0)}\E\Big[\un_{\{N_b>0\}}\big(N_b+1\big)^{-1}\Big]=\frac{2\E\big[\big(N_b+1\big)^{-1}\big]}{\Eb\big[1-e^{-\frac{W_{\infty}}{bc_0}}\big]}-\frac{2\Eb\big[e^{-\frac{W_{\infty}}{bc_0}}\big]}{\Eb\big[1-e^{-\frac{W_{\infty}}{bc_0}}\big]}.
\end{align*}
Note that $\E[(N_b+1)^{-1}]=\Pb(W_{\infty}=0)+bc_0\Eb[\un_{\{W_{\infty}>0\}}(1-e^{-\frac{W_{\infty}}{bc_0}})/W_{\infty}]\to\Pb(W_{\infty}=0)>0$ as $b\to\infty$. Moreover, $\Eb[e^{-\frac{W_{\infty}}{bc_0}}]\to\Pb(W_{\infty}=0)$ as $b\to 0$, thus giving \eqref{RemSmallGenerations21}. \\
For \eqref{RemSmallGenerations22}, we have thanks to Theorem \ref{GenealogyDiffCritic}
\begin{align*}
    \lim_{n\to\infty}\P\big(|\VectCoord{\mathcal{X}}{1,n}_{\bn}\land\VectCoord{\mathcal{X}}{2,n}_{\bn}|<m\big) =1-\E\left[\frac{\sum_{|u|=m}\Big(Z^{(u)}_{\infty}\mathcal{Y}^{(u)}_{b/Z^{(u)}_{\infty}}\Big)^2}{\Big(\sum_{|u|=m}Z^{(u)}_{\infty}\mathcal{Y}^{(u)}_{b/Z^{(u)}_{\infty}}\Big)^2}\Bigg|\max_{|u|=m}\mathcal{Y}^{(u)}_{b/Z^{(u)}_{\infty}}>0\right].
\end{align*}
Recall that $(\mathcal{Y}_a;\; a\geq 0)$ has continuous paths and $\mathcal{Y}_0=1$ so for any continuous function $f:\R\to\R$, $\P^{\mathcal{E}}$-almost surely, $\lim_{b\to0}\sum_{|u|=m}f(Z^{(u)}_{\infty}\mathcal{Y}^{(u)}_{b/Z^{(u)}_{\infty}})=\sum_{|u|=m}f(Z^{(u)}_{\infty})$. In particular, for any $\lambda_1,\lambda_2\geq 0$
\begin{align*}
    &\E\Big[e^{-\lambda_1\sum_{|u|=m}\big(Z^{(u)}_{\infty}\mathcal{Y}^{(u)}_{b/Z^{(u)}_{\infty}}\big)^2}e^{-\lambda_2\sum_{|u|=m}Z^{(u)}_{\infty}\mathcal{Y}^{(u)}_{b/Z^{(u)}_{\infty}}}\Big|\max_{|u|=m}\mathcal{Y}^{(u)}_{b/Z^{(u)}_{\infty}}>0\Big] \\ & =\frac{\E\Big[e^{-\lambda_1\sum_{|u|=m}\big(Z^{(u)}_{\infty}\mathcal{Y}^{(u)}_{b/Z^{(u)}_{\infty}}\big)^2}e^{-\lambda_2\sum_{|u|=m}Z^{(u)}_{\infty}\mathcal{Y}^{(u)}_{b/Z^{(u)}_{\infty}}}\Big]}{\P\big(\max_{|u|=m}\mathcal{Y}^{(u)}_{b/Z^{(u)}_{\infty}}>0\big)}-\frac{\P\big(\max_{|u|=m}\mathcal{Y}^{(u)}_{b/Z^{(u)}_{\infty}}=0\big)}{\P\big(\max_{|u|=m}\mathcal{Y}^{(u)}_{b/Z^{(u)}_{\infty}}>0\big)} \\ & \underset{b\to0}{\longrightarrow}\frac{\Eb\Big[e^{-\lambda_1\sum_{|u|=m}(Z^{(u)}_{\infty})^2}e^{-\lambda_2\sum_{|u|=m}Z^{(u)}_{\infty}}\Big]}{\Pb(W_{\infty}>0)}-\frac{\Pb(W_{\infty}=0)}{\Pb(W_{\infty}>0)},
\end{align*}
where we have also used that $\P(\max_{|u|=m}\mathcal{Y}^{(u)}_{b/Z^{(u)}_{\infty}}>0)=\Eb[1-\prod_{|u|=m}e^{-Z^{(u)}_{\infty}/(bc_0)}]\to \Pb(W_{\infty}>0)$ as $b\to0$ since $W_{\infty}$ satisfies \eqref{SmoothingTransform}. Hence, noting that $(Z^{(u)}_{\infty};\; |u|=m)$ is a collection of\textrm{ i.i.d }copies of $(\lim_{n\to\infty}\sum_{|x|=n;\; x>u}e^{-V(x)};\; |u|=m)$, we have
\begin{align*}
    &\lim_{b\to0}\E\Big[e^{-\lambda_1\sum_{|u|=m}\big(Z^{(u)}_{\infty}\mathcal{Y}^{(u)}_{b/Z^{(u)}_{\infty}}\big)^2}e^{-\lambda_2\sum_{|u|=m}Z^{(u)}_{\infty}\mathcal{Y}^{(u)}_{b/Z^{(u)}_{\infty}}}\Big|\max_{|u|=m}\mathcal{Y}^{(u)}_{b/Z^{(u)}_{\infty}}>0\Big] \\ & =\Eb\Big[e^{-\lambda_1\lim_{n\to\infty}\sum_{|u|=m}\big(\sum_{|x|=n;\; x>u}e^{-V(x)}\big)^2}e^{-\lambda_2W_{\infty}}\Big|W_{\infty}>0\Big].
\end{align*}
Recalling that that $\Pb$-almost surely, $\{W_{\infty}>0\}=\{ \textrm{non-extinction of }\T\}$, we finally obtain
\begin{align*}
    \lim_{b\to\infty}\lim_{n\to\infty}\P\big(|\VectCoord{\mathcal{X}}{1,n}_{\bn}\land\VectCoord{\mathcal{X}}{2,n}_{\bn}|<m\big) =\Eb^*\Big[\frac{\mathcal{A}_{n,m}}{(W_{\infty})^2}\Big],
\end{align*}
which is exactly what we wanted. \\
We end this section with a brief proof of \eqref{RemarkGen1Excu}, that is
\begin{align*}
    \lim_{n\to\infty}\P\big(|\VectCoord{\mathcal{X}}{1,1}_{\bn}\land\VectCoord{\mathcal{X}}{2,1}_{\bn}|\geq\an\big)=\E\left[\frac{\sum_{j=1}^{G_{a,b}}(\xi_j)^2}{(\sum_{j=1}^{N_{a,b}}\xi_j)^2}\right]=\frac{2(b-a)}{a^2}\Big(b\log\Big(\frac{b}{b-a}\Big)-a\Big).
\end{align*}
Again, by definition, it is enough to show that 
\begin{align*}
    \lim_{n\to\infty}\E\Big[e^{-\mathbfcal{C}^{(n,1)}_{\bn,\an}(f)}\Big|R^{(1)}_{\bn}>0\Big]=\E\Big[e^{-\sum_{j=1}^{G_{a,b}}f(\bm{c}_{\infty}c_0(b-a)\xi_j)}\Big],
\end{align*}
where we recall that, $\mathbfcal{C}^{(n,p)}_{\bn,\an}(f)=\sum_{|u|=\an}f(\frac{1}{n}\sum_{|x|=\bn;x>u}\un_{\{N_x^{(p)}\geq 1\}})$ and $f(\mathfrak{z})=\lambda_1\mathfrak{z}^2+\lambda_2\mathfrak{z}$ with $\lambda_1,\lambda_2\geq 0$.  Indeed
\begin{align*}
    \E\Big[e^{-\mathbfcal{C}^{(n,1)}_{\bn,\an}(f)}\Big|R^{(1)}_{\bn}>0\Big]=\frac{\P\big(R^{(1)}_{\an}>0\big)}{\P\big(R^{(1)}_{\bn}>0\big)}\left(\E\Big[e^{-\mathbfcal{C}^{(n,1)}_{\bn,\an}(f)}\Big|R^{(1)}_{\an}>0\Big]-\P\big(R^{(1)}_{\bn}=0\big|R^{(1)}_{\an}>0\big)\right)
\end{align*}
As in the proof of \eqref{ConvDoubleRange2}, using the branching property (see Fact \ref{FactGWMulti}), Lemma \ref{ConvLoiCondi} and that $n\P(N^{(1)}_{\an}>0)\to1/(ac_0)$ as $n\to\infty$, on can prove that
\begin{align*}
    \lim_{n\to\infty}\E\Big[e^{-\mathbfcal{C}^{(n,1)}_{\bn,\an}(f)}\Big|R^{(1)}_{\an}>0\Big]=\lim_{n\to\infty}\E\left[e^{-\frac{(1-\phi_{a,b}(f))N^{(1)}_{\an}}{nc_0(b-a)}}\Bigg|N^{(1)}_{\an}>0\right]=\frac{b-a}{b-a\phi_{a,b}(f)},
\end{align*}
and also using that $R^{(1)}_{\bn}=0$ if and only if $\sum_{|x|=\bn;\;x>u}\un_{\{N^{(1)}_x\geq 1\}}=0$ for all $u\in\mathcal{R}^{(1)}$ such that $|u|=\an$, we have
\begin{align*}
    \lim_{n\to\infty}\P\big(R^{(1)}_{\bn}=0\big|R^{(1)}_{\an}>0\big)=\lim_{n\to\infty}\E\Big[\P\big(R^{(1)}_{\bn-\an}=0\big)^{N^{(1)}_{\an}}\big|N^{(1)}_{\an}>0\Big]=\frac{b-a}{b}.
\end{align*}
Hence
\begin{align*}
    \lim_{n\to\infty}\E\Big[e^{-\mathbfcal{C}^{(n,1)}_{\bn,\an}(f)}\Big|R^{(1)}_{\bn}>0\Big]=\frac{b}{a}\Big(\frac{b-a}{b-a\phi_{a,b}(f)}-\frac{b-a}{a}\Big)=\frac{(b-a)\phi_{a,b}(f)}{b-a\phi_{a,b}(f)},
\end{align*}
which is nothing but $\E[e^{-\sum_{j=1}^{G_{a,b}}f(\bm{c}_{\infty}c_0(b-a)\xi_j)}]$, thus giving the first equality in \eqref{RemarkGen1Excu}. For the second one, recall that for any $\ell\geq 1$, $\E[\sum_{j=1}^{\ell}(\xi_j)^2/(\sum_{j=1}^{\ell}\xi_j)^2]=2/(\ell+1)$ so
\begin{align*}
    \E\left[\frac{\sum_{j=1}^{G_{a,b}}(\xi_j)^2}{(\sum_{j=1}^{N_{a,b}}\xi_j)^2}\right]=\frac{2b(b-a)}{a^2}\sum_{\ell\geq 1}\frac{1}{\ell+1}\Big(\frac{a}{b}\Big)^{\ell+1}=\frac{2b(b-a)}{a^2}\int_0^{a/b}\frac{s}{1-s}\mathrm{d}s=\frac{2(b-a)}{a^2}\Big(b\log\Big(\frac{b}{b-a}\Big)-a\Big),
\end{align*}
and this ends the proof of \eqref{RemarkGen1Excu}.

\vspace{1cm}

\noindent\begin{merci}
The author is grateful to Pierre Andreoletti for interesting discussions in the early stages. The author is partially supported for this work by the NZ Royal Society Te Apārangi Marsden Fund (22-UOA-052) entitled "Genealogies of samples of individuals selected at random from stochastic populations: probabilistic structure and applications". The author also benefited during the preparation of this paper from the support of Hua Loo-Keng Center for Mathematical Sciences (AMSS, Chinese Academy of Sciences) and the National Natural Science Foundation of China (No. 12288201).
    
\end{merci}

\bibliographystyle{alpha}
\bibliography{thbiblio}

\end{document}